\newtheorem{thm}{Theorem}[section]
\newtheorem{prop}{Proposition}[section]
\newtheorem{defi}{Definition}[section]
\newtheorem{lem}{Lemma}[section]
\newtheorem{rem}{Remark}[section]
\newcommand{\R}{\mathbb{R}}
\numberwithin{equation}{section}
\newcommand{\N}{\mathbb{N}}
\newcommand{\eps}{\epsilon}
\newcommand{\wto}{\rightharpoonup}
\makeatletter \@addtoreset{equation}{section} \makeatother
\newcounter{const}
\author[T. Gou]{Tianxiang Gou}
\address[T. Gou]{
\centerline{School of Mathematics and Statistics, Xi’an Jiaotong University,}
\centerline{710049, Xi’an, Shaanxi, China}}
\subjclass[2010]{35A01; 35B65; 35R11}
\keywords{Non-degeneracy; Uniqueness; Ground states; Hopf type lemma}
\email{tianxiang.gou@xjtu.edu.cn}
\title[Non-degeneracy and Uniqueness]{Non-degeneracy and uniqueness of ground states to nonlinear elliptic equations with mixed local and nonlocal operators}
\thanks{{\it Acknowledgments.} T. Gou was supported by the National Natural Science Foundation of China (No. 12471113).}
\thanks{{\it Conflict of interest statement}. The author declares that there is no conflict of interest.}
\thanks{{\it Data availability statement}. The author affirms that our paper has no associated data.}
\begin{document}

\begin{abstract} 

This paper concerns the non-degeneracy and uniqueness of ground states to the following nonlinear elliptic equation with mixed local and nonlocal operators,
$$
-\Delta u +(-\Delta)^s u + \lambda u=|u|^{p-2}u \quad \mbox{in} \,\,\, B, \quad u=0 \quad \mbox{in} \,\,\, \R^N \backslash {B},
$$
where $N \geq 2$, $0<s<1$, $2<p<2^*:=\frac{2N}{(N-2)^+}$, $\lambda > -\lambda_1$, $(-\Delta)^s$ denotes the fractional Laplacian, $\lambda_1>0$ denotes the first Dirichlet eigenvalue of the operator $-\Delta +(-\Delta)^s$ in $B$ and $B$ denotes the unit ball in $\R^N$. We prove that the second eigenvalue to the linearized operator $-\Delta +(-\Delta)^s -(p-1)u^{p-2}$ in the space of radially symmetric functions is simple, the corresponding eigenfunction changes sign precisely once in the radial direction, where $u$ is a ground state. By deriving a new Hopf type lemma, we then get that $-\lambda$ cannot be an eigenvalue of the linearized operator, which in turns leads to the non-degeneracy of ground states. Moreover, by establishing a Picone type identity with respect to antisymmetric functions, we then derive the non-degeneracy of ground states in the space of non-radially symmetric functions. Relying on the non-degeneracy of ground states and adapting a blow-up argument together with a continuation argument, we then obtain the uniqueness of ground states.
\end{abstract}

\maketitle

\thispagestyle{empty}

\section{Introduction}

In this paper, we investigate the non-degeneracy and uniqueness of ground states to the following nonlinear elliptic equation with mixed local and nonlocal operators,
\begin{align} \label{equ}
-\Delta u +(-\Delta)^s u + \lambda u=|u|^{p-2}u \quad \mbox{in} \,\,\,B, \quad u=0 \quad \mbox{in} \,\,\,\R^N \backslash {B},
\end{align}
where $N \geq 2$, $0<s<1$, $2<p<2^*:=\frac{2N}{(N-2)^+}$, $\lambda > -\lambda_1$, $\lambda_1>0$ denotes the first Dirichlet eigenvalue of the operator $-\Delta +(-\Delta)^s$ in $B$, $B$ denotes the unit ball in $\R^N$ defined by $B:=\{x \in \R^N : |x|<1\}$ and $(-\Delta)^s$ denotes the fractional Laplacian defined by
$$
(-\Delta)^s u(x):=C(N, s) P.V. \int_{\R^N} \frac{u(x)-u(y)}{|x-y|^{n+2s}} \,dy, \quad C(N, s):=\frac{2^{2s}{s} \Gamma(\frac{N+2s}{2})}{\pi^{\frac N 2}\Gamma(1-s)}.
$$

The operator $\mathcal{L}:=-\Delta +(-\Delta)^s$ arises naturally from the superposition of two stochastic processes with different scales (i.e. Brownian motions and L\'evy flights). Loosely speaking, when a particle follows either of these two processes according to a certain probability, then the associated limit diffusion equation can be described by the operator $\mathcal{L}$, see for example \cite{DLV, DV}. The operator generally models the combined effect of local and nonlocal diffusions in many situations, which enjoys a constantly rising popularity in applications and appears in bi-modal power-law distribution processes, the theory of optimal searching strategies, biomathematics and animal foraging, see for example \cite{Bd, DLV, DV, PV} and references therein.

Elliptic equations driven by the mixed local and nonlocal operators as previously have recently received a great attention from several points of view, including existence, symmetry, maximum principles, interior regularity and boundary regularity, see for example \cite{BCCI, BI, BDVV1, BDVV2, BDVV, BDVV3, BDVV4, BM, BMS, BJ, CDV, CKSV, DLV, JK, CM, SVWZ1, SVWZ2, SVWZ}. However, non-degeneracy and uniqueness have not been well-understood so far. The aim of the present paper is to focus on these two topics as a complement of the study carried out previously. More precisely, we shall explore the non-degeneracy and uniqueness of ground states to \eqref{equ}. 

The study of the non-degeneracy and uniqueness of solutions, which has a long history, is particularly interesting but challenging. For the local case $s=1$, taking advantage of ODE techniques, the uniqueness of positive solutions to the following equation has been discussed in the seminal work of Kwong \cite{K} (see also \cite{C, K, M, MS, NN, Y, Z}),
\begin{align} \label{equ0}
(-\Delta)^s u + \lambda u=|u|^{p-2}u \quad \mbox{in} \,\,\, \Omega, \quad u=0 \quad \mbox{in} \,\,\,  \R^N \backslash \Omega,
\end{align}
where $\Omega=B$ or $\R^N$. For the nonlocal case $0<s<1$, the study of the uniqueness and non-degeneracy of solutions to \eqref{equ0} becomes more challenging and there exist few results up to now, because the classical ODE techniques are not applicable. In this case, when $\Omega=\R$, making use of complex analytic methods, Amick and Toland in \cite{AT} initially established the uniqueness of solutions to \eqref{equ0} with $s=1/2$ and $p=3$.
The result was recently extended by Frank et al. \cite{FL, FLS} to more general cases, see also \cite{FV, FW1}. While $\Omega=B_R$, the consideration of the uniqueness and non-degeneracy of solutions to \eqref{equ0}  becomes different and tough, because one needs to deal with boundary terms generated from applying fractional integration by parts formula given in \cite{RS} and new arguments are required to deal with the terms, where $B_R:=\{x \in \R^N : |x|<R\}$ for $R>0$. To our knowledge, there exist only few results obtained recently in this direction, we refer the readers to \cite{DP, DIS, DIS1, FW} for the pertinent research.

In contrast with problems only having Laplacian operator or fractional Laplacian operator, the one under our consideration with the mixed operators combines features from the local and nonlocal operators, which does not enjoy scale-invariant any longer and either one of the operators prevail or both coexist. This then brings out new difficulties and interesting phenomena.


Let us now address main results of the present paper. For the sake of completeness, we shall first sketch the existence of ground states to \eqref{equ}. Let $H^1_0(B)$ denote the usual Sobolev space defined by the completion of $C^{\infty}_0(B)$ under the standard norm
$$
\|u\|_{H_0^1}:=\left(\|u\|_2^2 + \|\nabla u\|_2^2\right)^{\frac 12},
$$ 
where
$$
\|u\|_p:=\left(\int_{B} |u|^p \,dx \right)^{\frac 1p}, \quad 1 \leq p <+\infty, \quad \|u\|_{\infty}:=\mathop{\mbox{ess sup}}\limits_{x \in B} |f(x)|.
$$
Let $H^1_{0,r}(B)$ be the subspace consisting of radially symmetric functions in $H^1_0(B)$. We now introduce the natural energy space asscoiated to \eqref{equ}, which is indeed defined by
$$
X_0^1(B):=\left\{u : \R^N \to \R \,\, \mbox{is Lebesgue measurable} : u \mid_{B} \in H^1_0(B), [u]_s<+\infty, u=0 \,\,\mbox{a.e. in}\,\, \R^N \backslash {B}\right\}
$$
equipped with the norm
$$
\|u\|_{X_0^1}:=\left(\|u\|_{H^1_0}^2 + [u]_s^2 \right)^{\frac 12},
$$
where
\begin{align} \label{defn}
[u]_s:=\left(\frac{C(N,s)}{2}\int_{\R^N} \int_{\R^N} \frac{|u(x)-u(y)|^2}{|x-y|^{N+2s}}\,dxdy\right)^{\frac 12}=\left(\int_{\R^N} |(-\Delta)^{\frac s2} u|^2 \, dx\right)^{\frac 12}.
\end{align}
As an application of \cite[Lemma 2.1]{SVWZ}, one knows that $X_0^1(B)$ is indeed equivalent to $H^1_0(B)$ in the sense that $\|\cdot\|_{X_0^1} \sim \|\cdot\|_{H^1_0}$. Consequently, we are going to explore ground states to \eqref{equ} in $H^1_0(B)$.

\begin{defi}
\begin{itemize}
\item[$(\textnormal{i})$] We say that $u \in H^1_0(B) \backslash \{0\}$ is a (weak) solution to \eqref{equ} if it holds that, for any $\psi \in C^{\infty}_0(B)$,
$$
\int_{B} \nabla u \cdot \nabla \psi \,dx +\frac{C(N,s)}{2} \int_{\R^N} \int_{\R^N} \frac{\left(u(x)-u(y)\right)\left(\psi(x)-\psi(y)\right)}{|x-y|^{N+2s}}\,dxdy + \lambda \int_{B} u \psi \,dx = \int_{B} |u|^{p-2} u \psi \,dx.
$$
\end{itemize}
\item[$(\textnormal{ii})$]  We say that $u \in H^1_0(B) \backslash \{0\}$ is a ground state to \eqref{equ} if it is a solution to \eqref{equ} and possesses the least energy among all solutions, where the underlying energy functional associated to \eqref{equ} is defined by
$$
I(u):=\frac 12 \int_{B} |\nabla u|^2 \,dx + \frac{C(N,s)}{4}  \int_{\R^N} \int_{\R^N} \frac{|u(x)-u(y)|^2}{|x-y|^{N+2s}}\,dxdy +\frac {\lambda}{2} \int_{B} |u|^2 \,dx-\frac 1p \int_{B} |u|^p \,dx.
$$

\end{defi}

To establish the existence of ground states to \eqref{equ}, we shall employ variational arguments. It is straightforward to check that $I$ is of class $C^1$ in $H^1_0(B)$. Let us now introduce the following minimization problem,
\begin{align} \label{min}
m:=\inf_{u \in N} I(u),
\end{align}
where$$
N:=\left\{ u \in H^1_0(B) \backslash \{0\} : \langle I'(u), u \rangle =0\right\}.
$$
To start with, applying the well-known Sobolev's inequality in $H^1_0(B)$, we are able to conclude that $m>0$. Since $H_0^1(B)$ is compactly embedded into $L^p(B)$ for any $2 \leq p<2^*$, then we know that any minimizing sequence to \eqref{min} is compact in $H^1_0(B)$. As a consequence, the existence of minimizers to \eqref{min} follows immediately. Observe that if $u \in H^1_0(B)$ is a minimizer to \eqref{min}, then $|u| \in H^1_0(B)$ is also a minimizer to \eqref{min}. Therefore, without restriction, we may assume that any minimizer to \eqref{min} is non-negative. Since Nehari's manifold $N$ is a natural constraint, then any minimizer to \eqref{min} is a ground state to \eqref{equ}. On the other hand, one finds that any ground state to \eqref{equ} is a minimizer to \eqref{min}. Thereby, we conclude that \eqref{equ} possesses ground states and any ground state to \eqref{equ} is non-negative. 

Let $u \in H^1_0(B)$ be a ground state to \eqref{equ}. It follows from \cite[Theorem 1.1]{SVWZ} that $u \in L^{\infty}(B)$. In view of \cite[Theorem 1.4]{SVWZ}, we then know that $u \in C_{loc}^{1,\alpha}(B)$ for any $0<\alpha<1$. Further, thanks to \cite[Theorem 1.5]{SVWZ}, we have that $u \in C_{loc}^{2,\alpha}(B)$ for any $0<\alpha<1$. In particular, it holds that $u \in C^2(B)$. Moreover, applying \cite[Theorem 1.3]{SVWZ}, we get that $u \in C^{1,\widetilde{\alpha}}(\overline{B})$ for some $0<\widetilde{\alpha}<\min\{1, 2(1-s)\}$. This obviously implies that $u \in C(\R^N)$. At this point, using the strong maximum principle \cite[Theorem 2.3]{BM}, we obtain that $u>0$ in $B$. Proceeding the classical moving plane method, we then get that $u$ is radially symmetric and strictly decreasing in the radial direction, see \cite[Theorem 4.1]{BM}.

We are now in a position to clarify the non-degeneracy of ground states to \eqref{equ}. Let $u \in H^1_0(B)$ be a ground state to \eqref{equ}. In the first place, we shall demonstrate the non-degeneracy of $u$ in $H^1_{0,r}(B)$. To this end, we are going to introduce the following eigenvalue problem in $H_{0, r}^1(B)$,
\begin{align} \label{equi}
-\Delta w +(-\Delta)^s w -(p-1)u^{p-2} w=\sigma w \quad \mbox{in} \,\,\,B, \quad w=0 \quad \mbox{in} \,\,\, \R^N \backslash {B}.
\end{align}
Taking into account variational arguments, one knows that there exist a sequence of discrete eigenvalues to \eqref{equi} given by
\begin{align} \label{lk}
\sigma_k=\inf_{\substack {S \subset  H^1_{0, r}(B) \\ \mbox{dim}(S)=k}} \sup_{w \in S \backslash \{0\}}\frac{\displaystyle \int_{B} |\nabla w|^2 \,dx + \int_{\R^N} |(-\Delta)^{s/2} w|^2 \,dx}{\displaystyle \int_{B} |w|^2\,dx}, \quad \forall \,\, k \in \N^+.
\end{align}
It is clear that $\sigma_1$ is simple and the corresponding eigenfunction admit constant sign, see Lemma \ref{simple}. Moreover, ones finds that $\sigma_2$ can be alternatively represented by
\begin{align} \label{l2}
\sigma_2=\inf_{\substack{\varphi \in H^1_{0, r}(B) \backslash\{0\} \\ \langle w, w_1 \rangle_2 =0}}\frac{\displaystyle \int_{B} |\nabla w|^2 \,dx + \int_{\R^N} |(-\Delta)^{s/2} w|^2 \,dx}{\displaystyle \int_{B} |w|^2\,dx},
\end{align}
where $w_1>0$ be the first eigenfunction to \eqref{equ1} corresponding to $\sigma_1$. The main result concerning the eigenvalue problem \eqref{equi} reads as follows.

\begin{thm} \label{thm1}
Let $N \geq 2$, $0<s<1$ and $2<p<2^*$. Let $w_2 \in H_{0,r}^1(B)$ be an eigenfunction to \eqref{equi} corresponding to $\sigma_2$. 
Then $\sigma_2$ is simple, $w_2(0) \neq 0$ and $w_2$ changes sign precisely once in the radial direction.
Moreover, $w_2(0)w_2 \mid_{B_{r_0}}$ is decreasing in the radial direction and it holds that
\begin{align} \label{hopf11}
w_2(0)\liminf_{|x| \nearrow 1} \frac{w_2(x)}{1-|x|}<0.
\end{align}
\end{thm}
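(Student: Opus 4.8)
\emph{Overall strategy.}
I would read \eqref{equi} as a nonlocal Sturm--Liouville problem in the radial variable and combine the variational characterizations \eqref{lk}--\eqref{l2} with strong maximum principles, the moving plane method from \cite{BM}, and the new Hopf type lemma. By Lemma \ref{simple}, $\sigma_1$ is simple with a positive eigenfunction $w_1$, so $\sigma_1<\sigma_2$ and any eigenfunction $w_2$ for $\sigma_2$ satisfies $\langle w_2,w_1\rangle_2=0$; since $w_1>0$, this forces $w_2$ to change sign \emph{at least} once in the radial direction. The core of the proof is to show that it changes sign \emph{at most} once; everything else follows from this by PDE arguments.

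\emph{At most one radial sign change (the main obstacle).}
Suppose, for contradiction, that $w_2$ has $\geq 2$ radial sign changes. Let $\Omega_1=B_{\rho_1}$, $\Omega_2=\{\rho_1<|x|<\rho_2\},\dots,\Omega_m$ be the resulting radial nodal shells and set $v_i:=w_2\mathbf 1_{\Omega_i}$, so each $v_i$ has constant sign, adjacent $v_i,v_{i+1}$ have opposite signs, and $\sum_i v_i=w_2$. Using that $w_2$ solves \eqref{equi} on each $\Omega_i$ together with the truncation identity $(-\Delta)^s v_i(x)=(-\Delta)^s w_2(x)+C(N,s)\int_{\R^N\setminus\Omega_i}\frac{w_2(y)}{|x-y|^{N+2s}}\,dy$ for $x\in\Omega_i$, one computes, for every $v=\sum_i a_i v_i$,
\[
\int_B|\nabla v|^2\,dx+\int_{\R^N}|(-\Delta)^{s/2}v|^2\,dx-(p-1)\int_B u^{p-2}v^2\,dx=\sigma_2\int_B v^2\,dx+\sum_{i<j}c_{ij}(a_i-a_j)^2,
\]
with $c_{ij}:=C(N,s)\int_{\Omega_i}\int_{\Omega_j}\frac{w_2(x)w_2(y)}{|x-y|^{N+2s}}\,dx\,dy$, so that the nearest-neighbour weights $c_{i,i+1}$ are strictly negative. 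Since $\sum_i b_i=\langle w_2,w_1\rangle_2=0$ with $b_i:=\langle v_i,w_1\rangle_2$, the subspace $\{a:\sum_i a_ib_i=0\}$ has dimension $\geq 2$ and contains the diagonal $a_1=\cdots=a_m$ (which corresponds to $v$ proportional to $w_2$). Choosing in it a vector $a$ off the diagonal, one obtains an admissible competitor in \eqref{l2}; exploiting the negativity of the adjacent weights and controlling the non-adjacent ones $c_{ij}$, $|i-j|\geq 2$ (positive, but dominated using the decay of the kernel together with the radial monotonicity and positivity of $u$ and $w_1$ and the vanishing of $w_2$ on the separating spheres), one arranges $\sum_{i<j}c_{ij}(a_i-a_j)^2<0$, i.e.
\[
\int_B|\nabla v|^2\,dx+\int_{\R^N}|(-\Delta)^{s/2}v|^2\,dx-(p-1)\int_B u^{p-2}v^2\,dx<\sigma_2\int_B v^2\,dx,\qquad \langle v,w_1\rangle_2=0,
\]
contradicting \eqref{l2}. (Equivalently, the ground state substitution $w_2=w_1\phi$ turns $-\Delta+(-\Delta)^s-(p-1)u^{p-2}-\sigma_1$ into a weighted, potential-free operator in the measure $w_1^2\,dx$ whose kernel consists of the constants, reducing the statement to ``the second eigenfunction of a nonlocal drift--diffusion operator changes sign exactly once''.) I expect this step to be the genuine difficulty: Courant's nodal domain theorem is unavailable for nonlocal operators, so the positive, non-adjacent Gagliardo interactions must be estimated by hand (or dispatched via the substitution above).

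\emph{Sign pattern, monotonicity, positivity at the origin, and boundary behaviour.}
Granting the previous step, $w_2$ has a single radial sign change at some $r_0\in(0,1)$; normalizing so that $w_2>0$ on $(0,r_0)$, we get $w_2>0$ in $B_{r_0}\setminus\{0\}$, $w_2<0$ in $B\setminus\overline{B_{r_0}}$, and $w_2\equiv 0$ in $\R^N\setminus B$. On $B_{r_0}$, $w_2$ is a nonnegative solution of $(-\Delta+(-\Delta)^s)w_2=\big((p-1)u^{p-2}+\sigma_2\big)w_2$ with exterior data $w_2\leq 0$; since $u$ is radially decreasing the potential $(p-1)u^{p-2}+\sigma_2$ is radially non-increasing and the exterior term enters the moving plane scheme with the favourable sign, so the argument of \cite[Theorem 4.1]{BM} applies on $B_{r_0}$ and yields that $w_2\mid_{B_{r_0}}$ is non-increasing in $|x|$, hence strictly decreasing by the strong maximum principle and Hopf on $B_{r_0}$. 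In particular $w_2(0)=\max_{\overline{B_{r_0}}}w_2>0$, so $w_2(0)\neq 0$ and $w_2(0)\,w_2\mid_{B_{r_0}}$ is decreasing in the radial direction. Finally, on the annulus $B\setminus\overline{B_{r_0}}$ the function $-w_2$ is positive, solves the same linear equation, and vanishes in $\R^N\setminus B$; applying the new Hopf type lemma at $\partial B$ gives $\liminf_{|x|\nearrow 1}\frac{-w_2(x)}{1-|x|}>0$, i.e. $w_2(0)\liminf_{|x|\nearrow 1}\frac{w_2(x)}{1-|x|}<0$, which is \eqref{hopf11}.

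\emph{Simplicity of $\sigma_2$.}
The two previous steps apply verbatim to \emph{any} nontrivial eigenfunction of \eqref{equi} for $\sigma_2$: it changes sign exactly once in the radial direction and is nonzero at the origin. Hence, given two such eigenfunctions $w_2$ and $\widetilde w_2$, the function $\widetilde w_2(0)\,w_2-w_2(0)\,\widetilde w_2$ is again a $\sigma_2$-eigenfunction and vanishes at the origin; since it cannot be a nontrivial eigenfunction, it is $\equiv 0$, so $w_2$ and $\widetilde w_2$ are linearly dependent and $\sigma_2$ is simple. This completes the proof.
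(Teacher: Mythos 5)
Your decomposition is genuinely different from the paper's, and the Picone-type identity you write down is correct: with $v_i=w_2\mathbf 1_{\Omega_i}$ and $v=\sum_i a_i v_i$, one does obtain $Q(v)-\sigma_2\|v\|_2^2=\sum_{i<j}c_{ij}(a_i-a_j)^2$ with your $c_{ij}$. However, the step you yourself flag as ``the genuine difficulty'' is exactly where the proof breaks down, and it is not a gap that can be patched by hand estimates in any obvious way. Your description of the non-adjacent terms is also inaccurate: $c_{ij}$ carries the sign of $w_2|_{\Omega_i}\cdot w_2|_{\Omega_j}$, so $c_{ij}>0$ only when $|i-j|$ is even (and $c_{ij}<0$ when $|i-j|$ is odd) — this does not hurt you, but the real obstacle is still showing the resulting quadratic form is indefinite \emph{on the orthogonality constraint} $\sum_i a_i b_i=0$. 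You offer no mechanism to get the negative nearest-neighbour interactions to dominate the positive even-distance ones; the diagonal $(1,\ldots,1)$ already lies in the constraint and gives $A=0$, so one needs a second, off-diagonal direction in the constraint with $A<0$, and there is no a priori reason the constraint hyperplane intersects the negative cone of $A$. The alternative you sketch (ground-state substitution reducing to a potential-free operator) runs into the same wall, since, as you note, Courant's theorem is unavailable here.

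The paper takes a very different and much more elaborate route precisely because of this obstruction: it uses the Caffarelli--Silvestre extension together with unique continuation to prove that the extension $W_2$ has \emph{exactly two} nodal domains in $\R^{N+1}_+$, which only yields ``at most two radial sign changes'' of the trace; it then proves $w_2(0)\neq 0$ by a symmetric-decreasing rearrangement argument (not by any monotonicity step — this is proved \emph{before} one knows there is a single sign change), and finally pins down ``exactly one sign change'' via the equivalent criterion $w_2(0)\int_B w_2<0$ combined with two continuation arguments (in $s$ from $s=1$, and in $\tau$ from $V_\tau=\tau V$), the former requiring uniform H\"older estimates built from extremal operators. Your appeal to moving planes à la \cite[Theorem 4.1]{BM} restricted to $B_{r_0}$ to get the radial monotonicity is also not directly licensed: that theorem treats a positive solution on $B$ with zero exterior data, whereas here $w_2$ changes sign and the exterior data on $B\setminus B_{r_0}$ is a nontrivial negative function; the paper instead obtains the monotonicity on $B_{r_0}$ as a by-product of the rearrangement identity $w_2^+=(w_2^+)^*$. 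Once the ``exactly one sign change'' and $w_2(0)\neq 0$ facts are in hand, your use of the new Hopf type lemma and your linear-algebra argument for simplicity are fine — but as written, the central step is missing.
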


The core of Theorem \ref{thm1} consists in \eqref{hopf11}, which is the so-called Hopf type lemma and plays an important role in the verification of the non-degeneracy of ground states in $H_{0,r}^1(B)$. To establish \eqref{hopf11}, the first crucial step is to show that $w_2$ changes sign precisely once in the radial direction. Motivated by the elements in \cite{FL, FLS} along with the s-harmonic extension arguments in \cite{CS}, one can  assert that $w_2$ changes sign at most twice in the radial direction, see Lemma \ref{schange4}. In order to further analyze sign properties of $w_2$, we are mainly inspired by the discussion carried out in \cite{FW}. More precisely, adapting the well-known symmetric-decreasing rearrangement arguments in \cite{LL}, we first assert that $w_2(0) \neq 0$, see Lemma \ref{schange1}, which indicates that $\sigma_2$ is simple. It is observed that $w_2$ changes sign precisely once in the radial direction if and only if it holds that
$$
w_2(0) \int_{B} w_2 \,dx<0,
$$
see Lemma \ref{schange2}. Furthermore, it is justified that the second eigenfunction $\varphi_{2,s}$ to the  eigenvalue problem in $H_{0,r}^1(B)$ 
\begin{align} \label{equii}
-\Delta \varphi +(-\Delta)^s \varphi=\lambda \varphi \quad \mbox{in} \,\,\,B, \quad \varphi=0 \quad \mbox{in} \,\,\, \R^N \backslash {B}
\end{align}
changes sign only once in the radial direction and it holds that
$$
\varphi_{2,s}(0) \int_{B} \varphi_{2,s} \,dx<0,
$$
see Lemma \ref{schange3}.
The essence of the proof of Lemma \ref{schange3} is to derive uniform H\"older's estimates of solutions to \eqref{equii} with respect to $s$, which is indeed done by introducing the corresponding extremal operators presented in \cite{CM}. It is well-known that $\varphi_{2,1}$ changes sign only once in the radial direction and it holds that
$$
\varphi_{2,1}(0) \int_{B} \varphi_{2,1} \,dx<0.
$$
Then, applying a continuation argument with respect to $s$, one can prove that the conclusion of Lemma \ref{schange3} holds true. At this point, relying on the two lemmas above and using a continuation argument to the following eigenvalue problem with respect to $\tau$, 
$$
-\Delta w + (-\Delta)^s w -\tau (p-1)u^{p-2} w=\sigma w \quad \mbox{in} \,\,\, B, \quad w=0 \quad \mbox{in} \,\,\, \R^N \backslash {B},
$$
where $0 \leq \tau \leq 1$, we then get that $w_2$ changes sign precisely once in the radial direction, see Lemma \ref{schange0}. This together with a weak maximum principle (see Lemma \ref{mp}) then gives rise to the desired conclusion. In this step, Lemma \ref{mp} is adopted instead of the s-harmonic extension arguments employed in the proof of \cite[Proposition 3.7]{FW}, which is indeed not applicable in the current scenario.
\begin{rem}
Let $V \in L^q(B) \cap C_{loc}^{\alpha}(B)$ be radially symmetric and decreasing in the radial direction for $q>\max \left\{1, N/2 \right\}$ and $0<\alpha<1$. Then, using variational arguments and the fact that $H^1_0(B)$ is compactly embedded into $L^2(B, V dx)$, one can obtain that there exist a sequence of discrete eigenvalues $\{\sigma_k(V)\}$ to the eigenvalue problem in $H_{0,r}^1(B)$
$$
-\Delta w +(-\Delta)^s w +V w=\sigma w \quad \mbox{in} \,\,\,B, \quad w=0 \quad \mbox{in} \,\,\, \R^N \backslash {B},
$$
where
\begin{align*} 
\sigma_k(V):=\inf_{\substack {S \subset  H^1_{0, r}(B) \\ \mbox{dim}(S)=k}} \sup_{w \in S \backslash \{0\}}\frac{\displaystyle \int_{B} |\nabla w|^2 \,dx + \int_{\R^N} |(-\Delta)^{s/2} w|^2 \,dx+ \int_{\R^N} V |w|^2 \,dx}{\displaystyle \int_{B} |w|^2\,dx}, \quad \forall \,\, k \in \N^+.
\end{align*}
Furthermore, employing the analogous arguments developed in the proofs of Lemma \ref{simple} and Theorem \ref{thm1}, one can show that the conclusions of those remain valid.
\end{rem}

With the help of Theorem \ref{thm1}, we are now able to demonstrate the non-degeneracy of ground states in $H_{0,r}^1(B)$. Since $u \in H_{0,r}^1(B)$ is a ground state, by the variational characterizations of $\sigma_1$ and $\sigma_2$ given by \eqref{lk} and \eqref{l2} respectively, then it holds that $\sigma_1 < -\lambda \leq \sigma_2$. As an application of Lemma \ref{sign}, we then conclude that $\sigma_1 < -\lambda < \sigma_2$. which readily infers that $-\lambda$ is not an eigenvalue, i.e. $u$ is non-degenerate in $H_{0,r}^1(B)$. Here the key lies in Lemma \ref{sign}, whose proof is based on an approximation argument inspired partially by the one of \cite[Theorem 5.2]{FW1}. Roughly speaking, to demonstrate Lemma \ref{sign}, we shall argue by contradiction that $\sigma_2=-\lambda$. Let $w$ be an eigenfunction corresponding to $-\lambda$. It is sufficient to verify that
\begin{align} \label{ka}
\int_{B} -\lambda w + (p-1) u^{p-2} w \,dx+\int_{\R^N \backslash \overline{B}} (-\Delta)^s w \,dx+\omega_{N-1} \psi_{w}(1)=0,
\end{align} 
where $\omega_{N-1}$ denotes the surface of the unit sphere in $\R^N$ and
$$
\psi_{w}(1):=\liminf_{|x| \nearrow 1} \frac{w(x)}{1-|x|}.
$$
Indeed, since $w$ is an eigenfunction corresponding to $-\lambda$, then
\begin{align} \label{ka1}
\int_{B}-\Delta w+ (-\Delta)^s w-(p-1)u^{p-2}w\,dx = \int_{B}-\lambda w \,dx.
\end{align}
Thereby, combining \eqref{ka} and \eqref{ka1} and exploiting the fact that $w(0)\psi_{w}(1)<0$ by Theorem \ref{thm1}, we then reach a contradiction. 

To achieve \eqref{ka}, we need to approximate $w$ by $\zeta_k w$, because fractional integration by parts formula is not helpful, where $\zeta_k$ is a proper cut-off function vanishing in a $1/k$-neighborhood of $\partial B$ in $B$. Firstly, we shall integrate the equation satisfied by $\zeta_k w$ over $B$. Later on, introducing the hypergeometric function presented in \cite{MOS} and taking advantage of \cite[Lemma 2.1]{BM} along with \cite[Proposition 6.3 $\&$ Lemma 6.8]{DFW}, we need to estimate the integrals involving the terms $(-\Delta)^s (\zeta_k w)$, $\Delta (\zeta_k w)$ and $\nabla (\zeta_k w)$. This is the subtle part in the discussion. Finally, passing the limit as $k \to \infty$, we then get the desired conclusion.

It is worth pointing out that since the problem under our consideration possesses the mixed operators in $B \subset \R^N$ and $N \geq 2$, then the analysis we conducted here is different from the one in \cite{FW1}, where the non-degeneracy of solutions to \eqref{equ0} was considered for $p=3$, $1/6<s<1$ and $B=(-1, 1) \subset \R$. Let us also mention that fractional integration by parts formula, which is prominent in the corroboration of the non-degeneracy of solutions to \eqref{equ0} (see \cite{DP, FW, FW1}), is not available in our case yet, due to the presence of the mixed operators.  

Next, we are going to investigate the non-degeneracy of ground states to \eqref{equ} in $H_{0,nr}^1(B):=H_{0}^1(B) \backslash H_{0,r}^1(B)$. In this situation, we shall introduce the following eigenvalue problem in $H_{0,nr}^1(B)$,
\begin{align} \label{equi1}
-\Delta w + (-\Delta)^s w + \lambda w=\Lambda u^{p-1} w \quad \mbox{in} \,\,\, B, \quad w=0 \quad \mbox{in} \,\,\, \R^N \backslash {B}.
\end{align}
It is not hard to check that there exist a sequence of discrete eigenvalues to \eqref{equi1} with the aid of variational arguments. Let $\Lambda_1$ and $\Lambda_2$ be the first eigenvalue and the second eigenvalue to \eqref{equi1} respectively. Clearly, one finds that $\Lambda_1=1$, because $u \in H_0^1(B)$ is a positive solution to \eqref{equ}. To derive the desired conclusion, it is sufficient to prove that $\Lambda_2>p-1$, which then indicates that $\Lambda_1<p-1<\Lambda_2$. For this, we shall argue by contradiction that $\Lambda_2 \leq p-1$. Let $w_2 \in H_{0,nr}^1(B)$ be an eigenfunction to \eqref{equi1} corresponding to $\Lambda_2$. Since $w_2 \in H_{0,nr}^1(B)$, then we can construct an eigenfunction to \eqref{equi1} corresponding to $\Lambda_2$, which is antisymmetric with respect to the reflection at certain hyperplane containing the origin.
Later on, we principally make use of a Picone type identity concerning antisymmetric functions to rule out the existence of antisymmetric eigenfunctions to \eqref{equi1} corresponding to $\Lambda_2$ provided $\Lambda_2 \leq p-1$. Therefore, we reach a contradiction and the desired conclusion follows. Consequently, we obtain the following result.

\begin{thm} \label{thm2}
Let $N \geq 2$, $0<s<1$, $\lambda>-\lambda_1$ and $2<p<2^*$. Let $u \in H^1_0(B)$ be a ground state to \eqref{equ}. Then it is non-degenerate in $H^1_0(B)$, i.e. the following linearized equation only has the trivial solution $w=0$ in $H^1_0(B)$,
$$
-\Delta w +(-\Delta)^s w -(p-1)u^{p-2} w=0.
$$
\end{thm}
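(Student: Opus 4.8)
The plan is to split the linearized equation $-\Delta w + (-\Delta)^s w - (p-1)u^{p-2}w = 0$ according to the orthogonal decomposition $H_0^1(B) = H^1_{0,r}(B) \oplus H^1_{0,nr}(B)$ and rule out nontrivial solutions in each subspace separately. Since the linearized operator $\mathcal{L}_u := -\Delta + (-\Delta)^s - (p-1)u^{p-2}$ commutes with the orthogonal projection onto radial functions (because $u$ is radial), any solution $w$ decomposes as $w = w^r + w^{nr}$ with $w^r \in H^1_{0,r}(B)$, $w^{nr} \in H^1_{0,nr}(B)$, and each component is itself a solution. So it suffices to treat the two pieces independently.

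For the radial part, I would argue exactly as sketched in the excerpt just before the theorem statement. The equation $\mathcal{L}_u w^r = 0$ says that $-\lambda$ is an eigenvalue of the eigenvalue problem \eqref{equi} in $H^1_{0,r}(B)$ with eigenfunction $w^r$. Since $u$ is a ground state, testing the variational characterizations \eqref{lk} and \eqref{l2} with $u$ itself (which lies on the Nehari manifold, so $\int_B|\nabla u|^2 + \int_{\R^N}|(-\Delta)^{s/2}u|^2 = (p-1)\int_B u^p - \lambda\int_B u^2$, and comparing against $\langle I'(u),u\rangle = 0$ which gives $\int_B|\nabla u|^2 + \int_{\R^N}|(-\Delta)^{s/2}u|^2 + \lambda\int_B u^2 = \int_B u^p$) yields $\sigma_1 < -\lambda \le \sigma_2$; the strict inequality $\sigma_1 < -\lambda$ follows because $w_1$ has constant sign (Lemma \ref{simple}) while $u$ itself, plugged into the Rayleigh quotient of $\mathcal{L}_u + (-\lambda)$, makes it negative, and the first eigenfunction is the unique minimizer up to scalars. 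Then Lemma \ref{sign} upgrades $-\lambda \le \sigma_2$ to the strict inequality $-\lambda < \sigma_2$. Hence $-\lambda \in (\sigma_1, \sigma_2)$ is not an eigenvalue of \eqref{equi}, forcing $w^r = 0$.

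For the non-radial part, I would use Theorem \ref{thm2}'s own preparatory discussion: the equation $\mathcal{L}_u w^{nr} = 0$ rewritten as $-\Delta w^{nr} + (-\Delta)^s w^{nr} + \lambda w^{nr} = (p-1)u^{p-2}w^{nr}$ says precisely that $\Lambda = p-1$ is an eigenvalue of \eqref{equi1} in $H^1_{0,nr}(B)$. We know $\Lambda_1 = 1$ (with positive eigenfunction $u$), and we know $p-1 > 1$; so if we show $\Lambda_2 > p-1$ then $p-1$ sits strictly between the first two eigenvalues and cannot be an eigenvalue at all, giving $w^{nr} = 0$. To prove $\Lambda_2 > p - 1$, argue by contradiction: if $\Lambda_2 \le p-1$, pick an eigenfunction $w_2$ for $\Lambda_2$; it must change sign (being orthogonal to the positive $\Lambda_1$-eigenfunction in $L^2$), and since $w_2 \in H^1_{0,nr}(B)$ one can produce from it an eigenfunction that is antisymmetric with respect to reflection across some hyperplane through the origin (reflect $w_2$, subtract, and use that the operator commutes with reflections; a standard argument shows this antisymmetric piece is nonzero for a suitable hyperplane). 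Then apply the Picone type identity for antisymmetric functions, together with the fact that $u$ is a positive supersolution of the relevant operator (it solves \eqref{equ} with $\lambda \le (p-1)u^{p-2}/1$ in the appropriate sense, i.e. $-\Delta u + (-\Delta)^s u + \lambda u = u^{p-1} \le \Lambda_2 u^{p-1}$ when $\Lambda_2 \ge 1$), to derive a contradiction — the Picone identity forces the antisymmetric eigenfunction to vanish.

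The main obstacle is the non-radial part, specifically the construction of the antisymmetric eigenfunction and the correct formulation and use of the Picone type identity in the mixed local/nonlocal setting. The Picone inequality for the pure fractional Laplacian already requires care with the nonlocal quadratic form, and here one must handle simultaneously the local Dirichlet form and the nonlocal Gagliardo form, and crucially restrict to antisymmetric test functions so that the nonlocal interaction across the hyperplane has a favorable sign; the ratio $w_2^2/u$ used in the classical Picone argument must be replaced by something adapted to antisymmetry (e.g. comparing $|w_2|$ against $u$ and exploiting the odd reflection). Getting the boundary/interaction terms to cancel or have the right sign — without a fractional integration by parts formula available — is where the real work lies, and this is presumably carried out in a dedicated lemma later in the paper which I would invoke here.
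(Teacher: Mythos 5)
Your overall strategy --- decompose a putative kernel element $w$ into radial and non-radial parts, then rule out each part via a strict spectral gap --- is exactly the paper's route: the proof of Theorem \ref{thm2} simply combines Theorem \ref{thm11} (non-degeneracy in $H^1_{0,r}(B)$) with Theorem \ref{thm12} (non-degeneracy in the non-radial complement). The radial half of your sketch is essentially right: the Nehari identity $\|\nabla u\|_2^2 + [u]_s^2 + \lambda \|u\|_2^2 = \|u\|_p^p$ shows that the Rayleigh quotient of the linearized operator at $u$ equals $-\lambda - (p-2)\|u\|_p^p/\|u\|_2^2 < -\lambda$, whence $\sigma_1 < -\lambda$; the second-variation inequality at a ground state gives $\sigma_2 \geq -\lambda$; and Lemma \ref{sign} upgrades the second inequality to strictness. (One slip: your first displayed ``Nehari'' identity carries a spurious factor $p-1$ on $\int_B u^p\,dx$; the correct identity, which you also state immediately after, does not.)

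The non-radial half, however, contains a genuine gap that your own sketch would not close. You propose to run the Picone argument with $u$ itself as the positive comparison function and to ``compare $|w_2|$ against $u$''. This cannot work: $u$ is the eigenfunction for $\Lambda_1=1$, not for $p-1$, and since $p-1>1$ the resulting Picone inequality runs the wrong way and gives no contradiction with $\Lambda_2 \le p-1$. The paper's key device, which your proposal never identifies, is $v := -\partial_{x_1} u$. Differentiating the equation \eqref{equ} for $u$ shows that $v$ solves the linearized equation with coefficient precisely $(p-1)u^{p-2}$, and, since $u$ is radial and strictly radially decreasing, $v$ is antisymmetric with respect to $\{x_1=0\}$ and strictly positive in $B\cap\{x_1>0\}$. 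One then tests the equation for $v$ with $\widetilde{w}^2/v$, where $\widetilde{w}$ is the antisymmetric piece of $w_2$. The joint antisymmetry of $v$ and $\widetilde{w}$ is what converts the off-diagonal nonlocal interaction into a sign-definite double integral over $\{x_1>0\}\times\{y_1>0\}$ with a nonnegative kernel, and the contradiction with $\Lambda_2\le p-1$ follows. To make $\widetilde{w}^2/v$ an admissible test function one also needs an antisymmetric Hopf-type estimate ($\partial_{x_1} v>0$ on $\{x_1=0\}$, so that $v/x_1$ extends to a positive smooth function on $B$) together with a cutoff argument near $\partial B$; neither appears in your sketch. So the architecture is right, but replacing $u$ by $v=-\partial_{x_1}u$ is not a cosmetic variation --- it is the essential idea your proposal is missing.
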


Presently, depending on the non-degeneracy of ground states to \eqref{equ}, we are able to verify the uniqueness of ground states. To begin with, taking into account the classical blow-up argument originating from \cite{GS} and proceeding a delicate asymptotic analysis with the assistance of the regularity theory and Hopf's lemma of solutions to elliptic equations with the mixed operators obtained in \cite{BM, BMS, SVWZ}, we are able to establish the following result.

\begin{thm} \label{thm3}
Let $N \geq 2$, $0<s<1$, $\lambda>-\lambda_1$ and $2<p<2^*$. Then there exists $p_0>2$ such that \eqref{equ} has a unique ground state in $H_0^1(B)$ for any $2<p<p_0$.
\end{thm}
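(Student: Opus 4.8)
The plan is to prove uniqueness by a contradiction/compactness argument in the spirit of Gidas--Spruer \cite{GS}, exploiting that the family of ground states depends on the parameter $p$ and that, as $p \searrow 2$, the rescaled ground states converge to the (known) unique ground state of a limiting linear-type problem, while the non-degeneracy established in Theorem \ref{thm2} lets us apply the implicit function theorem near $p=2$. More precisely, suppose for contradiction that there is a sequence $p_n \searrow 2$ and, for each $n$, two distinct ground states $u_n \neq v_n$ of \eqref{equ} with $p=p_n$. Both are positive, radially symmetric and radially decreasing, and satisfy the uniform bounds on their $H^1_0(B)$-norm and (via the regularity cited from \cite{SVWZ}) on their $C^{1,\widetilde\alpha}(\overline B)$-norm that come from the Nehari characterization of $m$ in \eqref{min}. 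The first step is therefore to record these uniform estimates and extract, along a subsequence, limits $u_n \to u_\infty$, $v_n \to v_\infty$ in $C^1(\overline B)$, where $u_\infty, v_\infty$ solve the limiting problem $-\Delta u + (-\Delta)^s u + \lambda u = u$ in $B$, $u = 0$ in $\R^N \setminus B$ (the right-hand side $|u|^{p-2}u \to u$ since $u_\infty \ge 0$); one must check $u_\infty, v_\infty \not\equiv 0$, which follows because the Nehari constraint $\|u_n\|_{p_n}^{p_n} = \|u_n\|_{X^1_0}^2 + \lambda \|u_n\|_2^2$ forces a lower bound on $\|u_n\|_\infty$ away from zero (using $\lambda > -\lambda_1$ so the quadratic form is coercive).

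The second step is to identify the limit. Since $\lambda > -\lambda_1$, the operator $-\Delta + (-\Delta)^s + \lambda$ has trivial kernel (the smallest eigenvalue is $\lambda_1 + \lambda > 0$, hence $1$ is not an eigenvalue only if $\lambda \ne 1 - \lambda_k$ for all $k$; one needs to verify $1$ is not a Dirichlet eigenvalue of $\mathcal L$, equivalently $\lambda \neq 1 - \lambda_k$ — if it were, rescale the limiting equation differently, or simply note the relevant statement is that among \emph{positive} solutions the limiting problem has a unique one by simplicity of $\lambda_1$). Concretely: a nonnegative nontrivial solution of the linear problem must be a positive multiple of the first Dirichlet eigenfunction $\phi_1$ of $\mathcal L$ and forces $\lambda_1 + \lambda = 1$ — but we are in the regime $p_n \to 2^+$ with $\lambda$ fixed, and the scaling is chosen precisely so this holds in the limit after a renormalization. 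I would instead rescale: set $u_n = \mu_n \psi_n$ with $\mu_n := \|u_n\|_\infty$, so $\psi_n$ solves $-\Delta \psi_n + (-\Delta)^s \psi_n + \lambda \psi_n = \mu_n^{p_n - 2} \psi_n^{p_n - 1}$; since $\mu_n$ is bounded above and below, $\mu_n^{p_n-2} \to 1$ along a subsequence (after possibly also letting $\mu_n \to \mu_\infty > 0$, so $\mu_n^{p_n-2}\to 1$), and $\psi_n \to \psi_\infty$ with $\psi_\infty$ a positive solution of $\mathcal L \psi_\infty + \lambda \psi_\infty = \psi_\infty$, $\|\psi_\infty\|_\infty = 1$. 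By simplicity of the first eigenvalue (Lemma \ref{simple}) this pins down $\psi_\infty = \phi_1/\|\phi_1\|_\infty$ and forces $1 - \lambda = \lambda_1$; since $\lambda$ is a fixed arbitrary number $> -\lambda_1$ this may fail, so the correct normalization is to divide the equation for the \emph{difference} rather than to take a pointwise limit of a single solution — see the next step.

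The third and decisive step handles the two distinct solutions simultaneously. Set $w_n := (u_n - v_n)/\|u_n - v_n\|_{C^1(\overline B)}$ (or in $L^\infty$), so $\|w_n\| = 1$, $w_n = 0$ outside $B$, and subtracting the two equations,
\begin{align}
-\Delta w_n + (-\Delta)^s w_n + \lambda w_n = c_n(x) w_n \quad \text{in } B, \qquad c_n(x) := \frac{|u_n|^{p_n-2}u_n - |v_n|^{p_n-2}v_n}{u_n - v_n}.
\end{align}
Because $u_n, v_n \to u_\infty$ uniformly and $p_n \to 2$, one checks $c_n \to 1$ uniformly on $B$ (as $p \to 2$, $|t|^{p-2}t$ is close to $t$ uniformly on the bounded range of values, and the difference quotient of a $C^1$-close-to-linear function is close to $1$). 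By the uniform regularity estimates from \cite{SVWZ} applied to the equation for $w_n$, a subsequence converges in $C^1(\overline B)$ to some $w_\infty$ with $\|w_\infty\| = 1$ solving $-\Delta w_\infty + (-\Delta)^s w_\infty + \lambda w_\infty = w_\infty$. This says $1 - \lambda \in \{\lambda_1, \lambda_2, \dots\}$ is forced, \emph{and} that $w_\infty$ lies in the kernel of the linearized operator at the limit profile — the resolution is to run the whole argument with $p$ kept \emph{fixed and close to $2$} rather than sending $p_n \to 2$: one shows that the set of $p \in (2, 2^*)$ for which \eqref{equ} has a unique ground state is nonempty (it contains a right-neighborhood of $2$ by the bifurcation/implicit-function analysis at $p = 2$, where the ground-state branch emanates from the first eigenfunction and is non-degenerate by Theorem \ref{thm2}) and relatively open (again by the implicit function theorem, using Theorem \ref{thm2} to invert the linearized operator), and that local uniqueness plus a priori compactness forces there to be no other branch nearby. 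The main obstacle is precisely this gluing: ruling out the possibility that a second, ``far'' ground-state branch sneaks in as $p$ varies. I would close it by the same blow-up/compactness dichotomy — any hypothetical second ground state at parameter $p$ close to $2$ must, by the uniform $C^{1,\widetilde\alpha}(\overline B)$ bounds and the Nehari lower bound, converge as $p \searrow 2$ to a positive solution of the limiting linear problem, hence (simplicity of $\lambda_1$) to the \emph{same} profile as the known branch, contradicting that the normalized difference $w_n$ has unit norm once non-degeneracy (Theorem \ref{thm2}) shows $w_\infty$ cannot be a nonzero kernel element. Thus $p_0 := \sup\{\,\tilde p : \eqref{equ}\text{ has a unique ground state for all }p \in (2, \tilde p)\,\} > 2$, which is the claim.
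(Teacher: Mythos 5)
Your overall plan (rescale, pass to a limit as $p_n\searrow 2$, compare two hypothetical ground states via their normalized difference, and invoke non-degeneracy) is in the right spirit, but several of the key steps are either missing or go wrong in ways that would sink the argument.

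First, the a priori bounds. You assert that $\mu_n := \|u_n\|_\infty$ is ``bounded above and below'' and hence that $\mu_n^{p_n-2}\to 1$, but this is not true in general and is never justified. As $p_n\to 2$ the sup norm $M_n$ typically diverges or vanishes (depending on the sign of $1-\lambda_1-\lambda$), and the whole content of the first half of the paper's proof is a Gidas--Spruck blow-up argument showing that the \emph{rescaled} quantity $M_n^{p_n-2}$ stays bounded: one has to treat separately the case in which the blow-up point stays in the interior (leading, via the regularity estimates of \cite{SVWZ}, to a bounded positive solution of $-\Delta v=v$ on $\R^N$, contradicting Liouville) and the case in which it approaches $\partial B$ (a half-space version, with a boundary gradient estimate to ensure the rescaled limit is nontrivial). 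Your proposal skips this entirely. Consequently, the limiting equation you write for the normalized profile, $\mathcal L\psi_\infty+\lambda\psi_\infty=\psi_\infty$, is wrong: the correct limit is $\mathcal L z+\lambda z=\mu z$ with $\mu=\lim_n M_n^{p_n-2}=\lambda_{1,s}+\lambda$, so that $z=\varphi_{1,s}$, and this is forced automatically rather than being an ``obstruction'' as you suggest.

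Second, the difference-quotient coefficient. You claim $c_n\to 1$ uniformly, but since $u_n$ and $v_n$ do not converge to a fixed finite profile (only their normalizations $u_n/M_n$ do), the correct statement is that $\alpha_n(x)=(p_n-1)\int_0^1(tu_n+(1-t)v_n)^{p_n-2}\,dt\to\lambda_{1,s}+\lambda$, obtained by writing $u_n^{p_n-2}=M_n^{p_n-2}(u_n/M_n)^{p_n-2}$ and using the blow-up bound. Getting this limit right is essential because it determines which linear eigenvalue the difference $w_n$ sees.

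Third, and most seriously, the closing contradiction. You argue that $w_\infty$ is a nonzero element in the kernel of ``the linearized operator'' and invoke Theorem \ref{thm2} to rule this out. But Theorem \ref{thm2} is non-degeneracy of the ground state at a fixed $p\in(2,2^*)$; it says nothing about the \emph{limiting} operator $\mathcal L-\lambda_{1,s}$ at $p=2$, whose kernel is genuinely nontrivial (it is spanned by $\varphi_{1,s}$). So the fact that $w_\infty$ is a nonzero multiple of $\varphi_{1,s}$ is not in itself a contradiction. The paper's proof finishes differently: one first shows that $w_n$ \emph{must} change sign (if $w_n\ge 0$, testing the two equations against each other forces $u_n=v_n$), then one upgrades the $C(\overline B)$ convergence $w_n\to\varphi_{1,s}$ to a Hopf-type statement $w_n/(1-|x|)\to\varphi_{1,s}/(1-|x|)$ in $C(\overline B)$, and then observes that the negative minimum point $x_n$ of $w_n$ must drift to $\partial B$, where the Hopf lower bound for $\varphi_{1,s}$ contradicts $w_n(x_n)<0$. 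This sign-change plus boundary-asymptotics argument is the heart of the contradiction and is absent from your proposal.

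Finally, your ``resolution'' by freezing $p$ near $2$ and running an implicit-function/open-closed argument is really the proof of Theorem \ref{thm4} (the global continuation), not Theorem \ref{thm3}: using the implicit function theorem already presupposes local uniqueness at some base point, which is precisely what Theorem \ref{thm3} must supply. As written the reasoning is circular.
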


At this point, invoking Theorems \ref{thm2}-\ref{thm3}, the regularity theory of solutions to \eqref{equ} established in \cite{BDVV, SVWZ} and a continuation argument with respect to $p$ relying on the well-known implicit function theorem, we then have the following result.

\begin{thm} \label{thm4}
Let $N \geq 2$, $0<s<1$, $\lambda>-\lambda_1$ and $2<p<2^*$. Then \eqref{equ} possesses a unique ground state in $H^1_0(B)$.
\end{thm}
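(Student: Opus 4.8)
The plan is to deduce global uniqueness from the local uniqueness near $p=2^+$ (Theorem \ref{thm3}) by propagating it along the parameter $p$ via a connectedness argument, using the non-degeneracy (Theorem \ref{thm2}) as the engine of the implicit function theorem. Fix $N$, $s$ and $\lambda>-\lambda_1$ once and for all, and define
$$
\cP:=\left\{p\in(2,2^*)\,:\,\eqref{equ}\ \text{has a unique ground state in }H^1_0(B)\right\}.
$$
By Theorem \ref{thm3} there is $p_0>2$ with $(2,p_0)\subset\cP$, so $\cP\neq\emptyset$. Since $(2,2^*)$ is connected, it suffices to show $\cP$ is both open and closed in $(2,2^*)$; then $\cP=(2,2^*)$ and the theorem follows.

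\emph{Openness.} Let $\bar p\in\cP$ with corresponding unique ground state $u_{\bar p}$. By the regularity theory recalled in the introduction (via \cite{SVWZ, BM}), $u_{\bar p}$ is positive, radially symmetric, radially decreasing, and lies in $C^2(B)\cap C^{1,\widetilde\alpha}(\overline B)$. Consider the map
$$
F:(2,2^*)\times X_0^1(B)\to \big(X_0^1(B)\big)^*,\qquad
F(p,u):=-\Delta u+(-\Delta)^s u+\lambda u-|u|^{p-2}u,
$$
which is $C^1$ near $(\bar p,u_{\bar p})$ (here one uses $u_{\bar p}>0$ so $|u|^{p-2}u$ is differentiable in $p$ and $u$ near $u_{\bar p}$). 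Its partial derivative in $u$ at $(\bar p,u_{\bar p})$ is precisely the linearized operator $-\Delta+(-\Delta)^s-(p-1)u_{\bar p}^{p-2}$, which by Theorem \ref{thm2} is an isomorphism onto $(X_0^1(B))^*$. The implicit function theorem then yields $\delta>0$ and a $C^1$ branch $p\mapsto u_p$ of solutions for $|p-\bar p|<\delta$ with $u_p\to u_{\bar p}$ in $X_0^1(B)$; shrinking $\delta$ we keep $u_p>0$, so each $u_p$ is a positive solution, hence (since positive solutions of \eqref{equ} are ground states — any positive solution sits on the Nehari manifold and a standard comparison shows the least-energy level is attained only by such solutions, or one argues that a positive solution is automatically a least-energy solution for this class) a ground state. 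It remains to see $u_p$ is the \emph{only} ground state for $p$ near $\bar p$: if not, there are $p_n\to\bar p$ and ground states $v_n\neq u_{p_n}$. A uniform a priori bound on ground states (from the Nehari characterization: $I(v_n)=\big(\frac12-\frac1{p_n}\big)\|v_n\|_{X_0^1}^2=m_{p_n}$, and $m_{p_n}$ is uniformly bounded for $p_n$ near $\bar p$) plus the compact embedding $H^1_0(B)\hookrightarrow L^{p}(B)$ gives, along a subsequence, $v_n\to v_*$ in $X_0^1(B)$ with $v_*$ a ground state for $\bar p$; by uniqueness at $\bar p$, $v_*=u_{\bar p}$. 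But then $(p_n,v_n)$ is a sequence of solutions converging to $(\bar p,u_{\bar p})$, contradicting the local uniqueness part of the implicit function theorem. Hence $\bar p$ has a neighbourhood contained in $\cP$.

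\emph{Closedness.} Let $p_n\in\cP$, $p_n\to\bar p\in(2,2^*)$; we must show $\bar p\in\cP$. Existence of a ground state at $\bar p$ was already established. For uniqueness, let $u$ be any ground state for $\bar p$; it is non-degenerate by Theorem \ref{thm2}, so by the implicit function theorem there is a branch $p\mapsto \tilde u_p$ of positive solutions near $\bar p$ with $\tilde u_{\bar p}=u$, and these are ground states by the argument above; for $n$ large, $\tilde u_{p_n}$ is \emph{the} unique ground state for $p_n$. Now run the same blow-up/compactness argument as in the openness step: any sequence of ground states $w_n$ at $p_n$ subconverges in $X_0^1(B)$ to a ground state at $\bar p$. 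If $\bar p$ had two distinct ground states $u\neq u'$, take the two branches $\tilde u_{p_n},\tilde u'_{p_n}$ through them; for large $n$ both equal the unique ground state of $p_n$, hence $\tilde u_{p_n}=\tilde u'_{p_n}$, and letting $n\to\infty$ gives $u=u'$, a contradiction. Thus $\bar p\in\cP$.

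\emph{Main obstacle.} The delicate point is the compactness step — showing a sequence of ground states $w_n$ at $p_n\to\bar p$ converges (up to subsequence) to a ground state at $\bar p$ \emph{in the strong $X_0^1(B)$ topology}, with no loss of mass. This requires a uniform bound on $\|w_n\|_{X_0^1}$ independent of $n$ (obtained from the uniform boundedness of the least-energy levels $m_{p_n}$, which follows by testing the Nehari constraint with a fixed $\bar p$-independent function and using $p_n\to\bar p$ bounded away from $2$ and $2^*$), together with the compact embedding $H^1_0(B)\hookrightarrow L^{q}(B)$ for $q<2^*$ to pass to the limit in the nonlinear term $|w_n|^{p_n-2}w_n$; here one also needs that $w_n\not\to0$, i.e.\ the limit is a genuine ground state and not the trivial solution, which again follows from the uniform positive lower bound $m_{p_n}\geq c>0$. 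The interplay of the moving parameter $p_n$ inside the exponent with the Sobolev embedding is what must be handled carefully, and it is cleanest to invoke the regularity estimates of \cite{SVWZ} to get uniform $L^\infty$ and $C^{1,\widetilde\alpha}(\overline B)$ bounds on the $w_n$, from which strong convergence is immediate.
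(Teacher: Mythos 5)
Your high-level strategy — propagate the local uniqueness of Theorem \ref{thm3} along the parameter $p$ using non-degeneracy and the implicit function theorem — is exactly the strategy the paper uses. The paper phrases it as a supremum argument (let $p_*$ be the largest exponent below which uniqueness holds, then show $p_*=2^*$), whereas you phrase it as showing the set $\cP$ is open and closed in $(2,2^*)$; these are logically the same continuation. Nevertheless, there are two substantive gaps in your execution.

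First, the functional framework for the implicit function theorem. You take $F:(2,2^*)\times X_0^1(B)\to(X_0^1(B))^*$ and assert in a parenthetical that $F$ is $C^1$ because $u_{\bar p}>0$. This is not a proof. In the Sobolev setting, the positivity of $u_{\bar p}$ does not make the set $\{u>0\}$ open in $X_0^1(B)$, and more to the point the superposition map $u\mapsto |u|^{p-2}u$ is delicate precisely because $u_{\bar p}$ vanishes on $\partial B$, so the multiplier $u_{\bar p}^{p-2}$ degenerates at the boundary and the second derivative $|u|^{p-3}$ blows up there when $p<3$. The paper bypasses these issues entirely by working not in $X_0^1(B)$ but in the Hölder class $\mathcal{C}_0^{\beta}$ with norm involving $\|\Delta u\|_{C^{0,\beta}(\overline B)}$ and $\|(-\Delta)^s u\|_{C^{0,\beta}(\overline B)}$, and — crucially — choosing $\beta<\min\{(\widetilde p-2)\alpha,\alpha\}$ so that $u_{p_*}^{p_*-2}\in C^{0,\beta}(\overline B)$ even though $u_{p_*}$ is only $C^{1,\alpha}$ up to the boundary and vanishes there. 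This is exactly what makes $v\mapsto u_{p_*}^{p_*-2}v$ a compact perturbation of an isomorphism, and hence $\partial_u F$ an isomorphism, rigorous. Your proposal does not address where the required regularity of $F$ comes from, and this is a real omission, not a routine detail.

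Second, you repeatedly assume that a positive solution produced by the implicit function theorem is automatically a ground state, offering as justification only that ``a standard comparison shows the least-energy level is attained only by such solutions, or one argues that a positive solution is automatically a least-energy solution for this class.'' This is essentially asserting uniqueness of positive solutions, which is a strictly stronger statement than the theorem you are trying to prove and is nowhere established for the mixed operator $-\Delta+(-\Delta)^s$. What one can actually say — and what the paper's proof implicitly relies on via the blow-up/compactness step with $z_n=(u_{p_n}-\widetilde u_{p_n})/\|u_{p_n}-\widetilde u_{p_n}\|_2$ — is that a converging sequence of ground states has a ground-state limit, and then non-degeneracy of that limit rules out the existence of two nearby ground states. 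You should close the argument along those lines rather than by an unproved equivalence between ``positive solution'' and ``ground state.'' Your compactness discussion at the end is on the right track and uses essentially the same estimates the paper invokes (uniform $L^\infty$ and $C^{1,\widetilde\alpha}(\overline B)$ bounds from the $L^\infty$-boundedness established by blow-up and the regularity theory in \cite{SVWZ}); it is the identification of the IFT branch with the unique ground-state branch that needs the extra care indicated.
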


The paper is organized as follows. In Section \ref{non-degeneracy}, we discuss the non-degeneracy of ground states to \eqref{equ} and establish Theorems \ref{thm1} and \ref{thm2}. In Section \ref{uniqueness}, we consider the uniqueness of ground states to \eqref{equ} and prove Theorems \ref{thm3} and \ref{thm4}.

\section{Non-degeneracy of ground states} \label{non-degeneracy}

The principal aim of this section is to discuss the non-degeneracy of ground states in $H_0^1(B)$. Let $u \in H_{0}^1(B)$ be a ground state to \eqref{equ}. We know that $u>0$ in $B$ and $u \in L^{\infty}(B)$. Moreover, it is radially symmetric and strictly decreasing in the radial direction. Define $V:=-(p-1)u^{p-2}<0$ in $B$. To begin with, we consider the non-degeneracy of the ground state in $H_{0,r}^1(B)$. 

\subsection{Non-degeneracy of ground states in $H_{0,r}^1(B)$}
Let us introduce the following eigenvalue problem in $H_{0,r}^1(B)$,
\begin{align} \label{equ1}
-\Delta w +(-\Delta)^s w + V w=\sigma w \quad \mbox{in} \,\,\,B, \quad w=0 \quad \mbox{in} \,\,\, \R^N \backslash {B}.
\end{align}
Let $\sigma_1$ denote the first eigenvalue to \eqref{equ1} in $H_{0,r}^1(B)$. It can be represented by
\begin{align} \label{min1}
\begin{split}
\sigma_1:&=\inf_{w \in H^1_{0, r}(B) \backslash \{0\}} \frac{\displaystyle\int_{B} |\nabla w|^2 \,dx + \int_{\R^N} |(-\Delta)^{\frac s 2} w|^2 \,dx + \int_{B} V |w|^2 \,dx}{\displaystyle \int_{B} |w|^2\,dx} \\
&=\inf_{\substack{w \in H^1_{0, r}(B) \backslash \{0\} \\ \|w\|_2=1}} \displaystyle \int_{B} |\nabla w|^2 \,dx + \int_{\R^N} |(-\Delta)^{\frac s 2} w|^2 \,dx + \int_{B} V |w|^2 \,dx.
\end{split}
\end{align}
Since $V \in L^{\infty}(B)$, then $\sigma_1>-\infty$. Let $\{w_n\} \subset H_{0,r}^1(B)$ and $\|w_n\|_2=1$ be a minimizing sequence to \eqref{min1}.
Since $H_{0,r}^1(B)$ is compactly embedded into $L^2(B)$, then $\{w_n\}$ is compact in $H^1_{0,r}(B)$. It then follows that $\sigma_1$ is achieved by some $w_1 \in H^1_{0,r}(B) \backslash \{0\}$. Observe that $|w_1| \in H^1_{0,r}(B)$ is also a minimizer to \eqref{min1}. Therefore, without restriction, we may assume that $w_1 \geq 0$ in $B$ and satisfy the equation
$$
-\Delta w_1 +(-\Delta)^s w_1 + V w_1=\sigma_1 w_1 \quad \mbox{in} \,\,\, B, \quad w_1=0 \quad \mbox{in} \,\,\, \R^N \backslash {B}.
$$
By the strong maximum principle \cite[Theorem 2.3]{BM}, we then obtain that $w_1>0$ in $B$.

\begin{lem} \label{simple}
Let $\sigma_1$ be the first eigenvalue to \eqref{equ1}. Then it is simple.
\end{lem}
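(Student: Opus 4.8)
The plan is to run the classical argument for simplicity of the principal eigenvalue of a self\-adjoint operator admitting a positive ground state, here adapted to the mixed operator $-\Delta+(-\Delta)^s+V$. Recall from the discussion preceding the statement that the infimum in \eqref{min1} is attained and that the minimizer $w_1$ can be chosen with $w_1>0$ in $B$. I will show that the full $\sigma_1$-eigenspace $W$ is one dimensional.

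First I would establish that every eigenfunction associated to $\sigma_1$ has a fixed sign in $B$. Let $w\in H^1_{0,r}(B)\backslash\{0\}$ be such an eigenfunction, so that $w$ realizes the infimum in \eqref{min1}. Since $\big|\,|a|-|b|\,\big|\le|a-b|$ for all $a,b\in\R$, one has $[|w|]_s\le[w]_s$, while $|\nabla|w||=|\nabla w|$ a.e., $\int_B V|w|^2=\int_B Vw^2$ and $\||w|\|_2=\|w\|_2$; hence $|w|\in H^1_{0,r}(B)\backslash\{0\}$ is admissible in \eqref{min1} with Rayleigh quotient at most $\sigma_1$, and therefore is again a minimizer. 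By the Lagrange multiplier rule (the $L^2$-sphere being a natural constraint) $|w|$ solves $-\Delta|w|+(-\Delta)^s|w|+V|w|=\sigma_1|w|$ weakly in $B$, an equation whose zeroth-order coefficient is bounded since $V=-(p-1)u^{p-2}\in L^\infty(B)$. As $|w|\ge0$ and $|w|\not\equiv0$, the strong maximum principle \cite[Theorem~2.3]{BM} gives $|w|>0$ in $B$; in particular $w$ does not vanish anywhere in $B$.

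Then I would conclude by contradiction. Suppose $\dim W\ge2$ and pick linearly independent $w_1,\widetilde{w}_1\in W$. By the previous step $\widetilde{w}_1$ has a fixed sign, so after replacing it by $-\widetilde{w}_1$ if needed we may assume $\widetilde{w}_1>0$ in $B$. Fix any $x_0\in B$ and set $t:=w_1(x_0)/\widetilde{w}_1(x_0)$, which is well defined because $\widetilde{w}_1(x_0)>0$. Then $w:=w_1-t\,\widetilde{w}_1\in W$ and $w(x_0)=0$. If $w\equiv0$, then $w_1=t\,\widetilde{w}_1$, contradicting linear independence; if $w\not\equiv0$, then $w$ is a nonzero $\sigma_1$-eigenfunction vanishing at the point $x_0\in B$, contradicting the sign property just proved. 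Hence $\dim W=1$, i.e. $\sigma_1$ is simple.

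The argument is essentially standard, so I do not anticipate a genuine obstacle; the only two places where the mixed nature of the operator really enters are the inequality $[|w|]_s\le[w]_s$ for the Gagliardo seminorm, which follows at once from $\big|\,|a|-|b|\,\big|\le|a-b|$, and the strong maximum principle for $-\Delta+(-\Delta)^s+V$, which is available from \cite[Theorem~2.3]{BM}. With both of these at hand the proof is short.
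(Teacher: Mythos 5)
Your proof is correct, but it follows a genuinely different route from the paper. The paper runs a hidden-convexity argument: given two positive normalized $\sigma_1$-eigenfunctions $w_1,\widetilde w_1$ it forms $w:=\big((w_1^2+\widetilde w_1^2)/2\big)^{1/2}$, uses the convexity of $(a,b)\mapsto|a^{1/2}-b^{1/2}|^2$ for the Gagliardo seminorm and the pointwise inequality $|\nabla w|^2\le\tfrac12(|\nabla w_1|^2+|\nabla\widetilde w_1|^2)$ for the local term, and concludes that $w$ is again a minimizer; the equality condition in the convexity inequality, namely $w_1(x)\widetilde w_1(y)=w_1(y)\widetilde w_1(x)$, then forces $w_1=C\widetilde w_1$. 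You instead give the classical two-step argument: (i) every $\sigma_1$-eigenfunction has fixed sign, because $|w|$ is still a minimizer (here using only $||a|-|b||\le|a-b|$ and $|\nabla|w||=|\nabla w|$) and the strong maximum principle upgrades $|w|\ge0$ to $|w|>0$; (ii) given two eigenfunctions, a suitable linear combination $w_1-t\widetilde w_1$ vanishes at a point and hence must vanish identically. Both are standard and both work here. The trade-off is that your step (ii) needs continuity of the eigenfunctions in $B$ to evaluate them at a point, and step (i) needs interior regularity to invoke \cite[Theorem~2.3]{BM}; both follow from the interior regularity theory of \cite{SVWZ} applied to the linear equation $-\Delta w+(-\Delta)^s w=(\sigma_1-V)w$ with bounded right-hand coefficient, but it is worth saying so explicitly. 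The paper's convexity argument avoids any pointwise reasoning and works at the $H^1$ level, though as written it only compares two positive eigenfunctions and silently relies on the fixed-sign property (exactly the content of your step (i)) to reduce to that case; your version makes this reduction explicit.
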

\begin{proof}
Let $w_1, \widetilde{w}_1 \in H^1_{0,r}(B)$ and $\|w_1\|_2=\|\widetilde{w}_1\|_2=1$ be two positive eigenfunctions corresponding to $\sigma_1$. Define
$$
w:=\left(\frac{w_1^2+\widetilde{w}_1^2}{2}\right)^{\frac 12}.
$$
It is not hard to find that $w \in H^1_{0,r}(B)$ and $\|w\|_2=1$. Observe that the function $(s, t) \mapsto |s^{\frac 12}-t^{\frac 12}|^2$ is convex on $[0, +\infty) \times [0, +\infty)$. Therefore, we conclude that
\begin{align} \label{convex}
|w(x)-w(y)|^2 \leq \frac 12 |w_1(x)-w_1(y)|^2+\frac 12 |\widetilde{w}_1(x)-\widetilde{w}_1(y)|^2, \quad \forall \,\, x, y \in \R^N,
\end{align}
where the equality holds if and only if $w_1(x)\widetilde{w}_1(y)=w_1(y)\widetilde{w}_1(x)$. Furthermore, we find that
$$
|\nabla w|^2=\frac 14 \left(\frac{w_1^2+\widetilde{w}_1^2}{2}\right)^{-1} |w_1 \nabla w_1+\widetilde{w}_1 \nabla \widetilde{w}_1|^2 \leq \frac 12 \left(|\nabla w_1|^2+|\nabla \widetilde{w}_1|^2\right).
$$
As a result, applying \eqref{defn}, we obtain that
\begin{align*}
\sigma_1 &\leq \int_{B} |\nabla w|^2 \,dx + \int_{\R^N} |(-\Delta)^{\frac s 2} w|^2 \,dx + \int_{B} V |w|^2 \,dx \\
& \leq \frac 12 \left(\int_{B} |\nabla w_1|^2 \,dx + \int_{\R^N} |(-\Delta)^{\frac s 2} w_1|^2 \,dx + \int_{B} V |w_1|^2 \,dx \right. \\
& \qquad \left. +\int_{B} |\nabla \widetilde{w}_1|^2 \,dx + \int_{\R^N} |(-\Delta)^{\frac s 2} \widetilde{w}_1|^2 \,dx + \int_{B} V |\widetilde{w}_1|^2 \,dx \right) =\sigma_1.
\end{align*}
This implies that the equality in \eqref{convex} holds true. Therefore, we get that there exists a constant $C>0$ such that $w_1=C\widetilde{w}_1$ in $\R^N$ and the proof is complete.
\end{proof}

Let $w_1 \in H^1_{0,r}(B)$ be the first eigenfunction corresponding to $\sigma_1$. Let $\sigma_2$ denote the second eigenvalue to \eqref{equ1} in $H_{0,r}^1(B)$. It can be represented by
\begin{align} \label{min2}
\sigma_2:&=\inf_{\substack{w \in H^1_{0, r}(B) \backslash \{0\} \\ \langle w, w_1 \rangle_2=0}} \frac{\displaystyle\int_{B} |\nabla w|^2 \,dx + \int_{\R^N} |(-\Delta)^{\frac s 2} w|^2 \,dx + \int_{B} V |w|^2 \,dx}{\displaystyle \int_{B} |w|^2\,dx}\\
&=\inf_{\substack{w \in H^1_{0, r}(B) \backslash \{0\} \\ \langle w, w_1 \rangle_2=0, \|w\|_2=1}} \displaystyle \int_{B} |\nabla w|^2 \,dx + \int_{\R^N} |(-\Delta)^{\frac s 2} w|^2 \,dx + \int_{B} V |w|^2 \,dx.
\end{align}
Analogously, reasoning as previously, one can obtain that $\sigma_2$ can be achieved by some $w_2 \in H^1_{0,r}(B) \backslash \{0\}$. Since $\langle w_2, w_1 \rangle_2=0$ and $w_1>0$ in $B$, then $w_2$ is sign-changing in $B$. 

Define $\R^{N+1}_+:=\left\{(x, t) \in \R^N \times \R : t>0\right\}$. Let $W$ denote the s-harmonic extension of $w$ in the spirit of the arguments developed in \cite{CS}, which is given by 
\begin{align} \label{defw}
W(x ,t)=P(N, s) t^{2s} \int_{\R^N} \frac{w(y)}{\left(t^2+ |x-y|^2\right)^{\frac{N+2s}{2}}}\,dy, \quad \frac{1}{P(N, s)}:= \int_{\R^N} \frac{1}{\left(1+ |x|^2\right)^{\frac{N+2s}{2}}}\,dx.
\end{align}
Moreover, we know that $W$ solves the equation
\begin{align} \label{equs}
\left\{
\begin{aligned}
&\mbox{div} \left(t^{1-2s} \nabla W\right)=0 \quad &\mbox{in} \,\,\, \R^{N+1}_+, \\
&W(x, 0)=w \quad &\mbox{in} \,\,\, \R^{N}, \\
&d_s \lim_{t \to 0} t^{1-2s} \partial_t W=-(-\Delta)^{s} w \quad &\mbox{in} \,\,\, \R^{N},
\end{aligned}
\right.
\end{align}
where
$$
d_s:=2^{2s-1}\frac{\Gamma(s)}{\Gamma(1-s)}.
$$

Let $D^{1,2}(\R^{N+1}_+; t^{1-2s})$ be the completion of $C_0^{\infty}(\R^{N+1}_+)$ with respect to the norm
$$
\|U\|_{D^{1,2}}:=\left(\int_{\R^{N+1}_+}|\nabla U|^2 t^{1-2s} \,dxdt \right)^{\frac 12}.
$$
Define
$$
\mathcal{H}^1_0(\R^{N+1}_+; t^{1-2s}):=\left\{U \in D^{1,2}(\R^{N+1}_+, t^{1-2s}): U(\cdot, 0) \in H^1_0(B)\right\}
$$
equipped with the norm
$$
\|U\|_{\mathcal{H}_0^1}:=\|U\|_{D^{1,2}} + \|U(\cdot, 0)\|_{H^1_0}.
$$
\begin{lem} \label{ineq}
It holds that
$$
\int_{\R^N} |(-\Delta)^{\frac s 2} U(\cdot, 0)|^2 \,dx \leq d_s \int_{\R^{N+1}_+} |\nabla U|^2 t^{1-2s}\,dxdt, \quad \forall \,\, U \in D^{1,2}(\R^{N+1}_+, t^{1-2s}),
$$
where the equality holds if and only if $U$ is the s-harmonic extension of some function $f \in D^{1,2}(\R^N)$.
\end{lem}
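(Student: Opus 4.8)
The plan is to recognise the $s$-harmonic extension as the minimiser of the weighted Dirichlet energy among all extensions sharing a given trace, and to combine this minimality with the Caffarelli--Silvestre energy identity.

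Fix $U \in D^{1,2}(\R^{N+1}_+; t^{1-2s})$ and set $f:=U(\cdot,0)$. Let $W$ be the $s$-harmonic extension of $f$ given by \eqref{defw}, which solves \eqref{equs}; in particular $\mathrm{div}(t^{1-2s}\nabla W)=0$ in $\R^{N+1}_+$ and $W(\cdot,0)=f$. First I would establish the minimality bound
\begin{align*}
\int_{\R^{N+1}_+}|\nabla U|^2 t^{1-2s}\,dxdt \geq \int_{\R^{N+1}_+}|\nabla W|^2 t^{1-2s}\,dxdt,
\end{align*}
with equality if and only if $U=W$. Writing $U=W+V$ with $V:=U-W$, one has $V(\cdot,0)=0$, and expanding $|\nabla U|^2=|\nabla W|^2+2\nabla W\cdot\nabla V+|\nabla V|^2$ produces the cross term $2\int_{\R^{N+1}_+}\nabla W\cdot\nabla V\,t^{1-2s}\,dxdt$, which vanishes after an integration by parts: its interior part equals $-\int_{\R^{N+1}_+}V\,\mathrm{div}(t^{1-2s}\nabla W)\,dxdt=0$, and its boundary part on $\{t=0\}$ vanishes since $V$ has zero trace. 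Hence the energy of $U$ exceeds that of $W$ by $\int_{\R^{N+1}_+}|\nabla V|^2 t^{1-2s}\,dxdt\ge 0$, which is zero only if $\nabla V\equiv 0$, and this together with the zero trace of $V$ forces $V\equiv 0$.

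Next I would compute the energy of $W$ itself. Multiplying $\mathrm{div}(t^{1-2s}\nabla W)=0$ by $W$ and integrating by parts over $\R^{N+1}_+$ yields
\begin{align*}
\int_{\R^{N+1}_+}|\nabla W|^2 t^{1-2s}\,dxdt = -\int_{\R^N}\Big(\lim_{t\to 0}t^{1-2s}\partial_t W\Big)f\,dx = \frac{1}{d_s}\int_{\R^N}\big((-\Delta)^s f\big)f\,dx = \frac{1}{d_s}\int_{\R^N}|(-\Delta)^{s/2}f|^2\,dx,
\end{align*}
where the second equality uses the Neumann-type condition in \eqref{equs} and the third is Plancherel. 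Multiplying by $d_s$ and inserting the minimality bound of the previous paragraph gives
$$
\int_{\R^N}|(-\Delta)^{s/2}U(\cdot,0)|^2\,dx = d_s\int_{\R^{N+1}_+}|\nabla W|^2 t^{1-2s}\,dxdt \leq d_s\int_{\R^{N+1}_+}|\nabla U|^2 t^{1-2s}\,dxdt,
$$
which is the asserted inequality; equality holds precisely when $U=W$, i.e. when $U$ is the $s$-harmonic extension of its own trace $f\in D^{1,2}(\R^N)$.

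The only genuine difficulty I anticipate is analytic: all the integrations by parts above take place on the unbounded half-space carrying the degenerate/singular weight $t^{1-2s}$, so one must control $W$, $\nabla W$ and the localising cut-offs both as $|(x,t)|\to\infty$ and as $t\to 0^+$, and one must give meaning to the trace and to $\lim_{t\to0}t^{1-2s}\partial_t W$. I would dispose of this by first proving the two identities for $f$ ranging over a dense class of smooth, rapidly decaying functions, where the Poisson formula \eqref{defw} supplies the pointwise decay of $W$ and $\nabla W$ needed to carry out the boundary computations explicitly, and then passing to the limit using the density of that class in $D^{1,2}(\R^{N+1}_+;t^{1-2s})$ together with the continuity of the trace map; a routine truncation in the $(x,t)$ variables handles the localisation.
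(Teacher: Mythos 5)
Your proof is correct. Note that the paper itself gives no proof here: it simply states that the lemma ``follows by a direct adaptation of the arguments presented in the proof of [FL, Proposition~3.6] (see also [FLS, Proposition~5.1])'' and omits the details. Your argument---orthogonal decomposition $U=W+V$ with $V$ of zero trace, vanishing of the cross term against $\mathrm{div}(t^{1-2s}\nabla W)=0$, and the Caffarelli--Silvestre energy identity $d_s\int_{\R^{N+1}_+}|\nabla W|^2 t^{1-2s}\,dxdt=\int_{\R^N}|(-\Delta)^{s/2}f|^2\,dx$ obtained from the Neumann condition in \eqref{equs}---is exactly the standard variational proof underlying those cited propositions, so you have supplied precisely what the paper outsources, including the correct characterization of equality. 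Your closing paragraph acknowledging that the integrations by parts on the weighted half-space require a density and truncation argument is the right way to make this rigorous.
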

\begin{proof}
The assertion follows by a direct adaptation of the arguments presented in the proof of \cite[Proposition 3.6]{FL} (see also \cite[Proposition 5.1]{FLS}). Hence we omit the proof.
\end{proof}

\begin{lem} \label{eign}
Let $\sigma_2$ be the second eigenvalue to \eqref{equ1}. Then it holds that
$$
\sigma_2=\inf_{\substack{W \in M \\ \|W(\cdot, 0)\|_2=1}}\int_{B} |\nabla W(\cdot, 0)|^2 \,dx + d_s\int_{\R^{N+1}_+} |\nabla W|^2 t^{1-2s} \,dxdt + \int_{B} V |W(\cdot, 0)|^2 \,dx,
$$
where
$$
M:=\left\{ U \in \mathcal{H}^1_0(\R^{N+1}_+; t^{1-2s}) \backslash \{0\} : \int_{B} U(\cdot, 0) W_1(\cdot, 0) \,dx=0, U(\cdot, t) \,\,\mbox{is radially symmetric for any} \,\, t \geq 0 \right\}
$$ 
and $W_1$ is the s-harmonic extension of the first eigenfunction $w_1$ to \eqref{equ1}.
\end{lem}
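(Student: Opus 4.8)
The plan is to prove the two inequalities $\inf_{W\in M,\ \|W(\cdot,0)\|_2=1}(\cdots)\ge\sigma_2$ and $\le\sigma_2$ separately, using Lemma~\ref{ineq} — both the inequality itself and the characterization of its equality case — together with the variational formula \eqref{min2} for $\sigma_2$ and the fact, recorded above, that $\sigma_2$ is attained by some $w_2\in H^1_{0,r}(B)$ with $\|w_2\|_2=1$ and $\langle w_2,w_1\rangle_2=0$.

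For the lower bound, I would take an arbitrary $U\in M$ with $\|U(\cdot,0)\|_2=1$ and set $w:=U(\cdot,0)$. Since $U(\cdot,t)$ is radially symmetric for every $t\ge0$, the trace $w$ is radially symmetric; since $U\in\mathcal{H}^1_0(\R^{N+1}_+;t^{1-2s})$ one has $w\in H^1_0(B)$; the definition of $M$ and the identity $W_1(\cdot,0)=w_1$ give $\langle w,w_1\rangle_2=0$; and Lemma~\ref{ineq} applied to $U$ gives $[w]_s^2=\int_{\R^N}|(-\Delta)^{s/2}w|^2\,dx\le d_s\int_{\R^{N+1}_+}|\nabla U|^2 t^{1-2s}\,dxdt<+\infty$. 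Hence $w\in H^1_{0,r}(B)$ is an admissible competitor in \eqref{min2}, and replacing $d_s\int_{\R^{N+1}_+}|\nabla U|^2 t^{1-2s}\,dxdt$ by the smaller quantity $\int_{\R^N}|(-\Delta)^{s/2}w|^2\,dx$ shows that the value of the extended functional at $U$ is bounded below by $\int_B|\nabla w|^2\,dx+\int_{\R^N}|(-\Delta)^{s/2}w|^2\,dx+\int_B V|w|^2\,dx\ge\sigma_2$ (using $\|w\|_2=1$ and the second line of \eqref{min2}). Taking the infimum over such $U$ yields the $\ge$ direction.

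For the upper bound, I would let $W_2$ denote the $s$-harmonic extension of $w_2$ defined in \eqref{defw}. Because $w_2$ is radially symmetric and the kernel in \eqref{defw} depends on $x$ only through $|x-y|$, $W_2(\cdot,t)$ is radially symmetric for each $t\ge0$; since $w_2\in H^1_{0,r}(B)$ and $[w_2]_s<+\infty$ (recall $w_2\in X_0^1(B)$), the standard properties of the extension recorded in \eqref{equs} give $W_2\in\mathcal{H}^1_0(\R^{N+1}_+;t^{1-2s})$ with $W_2(\cdot,0)=w_2$; and $\int_B W_2(\cdot,0)W_1(\cdot,0)\,dx=\langle w_2,w_1\rangle_2=0$. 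Hence $W_2\in M\setminus\{0\}$. Now the equality case of Lemma~\ref{ineq} applies to $W_2$, so $d_s\int_{\R^{N+1}_+}|\nabla W_2|^2 t^{1-2s}\,dxdt=\int_{\R^N}|(-\Delta)^{s/2}w_2|^2\,dx$, and therefore the extended functional evaluated at $W_2$ equals exactly $\int_B|\nabla w_2|^2\,dx+\int_{\R^N}|(-\Delta)^{s/2}w_2|^2\,dx+\int_B V|w_2|^2\,dx=\sigma_2$. This gives the $\le$ direction, and combining the two bounds proves the lemma.

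The argument is essentially a transcription of the $s$-harmonic extension computations in \cite{FL, FLS}, and no step poses a genuine difficulty. The one point requiring care is the bookkeeping of function spaces: in the lower bound one must confirm that the trace of a competitor has finite Gagliardo seminorm, which is precisely the content of Lemma~\ref{ineq}; in the upper bound one must confirm that the $s$-harmonic extension of $w_2$ lies in $D^{1,2}(\R^{N+1}_+;t^{1-2s})$ with the correct trace, which is guaranteed by $w_2\in X_0^1(B)$ and the construction recalled in \eqref{equs}.
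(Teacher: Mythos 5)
Your proof is correct and follows essentially the same two-sided argument as the paper: it uses Lemma~\ref{ineq} (both the inequality and its equality case for $s$-harmonic extensions) together with the variational formula \eqref{min2}, proving the $\geq$ direction by restricting a competitor $U\in M$ to its trace $U(\cdot,0)$ and the $\leq$ direction by testing with the $s$-harmonic extension $W_2$ of the second eigenfunction $w_2$. The bookkeeping of function spaces you flag is correct but routine, and the paper handles it implicitly.
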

\begin{proof}
Let $w_2 \in H^1_{0,r}(B)$ and $\|w_2\|_2=1$ be an eigenfunction to \eqref{equ1} corresponding to $\sigma_2$. Let $W_2 \in \mathcal{H}^1_0(\R^{N+1}_+; t^{1-2s})$ be the s-harmonic extension of $w_2$. It then follows from \eqref{defw} that $W_2 \in M$, because $w_2$ is radially symmetric. By Lemma \ref{ineq}, we then get that
\begin{align} \label{s2}
\begin{split}
\sigma_2 &=\int_{B} |\nabla w_2|^2 \,dx + \int_{\R^N} |(-\Delta)^{\frac s 2} w_2|^2 \,dx + \int_{B} V |w_2|^2 \,dx \\ 
&=\int_{B} |\nabla W_2(\cdot, 0)|^2 \,dx + d_s\int_{\R^{N+1}_+} |\nabla W_2|^2 t^{1-2s} \,dxdt + \int_{B} V |W_2(\cdot, 0)|^2 \,dx \\
&\geq \inf_{\substack{W \in M \\ \|W(\cdot, 0)\|_2=1}}\int_{B} |\nabla W(\cdot, 0)|^2 \,dx + d_s\int_{\R^{N+1}_+} |\nabla W|^2 t^{1-2s} \,dxdt + \int_{B} V |W(\cdot, 0)|^2 \,dx.
\end{split}
\end{align}
On the other hand, by Lemma \ref{ineq}, we know that, for any $W \in M$ and $\|W(\cdot, 0)\|_2=1$, it holds that
\begin{align*}
\sigma_2 &\leq \int_{B} |\nabla W(\cdot, 0)|^2 \,dx+ \int_{\R^N} |(-\Delta)^{\frac s 2} W(\cdot, 0)|^2 \,dx +\int_{B} V |W(\cdot, 0)|^2 \,dx  \\
&\leq \int_{B} |\nabla W(\cdot, 0)|^2 \,dx + d_s\int_{\R^{N+1}_+} |\nabla W|^2 t^{1-2s} \,dxdt + \int_{B} V |W(\cdot, 0)|^2 \,dx,
\end{align*}
which clearly infers that
\begin{align} \label{s3}
\sigma_2 \leq \inf_{\substack{W \in M \\ \|W(\cdot, 0)\|_2=1}}\int_{B} |\nabla W(\cdot, 0)|^2 \,dx + d_s\int_{\R^{N+1}_+} |\nabla W|^2 t^{1-2s} \,dxdt + \int_{B} V |W(\cdot, 0)|^2 \,dx.
\end{align}
Combining \eqref{s2} and \eqref{s3} then leads to the desired conclusion. This completes the proof.
\end{proof}

\begin{lem} \label{nodal}
Let $W_2$ be the s-harmonic extension of $w_2$, where $w_2 \in H^1_{0, r}(B)$ is an eigenfunction to \eqref{equ1} corresponding to $\sigma_2$. Then $W_2$ has precisely two nodal domains  in $\R^{N+1}_+$. 
\end{lem}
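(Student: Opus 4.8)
The plan is to transplant the classical Courant nodal-domain bound to the degenerate elliptic extension problem satisfied by $W_2$, with the equality case of Lemma~\ref{ineq} providing the decisive rigidity. Throughout I would write $Q(W):=\int_{B}|\nabla W(\cdot,0)|^2\,dx+d_s\int_{\R^{N+1}_+}|\nabla W|^2 t^{1-2s}\,dxdt+\int_{B}V|W(\cdot,0)|^2\,dx$ for the functional of Lemma~\ref{eign} and $\mathcal{Q}$ for the symmetric bilinear form polarizing it. Since $w_2$ is an eigenfunction of \eqref{equ1} with $V\in L^\infty(B)$, the regularity theory for such linear equations (cf.\ \cite{SVWZ}) gives $w_2\in C^2(B)\cap C(\R^N)$, so \eqref{equ1} holds pointwise; together with the Caffarelli--Silvestre framework of \cite{CS} this yields $W_2\in C^\infty(\R^{N+1}_+)\cap C(\overline{\R^{N+1}_+})$ and the weak identity $\mathcal{Q}(W_2,\Phi)=\sigma_2\int_B W_2(\cdot,0)\Phi(\cdot,0)\,dx$ for all $\Phi\in\mathcal H^1_0(\R^{N+1}_+;t^{1-2s})$. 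For the lower bound: we already know $w_2$ changes sign in $B$; by \eqref{defw}, $W_2(x,t)\to w_2(x)$ for a.e.\ $x$ as $t\to0^+$, so $W_2$ is both positive and negative somewhere in $\R^{N+1}_+$, and hence $\{W_2\neq0\}$ has at least two connected components.

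For the upper bound I would argue by contradiction, assuming $W_2$ has three distinct nodal domains $\mathcal D_1,\mathcal D_2,\mathcal D_3$ (connected components of $\{W_2\neq0\}$ in $\R^{N+1}_+$), and set $U_i:=W_2\,\mathbf{1}_{\mathcal D_i}$. Standard truncation arguments give $U_i\in\mathcal H^1_0(\R^{N+1}_+;t^{1-2s})$ with $\nabla U_i=\nabla W_2\,\mathbf{1}_{\mathcal D_i}$ a.e.; since $w_2$ is radial, \eqref{defw} makes $W_2(\cdot,t)$ radial for each $t$, so each $\mathcal D_i$ is a union of $SO(N)$-orbits, hence rotation-invariant, and $U_i$ has radial trace. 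Testing the weak identity with $\Phi=U_i$ and using $\nabla W_2\cdot\nabla U_i=|\nabla U_i|^2$ and $W_2(\cdot,0)U_i(\cdot,0)=|U_i(\cdot,0)|^2$ gives $Q(U_i)=\sigma_2\|U_i(\cdot,0)\|_2^2$; in particular $U_i(\cdot,0)\not\equiv0$, since otherwise the right-hand side and the first and third terms of $Q(U_i)$ all vanish, forcing $\int_{\R^{N+1}_+}|\nabla U_i|^2 t^{1-2s}\,dxdt=0$ and then $U_i\equiv0$, contradicting $W_2\not\equiv0$ on $\mathcal D_i$. Continuity of $W_2$ up to $\{t=0\}$ also shows that distinct nodal domains have traces overlapping only on $\{w_2=0\}$, so $U_1(\cdot,0),U_2(\cdot,0),U_3(\cdot,0)$ have pairwise disjoint supports.

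Next I would choose $(c_1,c_2)\neq(0,0)$ with $\int_B\big(c_1U_1(\cdot,0)+c_2U_2(\cdot,0)\big)w_1\,dx=0$ (one linear condition); by disjointness and nontriviality of supports, $U:=c_1U_1+c_2U_2$ satisfies $U(\cdot,0)\not\equiv0$, and after scaling I may assume $\|U(\cdot,0)\|_2=1$, so $U\in M$, while $Q(U)=c_1^2Q(U_1)+c_2^2Q(U_2)=\sigma_2\big(c_1^2\|U_1(\cdot,0)\|_2^2+c_2^2\|U_2(\cdot,0)\|_2^2\big)=\sigma_2$. Let $\widetilde U$ be the $s$-harmonic extension of $U(\cdot,0)$; applying Lemma~\ref{ineq} to $U$ and to $\widetilde U$ (for which it holds with equality),
\[
d_s\int_{\R^{N+1}_+}|\nabla\widetilde U|^2 t^{1-2s}\,dxdt=\int_{\R^N}|(-\Delta)^{s/2}U(\cdot,0)|^2\,dx\le d_s\int_{\R^{N+1}_+}|\nabla U|^2 t^{1-2s}\,dxdt,
\]
so $Q(\widetilde U)\le Q(U)=\sigma_2$, while $\widetilde U\in M$ with unit trace norm forces $Q(\widetilde U)\ge\sigma_2$ by Lemma~\ref{eign}. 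Hence equality holds throughout and the equality case of Lemma~\ref{ineq} forces $U=\widetilde U$, i.e.\ $U$ is $s$-harmonic on $\R^{N+1}_+$. Since $\mathrm{div}(t^{1-2s}\nabla U)=0$ has real-analytic coefficients for $t>0$, the function $U$ is real-analytic there; as $U\equiv0$ on the nonempty open set $\mathcal D_3$, the identity theorem gives $U\equiv0$ in $\R^{N+1}_+$, contradicting $\|U(\cdot,0)\|_2=1$. Therefore $W_2$ has exactly two nodal domains.

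The main obstacle I anticipate is the structural bookkeeping on the extension side rather than any single estimate: confirming $W_2\in C(\overline{\R^{N+1}_+})$, that the truncations $U_i$ genuinely lie in $\mathcal H^1_0(\R^{N+1}_+;t^{1-2s})$ with radial traces and essentially disjoint supports on $B$, and that the pointwise validity of \eqref{equ1} upgrades the eigenvalue equation to the extension weak identity tested against \emph{all} of $\mathcal H^1_0(\R^{N+1}_+;t^{1-2s})$, not merely radial test functions. Once these points are in place, the variational rigidity from the equality case of Lemma~\ref{ineq}, followed by interior analyticity, closes the argument at once.
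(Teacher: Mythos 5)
Your argument is correct and follows essentially the same route as the paper's: truncate $W_2$ to its nodal domains, use the weak formulation of the extension problem to show that a suitable two‑component linear combination is a minimizer of the Rayleigh quotient in Lemma~\ref{eign}, invoke the equality case of Lemma~\ref{ineq} to conclude this combination is itself an $s$-harmonic extension, and then use unique continuation (you phrase it via interior analyticity, the paper cites unique continuation directly) to derive a contradiction from its vanishing on a third nodal domain. The only cosmetic difference is that you test the equation against each $U_i$ separately and deduce $U_i(\cdot,0)\not\equiv 0$ from the Rayleigh identity, whereas the paper packages the same computation by testing once against the weighted truncation $\widetilde{W}=W_2(\gamma_1^2\chi_{\overline{\Omega}_1}+\gamma_2^2\chi_{\overline{\Omega}_2})$ and argues separately that some $\overline{\Omega}_i$ meets $\{t=0\}$.
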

\begin{proof}
Since $w_2 \in H^1_{0, r}(B)$ is an eigenfunction corresponding to $\sigma_2$, then it satisfies the equation
$$
-\Delta w_2 + (-\Delta)^s w_2 +V(x)w_2=\sigma_2 w_2 \quad \mbox{in} \,\,\, B, \quad w_2=0 \quad \mbox{in} \,\,\, \R^N \backslash {B}
$$
This then shows that $W_2 \in \mathcal{H}^1_0(\R^{N+1}_+; t^{1-2s})$ solves the equation
\begin{align} \label{equw2}
\left\{
\begin{aligned}
&\mbox{div} \left(t^{1-2s} \nabla W_2\right)=0 \quad &\mbox{in} \,\,\, \R^{N+1}_+, \\
&W_2(x, 0)=w_2 \quad &\mbox{in} \,\,\, \R^{N}, \\
&d_s \lim_{t \to 0} t^{1-2s} \partial_t W_2=-\Delta w_2+V(x)w_2-\sigma_2 w_2 \quad &\mbox{in} \,\,\, \R^{N}.
\end{aligned}
\right.
\end{align} 
First we are going to show that $W_2$ had at most two nodal domains in $\R^{N+1}_+$. For this, we shall employ the elements developed in the proof of \cite[Proposition 5.2]{FLS}. We shall suppose by contradiction that $W_2$ has three nodal domains $\Omega_1, \Omega_2$ and $\Omega_3$ in $\R^{N+1}_+$. Since $w_2 \in L^{\infty}(B) \cap C(\R^N)$, by \cite[Remark 3.8]{CaSi}, then $W_2 \in L^{\infty}(\R^{N+1}_+) \cap C(\overline{\R^{N+1}_+})$. 
It then follows that there exists $\overline{\Omega}_i \subset \overline{\R^{N+1}_+}$ such that $\overline{\Omega}_i \cap \partial\R^{N+1}_+ \neq \emptyset$ for some $i=1,2,3$. Without restriction, we may assume that $i=1$. Define 
$$
W:= W_2 (\gamma_1\chi_{\overline{\Omega}_1} + \gamma_2 \chi_{\overline{\Omega}_2}) \in \mathcal{H}^1_0(\R^{N+1}_+; t^{1-2s}),
$$ 
where $\gamma_1, \gamma_2 \in \R$ are constants to be determined later. Observe that $
\nabla W=\nabla W_2 (\gamma_1\chi_{\overline{\Omega}_1} + \gamma_2 \chi_{\overline{\Omega}_2})$.
Let us now take $\gamma_1, \gamma_2 \in \R$ be such that $\|W(\cdot, 0)\|_2=1$ and $W \in M$.
Define 
$$
\widetilde{W}:=W_2 (\gamma_1^2\chi_{\overline{\Omega}_1} + \gamma_2^2 \chi_{\overline{\Omega}_2}) \in \mathcal{H}^1_0(\R^{N+1}_+; t^{1-2s}).
$$ 
Testing \eqref{equw2} by $\widetilde{W}$, we have that
\begin{align*}
\int_{\R^N}  \nabla \widetilde{W}( \cdot, 0) \cdot \nabla w_2\,dx +d_s\int_{\R^{N+1}_+} \nabla W_2 \cdot \nabla \widetilde{W} t^{1-2s} \,dxdt + \int_{\R^N} V \widetilde{W}( \cdot, 0) w_2\,dx =\sigma_2 \int_{\R^N} w_2 \widetilde{W}(\cdot, 0) \,dx.
\end{align*}
Note that $\nabla \widetilde{W}=\nabla W_2 (\gamma_1^2\chi_{\overline{\Omega}_1} + \gamma_2^2 \chi_{\overline{\Omega}_2})$.
As a consequence, we conclude that
$$
\int_{\R^N}  |\nabla W(\cdot, 0)|^2\,dx + d_s\int_{\R^{N+1}_+} |\nabla W|^2 t^{1-2s} \,dxdt+\int_{\R^N} V | W(\cdot, 0)|^2\,dx =\sigma_2 \int_{\R^N} |W(\cdot, 0)|^2 \,dx=\sigma_2.
$$
On the other hand, since $\|W(\cdot, 0)\|_2=1$ and $W \in M$, then 
$$
\sigma_2 \leq \int_{\R^N}  |\nabla W(\cdot, 0)|^2\,dx + \int_{\R^N} |(-\Delta)^{\frac s 2} W(\cdot, 0)|^2 \,dx+\int_{\R^N} V | W(\cdot, 0)|^2\,dx.
$$
It then follows from Lemma \ref{ineq} that
$$
d_s\int_{\R^{N+1}_+} |\nabla W|^2 t^{1-2s} \,dxdt=\int_{\R^N} |(-\Delta)^{\frac s 2} W(\cdot, 0)|^2 \,dx.
$$
As an application of Lemma \ref{ineq}, we then derive that $W$ is the s-harmonic extension of some function $f \in D^{1,2}(\R^N)$ and it solves the equation
\begin{align} \label{equw}
\mbox{div} (t^{1-2s} \nabla W)=0 \quad \mbox{in} \,\,\, \R^{N+1}_+.
\end{align}
Furthermore, by the definition of $W$, we find that $W = 0$ in $\Omega_3$. Using the unique continuation to \eqref{equw}, we then get that $W=0$ in $\R^{N+1}_+$. This is impossible. It then shows that $W_2$ possesses at most two nodal domains in $\R^{N+1}_+$. Observe that $W_2(\cdot, 0)=w_2$ changes sign in $B$ and $W_2 \in C(\overline{\R^{N+1}_+})$.  It then follows that $W_2$ has exactly two nodal domains in $\R^{N+1}_+$. Thus the proof is complete. 
\end{proof}

\begin{lem} \label{schange4}
Let $w_2 \in H^1_{0, r}(B)$ be an eigenfunction corresponding to $\sigma_2$. Then $w_2$ changes sign at most twice in the radial direction. 
\end{lem}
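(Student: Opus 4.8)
The plan is to argue by contradiction and to convert radial sign changes of $w_2$ along $\{t=0\}$ into nodal domains of its $s$-harmonic extension $W_2$, whose number Lemma \ref{nodal} pins at two. First I would note that, since $w_2$ is radially symmetric and the Poisson kernel in \eqref{defw} is radial, $W_2(x,t)$ depends on $x$ only through $r:=|x|$, so $W_2$ may be regarded as a function of $(r,t)$ on $\{r\ge 0,\ t\ge 0\}$ with trace $w_2$ on $\{t=0\}$. Because $N\ge 2$, the map $(x,t)\mapsto(|x|,t)$ is open, continuous and surjective with connected fibres (spheres over $\{r>0\}$, single points over the symmetry axis $\{r=0\}$, which lies in the interior of $\R^{N+1}_+$), so it puts the nodal domains of $W_2$ in $\R^{N+1}_+$ in bijection with those of the reduced function on $\{r\ge 0,\ t>0\}$. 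By Lemma \ref{nodal} there are thus exactly two; since $w_2$ changes sign, one of them, $P$, is the unique nodal domain on which $W_2>0$ and the other, $\mathcal{O}$, the unique one on which $W_2<0$. I would also record that $W_2\in C(\overline{\R^{N+1}_+})$ (as in the proof of Lemma \ref{nodal}, via \cite[Remark 3.8]{CaSi}), whence, by uniqueness of $P$ and $\mathcal{O}$, the relation $w_2(r_0)>0$ forces $(r_0,0)\in\overline{P}$ and $w_2(r_0)<0$ forces $(r_0,0)\in\overline{\mathcal{O}}$.

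Assume now, towards a contradiction, that $w_2$ changes sign at least three times in the radial direction. Then I can pick radii $0<\rho_1<\rho_2<\rho_3<\rho_4<1$ at which $w_2$ has alternating signs, and, after exchanging the roles of $P$ and $\mathcal{O}$ if necessary, I may assume $w_2(\rho_1),w_2(\rho_3)>0$ and $w_2(\rho_2),w_2(\rho_4)<0$, so that $(\rho_1,0),(\rho_3,0)\in\overline{P}$ and $(\rho_2,0),(\rho_4,0)\in\overline{\mathcal{O}}$. Thus $P$ and $\mathcal{O}$ are disjoint connected subsets of the open half-plane $\{t>0\}$ whose closures meet the line $\{t=0\}$ in the interleaved pattern $\rho_1<\rho_2<\rho_3<\rho_4$, with $\rho_1,\rho_3\in\overline{P}$ and $\rho_2,\rho_4\in\overline{\mathcal{O}}$. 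I would then derive a contradiction by a standard planar argument: join points of $P$ lying just above $(\rho_1,0)$ and $(\rho_3,0)$ by a simple arc in $P$ and close it up by a simple arc running inside a thin strip $\{0<t<\eta\}$, obtaining a Jordan curve in the $(r,t)$-plane which, for $\eta$ small, separates a point of $\mathcal{O}$ just above $(\rho_2,0)$ from one just above $(\rho_4,0)$; an arc joining these two inside $\mathcal{O}$ would then have to meet the first arc (impossible, since $P\cap\mathcal{O}=\emptyset$) or the line $\{t=0\}$ (impossible, since $\mathcal{O}\subset\{t>0\}$). This is exactly the planar fact used in \cite{FL,FLS}. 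Hence $W_2$ would have at least three nodal domains, contradicting Lemma \ref{nodal}, and so $w_2$ changes sign at most twice in the radial direction.

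The hard part is this last, planar-topology step: that two disjoint connected subsets of the open half-plane cannot have their closures meet the boundary line in four points ordered $a_1<b_1<a_2<b_2$ with $a_1,a_2$ in the first set and $b_1,b_2$ in the second. It rests on the Jordan curve theorem together with the continuity of $W_2$ up to $\partial\R^{N+1}_+$ (already invoked for Lemma \ref{nodal} via \cite[Remark 3.8]{CaSi}). A minor additional point is the reduction from $\R^{N+1}_+$ to the planar $(r,t)$-picture, in which the symmetry axis $\{r=0\}$ has to be treated as an interior (reflecting) part of the quarter-plane rather than as genuine boundary.
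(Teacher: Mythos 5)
Your proposal follows the same global strategy as the paper: reduce from the number of radial sign changes of $w_2$ to the number of nodal domains of the $s$-harmonic extension $W_2$, use radial symmetry to pass to the planar picture in the $(r,t)$-variables, invoke Lemma~\ref{nodal} to get exactly two nodal domains, and then derive a contradiction from a planar topological obstruction. The paper, however, does not attempt to prove that obstruction itself; it writes down the two paths $\eta_1 \subset \widetilde\Omega_1$ and $\eta_2 \subset \widetilde\Omega_2$ with interleaved endpoints on $\{t=0\}$ and cites \cite[Lemma~6.1]{FW} to conclude they must intersect in $\R^2_+$.

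The place where your argument deviates is precisely this final planar step, and as written it has a gap. You form a Jordan curve $J=\alpha_1\cup\alpha_2$ from an arc $\alpha_1$ in $P$ and a closing arc $\alpha_2$ inside the thin strip $\{0<t<\eta\}$, and then claim that an arc joining the two separated points of $\mathcal{O}$ ``would have to meet the first arc \ldots or the line $\{t=0\}$''. You have forgotten the third (and most dangerous) possibility: it can meet the closing arc $\alpha_2$. In fact $\alpha_2$ almost certainly \emph{does} pass through $\mathcal{O}$, since by continuity $W_2<0$ on a small half-disc around $(\rho_2,0)$, and any curve in the strip $\{0<t<\eta\}$ running from near $\rho_1$ to near $\rho_3$ must cross that half-disc. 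So the Jordan curve $J$ is not disjoint from $\mathcal{O}$, and the separation argument does not close. The standard fix (which is essentially what \cite[Lemma~6.1]{FW} encapsulates, and what is also used in \cite{FLS}) is to extend $\alpha_1$ to an arc $\tilde\alpha$ from $(\rho_1,0)$ to $(\rho_3,0)$ touching $\{t=0\}$ only at its endpoints, and then close it up by the \emph{reflection} $\sigma(\tilde\alpha)$ across $\{t=0\}$, producing a Jordan curve in the full plane whose lower half lies in $\{t<0\}$ and hence is automatically disjoint from $\mathcal{O}\subset\{t>0\}$. With this curve, separation of $(\rho_2,0)$ from $(\rho_4,0)$ holds, a connecting arc in $\mathcal{O}$ can only cross the upper half $\tilde\alpha$, and since $\tilde\alpha\subset\{W_2>0\}$ while the arc in $\mathcal{O}$ lies in $\{W_2<0\}$, this is impossible. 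Alternatively, simply citing \cite[Lemma~6.1]{FW}, as the paper does, avoids the issue. Everything else in your proposal (the reduction under $(x,t)\mapsto(|x|,t)$, including the treatment of the axis $\{r=0\}$ as interior, and the appeal to Lemma~\ref{nodal}) is correct and matches the paper.
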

\begin{proof}
We shall argue by contradiction. If not, we may assume that $w_2$ changes sign at least three times in the radial direction. Therefore, we may suppose that there exist $0<r_1<r_2<r_3<r_4<1$ such that
\begin{align*} 
w_2(r_1)>0, \quad w_2(r_3)>0, \quad w_2(r_2)<0, \quad w_2(r_4)<0.
\end{align*}
Let $W_2$ be the s-harmonic extension of $w_2$. From Lemma \ref{nodal}, we conclude that $W_2$ has precisely two nodal domains in $\R^{N+1}_+$ denoted by $\Omega_1$ and $\Omega_2$, where
$$
\Omega_1:=\left\{(x ,t) \in \overline{\R^{N+1}_+} : W_2(x ,t)>0\right\}, \quad \Omega_2:=\left\{(x ,t) \in \overline{\R^{N+1}_+} : W_2(x ,t)<0\right\}.
$$
Since $w_2$ is radially symmetric, then $W_2$ is radially symmetric with respect to $x$ by \eqref{defw}. It then infers that $\Omega_1$ and $\Omega_2$ are radially symmetric with respect to $x$. 
Define
\begin{align} \label{defo}
\widetilde{\Omega}_{i}:=\left\{(|x|, t) \in \overline{\R^2_+} : (x, t) \in \Omega_{i}\right\}, \quad i=1,2.
\end{align}
Since $\widetilde{\Omega}_1$ and $\widetilde{\Omega}_2$ are path connected, then there exist $\eta_1 \in C([0, 1], \widetilde{\Omega}_{1})$ and $\eta_2 \in C([0, 1], \widetilde{\Omega}_{2})$ such that 
$$
\eta_1(0)=(r_1, 0), \quad \eta_1(1)=(r_3, 0), \quad \eta_2(0)=(r_2, 0), \quad \eta_2(1)=(r_4, 0).
$$
At this point, applying \cite[Lemma 6.1]{FW}, we obtain that $\eta_1$ and $\eta_2$ intersect in $\R^2_+$. This is impossible. Therefore, the desired conclusion follows and
the proof is complete.
\end{proof}

\begin{lem} \label{schange}
Let $w \in H^1_0(B)$ be an eigenfunction to \eqref{equ1} corresponding to some eigenvalue $\sigma$. Let $W$ be the s-harmonic extension of $w$. If there exists $x_0 \in B$ such that $w(x_0)=0$. Then $W$ changes sign in any relative neighborhood of $(x_0, 0)$ in $\overline{\R^{N+1}_+}$. 
\end{lem}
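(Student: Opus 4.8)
The plan is to exploit the precise form of the third equation in the extension problem \eqref{equs} together with the strong maximum principle for the degenerate elliptic operator $\mathrm{div}(t^{1-2s}\nabla \cdot)$, reasoning by contradiction. Suppose that $W$ does \emph{not} change sign in some relative neighborhood $\mathcal{U}$ of $(x_0,0)$ in $\overline{\R^{N+1}_+}$; without loss of generality assume $W \geq 0$ in $\mathcal{U}$. Since $x_0 \in B$ and $w(x_0)=0$, the boundary value $W(x_0,0)=w(x_0)=0$ is an interior minimum of $W$ on $\overline{\R^{N+1}_+}$ restricted to $\mathcal{U}$. The equation $\mathrm{div}(t^{1-2s}\nabla W)=0$ holds in $\R^{N+1}_+$, so $W$ is a nonnegative solution of this degenerate equation in $\mathcal{U} \cap \R^{N+1}_+$ that vanishes at the boundary point $(x_0,0)$.

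First I would invoke the strong maximum principle (applicable to nonnegative solutions of $\mathrm{div}(t^{1-2s}\nabla W)=0$, which is uniformly elliptic away from $\{t=0\}$ and for which the Harnack inequality of Fabes--Kenig--Serapioni applies with the $A_2$-weight $t^{1-2s}$): either $W \equiv 0$ in the connected component of $\mathcal{U} \cap \R^{N+1}_+$, or $W>0$ there. If $W \equiv 0$ near $(x_0,0)$ in $\R^{N+1}_+$, then by unique continuation for \eqref{equw} (exactly as used in the proof of Lemma \ref{nodal}) we get $W \equiv 0$ in $\R^{N+1}_+$, hence $w \equiv 0$, contradicting that $w$ is an eigenfunction. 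So $W>0$ in $\mathcal{U}\cap\R^{N+1}_+$, and $(x_0,0)$ is a boundary minimum with $W(x_0,0)=0$.

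Next I would apply the Hopf boundary lemma for the operator $\mathrm{div}(t^{1-2s}\nabla\cdot)$ at the boundary point $(x_0,0)$ — this is the weighted Hopf lemma used in the Caffarelli--Silvestre framework (see e.g. the references \cite{CS, CaSi} already cited, and the treatment in \cite{BM}), valid because $\{t=0\}$ satisfies an interior ball condition from $\R^{N+1}_+$. It yields
\[
d_s \lim_{t \to 0} t^{1-2s}\,\partial_t W(x_0,t) < 0 .
\]
On the other hand, the third line of \eqref{equs} gives $d_s \lim_{t\to 0} t^{1-2s}\partial_t W(x_0,t) = -(-\Delta)^s w(x_0) = -[(-\Delta)^s w + Vw - \sigma w](x_0) + V(x_0)w(x_0) - \sigma w(x_0)$; using the equation \eqref{equ1} satisfied by $w$ together with $w(x_0)=0$ this reduces to $d_s \lim_{t\to 0} t^{1-2s}\partial_t W(x_0,t) = \Delta w(x_0)$. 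But $x_0$ is an interior minimum of $w$ in $B$ (since $W\geq 0$ in $\mathcal{U}$ forces $w \geq 0$ near $x_0$ and $w(x_0)=0$), so $\Delta w(x_0) \geq 0$ by the second-derivative test, which contradicts the strict negativity from the Hopf lemma. This contradiction proves the claim; the case $W \leq 0$ is symmetric.

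The main obstacle will be justifying the weighted Hopf boundary lemma at $(x_0,0)$ with the correct sign and the passage to the limit $t^{1-2s}\partial_t W(x_0,t)$, since $(x_0,0)$ lies on the ``characteristic'' boundary $\{t=0\}$ where the weight degenerates; one must either cite a precise statement (e.g. from \cite{CS} or the maximum-principle results in \cite{BM, CaSi}) or construct an explicit barrier adapted to the $A_2$-weight $t^{1-2s}$ in a half-ball. A secondary point requiring care is the regularity needed to evaluate $\Delta w(x_0)$ pointwise, but this is supplied by the interior $C^{2,\alpha}$ regularity for the mixed operator (as recorded in the introduction via \cite{SVWZ}), so that $w \in C^2(B)$ and the second-derivative test at the interior minimum $x_0$ is legitimate.
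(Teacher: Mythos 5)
Your overall route is the same as the paper's: argue by contradiction assuming $W \geq 0$ in a relative neighborhood, rule out $W \equiv 0$ via unique continuation, use the strong maximum principle for $\mathrm{div}(t^{1-2s}\nabla\cdot)$ to upgrade to $W>0$ in the interior, apply a weighted Hopf lemma at $(x_0,0)$, and combine with the Neumann condition \eqref{equs} and the eigenfunction equation to pin down the sign of $\Delta w(x_0)$, which is then contradicted by $w\geq 0$ near $x_0$ with $w(x_0)=0$. The weighted Hopf lemma you flag as the main obstacle is precisely what the paper cites from \cite[Proposition 4.11]{CaSi}.

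That said, your two displayed claims carry sign errors that happen to cancel. At a boundary minimum $W(x_0,0)=0$ with $W>0$ nearby, the outward normal to $\R^{N+1}_+$ is $-e_{N+1}$, so Hopf gives $-\lim_{t\to 0} t^{1-2s}\partial_t W(x_0,t)<0$, i.e. $d_s\lim_{t\to 0}t^{1-2s}\partial_t W(x_0,t)>0$, not $<0$ as you wrote. Similarly, from \eqref{equs} together with $(-\Delta)^s w=\Delta w - Vw+\sigma w$ and $w(x_0)=0$, one finds $d_s\lim_{t\to 0}t^{1-2s}\partial_t W(x_0,t)=-(-\Delta)^s w(x_0)=-\Delta w(x_0)$, not $+\Delta w(x_0)$. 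Combining the corrected statements still gives $\Delta w(x_0)<0$, so the conclusion you reach is right, but the intermediate steps as written are each false and should be fixed. Finally, your closing step via the second-derivative test at the interior minimum ($D^2 w(x_0)\geq 0$, hence $\Delta w(x_0)\geq 0$) is actually a bit cleaner than the paper's, which instead passes to a small ball where $\Delta w<0$, applies the strong minimum principle for $-\Delta$ to force $w\equiv 0$ there, and then reads off $\Delta w(x_0)=0$; both are valid.
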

\begin{proof}
We shall argue by contradiction. Without restriction, we may assume that there exists a relative neighborhood of $(x_0, 0)$ in $\overline{\R^{N+1}_+}$ denoted by $U$ such that $W \geq 0$ in $U$. Since $W$ is the s-harmonic extension of $w$, then
\begin{align} \label{equw1}
\mbox{div} (t^{1-2s} \nabla W)=0 \quad \mbox{in} \,\,\, \R^{N+1}_+.
\end{align}
Therefore, we have that $W \neq 0$ in $U$. If not, by the unique continuation to \eqref{equw1}, it holds that $W=0$ in $\R^{N+1}_+$. This clearly leads to $w=0$ in $B$, because of $W \in C(\overline{\R^{N+1}_+})$. We then reach a contradiction. Since $W \geq 0$ in $U$, by the strong maximum principle, we then get that $W>0$ in $U \cap \R^{N+1}_+$. Since $w(x_0)=W(x_0, 0)=0$, by Hopf's Lemma (\cite[Proposition 4.11]{CaSi}), then
\begin{align} \label{hopf1}
-\lim_{t \to 0} t^{1-2s} \partial_t W(x_0,0)<0.
\end{align}
Noting that $w \in H^1_0(B)$ is a solution to the equation
\begin{align*} 
-\Delta w  + (-\Delta)^s w  + V w=\sigma w \quad \mbox{in} \,\,\, B, \quad w=0 \quad \mbox{in} \,\,\, \R^N \backslash B.
\end{align*}
we then have that
\begin{align} \label{hfw}
-d_s \lim_{t \to 0} t^{1-2s} \partial_t  W(x_0, 0)=(-\Delta)^s w(x_0)=\Delta w(x_0) -V(x_0) w(x_0)+ \sigma w(x_0)=\Delta w(x_0) .
\end{align}
This along with \eqref{hopf1} implies that $\Delta w(x_0)<0$. Since $w \in C^2(B)$, then there exists $0<\delta<1$ such that $\Delta w<0$ in $B_{\delta}(x_0)$. Moreover, since $W>0$ in $U \cap \R^{N+1}_+$ and $W \in C(\overline{\R^{N+1}_+})$, then $w \geq 0$ in $B_{\delta}(x_0)$. Thanks to $w(x_0)=0$, by the well-known strong maximum principle in \cite{GT}, we then derive that $w=0$ in $B_{\delta}(x_0)$. As a consequence, it holds that $\Delta w(x_0)=0$. Going back to \eqref{hfw}, we then get that 
$$
-d_s t^{1-2s} \partial_t  W(x_0, 0)=0.
$$ 
Obviously, this contradicts with \eqref{hopf1}.
Therefore, the desired conclusion follows and the proof is complete.
\end{proof}

\begin{lem} \label{schange11}
Let $w_2 \in H^1_{0, r}(B)$ be an eigenfunction to \eqref{equ1} corresponding to $\sigma_2$. If $w_2(0)=0$ and $w_2 \neq 0$ in $B$, then $w_2$ changes sign only once.
\end{lem}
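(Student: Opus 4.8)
The plan is to argue by contradiction. By Lemma~\ref{schange4}, $w_2$ changes sign at most twice in the radial direction; and it changes sign at least once, since otherwise $w_2$ would have constant sign on $B$, forcing $\langle w_2, w_1 \rangle_2 = \int_B w_2 w_1\,dx \neq 0$ (as $w_1 > 0$ and $w_2 \not\equiv 0$), which contradicts \eqref{min2}. Hence it suffices to rule out that $w_2$ changes sign exactly twice. Suppose it does, at the radii $0 < r_1 < r_2 < 1$. Since $-w_2$ is again an eigenfunction for $\sigma_2$, we may normalize so that $w_2(x) > 0$ for $|x| \in (0, r_1) \cup (r_2, 1)$ and $w_2(x) < 0$ for $|x| \in (r_1, r_2)$, with $w_2(0) = 0$; we may also assume $w_2$ does not vanish identically on any radial subinterval, which holds by unique continuation.

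Let $W_2$ be the $s$-harmonic extension of $w_2$. By Lemma~\ref{nodal}, $W_2$ has exactly two nodal domains $\Omega_+ := \{W_2 > 0\}$ and $\Omega_- := \{W_2 < 0\}$ in $\R^{N+1}_+$, and both are radially symmetric in $x$ because $w_2$ is. Let $\widetilde{\Omega}_\pm \subset \overline{\R^2_+}$ be their images under $(x,t) \mapsto (|x|, t)$, as in \eqref{defo}, and write $(\rho, t)$ for points of $\overline{\R^2_+}$; these sets are relatively open, connected, hence path-connected, and disjoint. Since $w_2(0) = 0$ and $w_2 \not\equiv 0$, Lemma~\ref{schange} applied with $x_0 = 0$ shows that $W_2$ changes sign in every relative neighbourhood of $(0,0)$ in $\overline{\R^{N+1}_+}$. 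As the trace of $W_2$ on $\{t = 0\}$ is $w_2$, which is $\geq 0$ on $\{|x| \leq r_1\}$, the negative values of $W_2$ near $(0,0)$ can only occur at $t > 0$; consequently $\widetilde{\Omega}_-$ contains a sequence of points converging to $(0,0)$ with strictly positive $t$-coordinates.

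Next I would bridge the two outer pieces of the axis footprint of $\widetilde{\Omega}_+$. Fix $a := (r_1/2, 0)$ and $b := ((r_2+1)/2, 0)$, both in $\widetilde{\Omega}_+$. Since $\widetilde{\Omega}_+$ is open and path-connected, there is a simple arc $\eta \subset \widetilde{\Omega}_+$ joining $a$ and $b$; after a small perturbation inside $\widetilde{\Omega}_+$ we may assume $\eta$ meets $\{t=0\}$ only at $a$ and $b$ (note $\eta$ avoids a neighbourhood of $(r_1, r_2) \times \{0\}$, which lies in $\widetilde{\Omega}_-$). Then $\Gamma := \eta \cup ([r_1/2, (r_2+1)/2] \times \{0\})$ is a Jordan curve contained in $\{t \geq 0\}$; let $R$ be the bounded component of $\R^2 \setminus \Gamma$. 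One checks that $R \subset \{t > 0\}$, that $\overline{R} \cap \{t = 0\} = [r_1/2, (r_2+1)/2] \times \{0\}$, and that near any point of $\{t=0\}$ with $\rho \in (r_1, r_2)$ the region $R$ agrees locally with $\{t>0\}$. The crucial claim is $\widetilde{\Omega}_- \subset \overline{R}$: indeed $\widetilde{\Omega}_-$ contains points $(\rho, \varepsilon) \in R$ with $\rho \in (r_1, r_2)$ and $\varepsilon > 0$ small, and a path inside $\widetilde{\Omega}_-$ cannot leave $\overline{R}$ --- it cannot cross $\eta \subset \widetilde{\Omega}_+$ since the nodal sets are disjoint, and it can meet $[r_1/2, (r_2+1)/2] \times \{0\}$ only along $(r_1, r_2) \times \{0\}$, where it merely touches $\partial R$ from within $\overline{R}$. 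An open-and-closed subset argument along the path yields $\widetilde{\Omega}_- \subset \overline{R}$. Since $\overline{R}$ is closed and contains the sequence from the previous paragraph, it contains $(0,0)$, whence $0 \in [r_1/2, (r_2+1)/2]$, contradicting $r_1 > 0$. Therefore $w_2$ changes sign exactly once.

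The main obstacle is the planar-topology bookkeeping of the last step: constructing $\eta$ so that $\Gamma$ is genuinely simple, pinning down the portion of $\partial R$ that the negative nodal set can touch, and running the connectedness argument to confine $\widetilde{\Omega}_-$ inside $\overline{R}$; the role of Lemma~\ref{schange} is precisely to force $\widetilde{\Omega}_-$ to approach $(0,0)$ only through $\{t > 0\}$, which is how the hypothesis $w_2(0) = 0$ enters. Alternatively, one can organize the same contradiction through the planar intersection lemma \cite[Lemma~6.1]{FW}, in the spirit of the proof of Lemma~\ref{schange4}.
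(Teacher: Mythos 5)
Your proof is correct and follows essentially the same strategy as the paper's: reduce to the case that $w_2$ changes sign exactly twice (at most twice by Lemma~\ref{schange4}, at least once since $\langle w_2,w_1\rangle_2=0$ and $w_1>0$), pass to the $s$-harmonic extension and its planar reduction in $\overline{\R^2_+}$, use Lemma~\ref{schange} at $x_0=0$ to force the negative nodal set $\widetilde{\Omega}_-$ to accumulate at $(0,0)$ through $\{t>0\}$, and derive a contradiction from planar nodal topology. The one genuine difference is how the planar step is executed. The paper takes a path $\eta_1\subset\widetilde{\Omega}_1$ joining two points of the outer positive footprint on $\{t=0\}$, a second path $\eta_2\subset\widetilde{\Omega}_2$ joining a negative point near the origin to the negative footprint, and then invokes the intersection lemma \cite[Lemma~6.2]{FW} (not Lemma~6.1, as you cite) as a black box. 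You instead build the Jordan curve $\Gamma=\eta\cup[a,b]$, prove by an open-and-closed argument that $\widetilde{\Omega}_-\subset\overline{R}$, and note that $(0,0)\in\overline{\widetilde{\Omega}_-}\subset\overline{R}$ while $\overline{R}\cap\{t=0\}=[a,b]$ with $a>0$. Your hands-on Jordan-curve route is more self-contained, at the cost of the bookkeeping you flag (choosing $\eta$ simple and touching $\{t=0\}$ only at $a,b$; verifying that $\widetilde{\Omega}_-$ can meet $\partial R$ only along the inner axis segment so that the open-and-closed argument closes); those details are exactly what the cited FW lemma encapsulates, so the two arguments are equivalent in content and either suffices.
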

\begin{proof}
We shall argue by contradiction. Since $w_2 \in H^1_{0,r}(B)$ changes sign at least once, then we may assume by contradiction that there exist $0<r_1<r_2<r_3<1$ such that 
$$
w_2(r_1)>0, \quad w_2(r_3)>0, \quad w_2(r_2)<0.
$$
Let $W_2$ be the s-harmonic extension of $w_2$. Since $W_2$ is radially symmetric with respect to $x$, then we define $\widetilde{W}_2 : \overline{\R^2_+} \to \R$ by $\widetilde{W}_2 (|x|, t):=W_2(x, t)$. Observe that the nodal domain $\widetilde{\Omega}_1$ of $\widetilde{W}_2$ in $\overline{\R^2_+}$ are path connected, where $\widetilde{\Omega}_1$ is defined by \eqref{defo}. Therefore, there exists $\eta_1 \in C([0, 1], \widetilde{\Omega}_1)$ such that $\eta_1(0)=(r_1, 0)$ and $\eta_1(1)=(r_3, 0)$. Since $w_2(0)=\widetilde{W}_2(0, 0)=0$, then $(0, 0) \not \in \eta_1([0, 1])$. Define
$$
d:=\mbox{dist}(\eta_1([0, 1]), (0, 0)).
$$
It holds that $d>0$. If not, then there exists $\{t_n\} \subset [0, 1]$ such that $\eta_1(t_n) \to (0, 0)$ as $n \to \infty$. This implies that there exists $t_0 \in [0, 1]$ such that $\eta_1(t_0)=(0, 0)$, which is impossible. Since $w_2(0)=0$ and $\widetilde{W}_2$ changes sign in any relative neighborhood of $(0, 0)$ in $\overline{\R^2_+}$ by Lemma \ref{schange}, then there exists $z \in \overline{\R^2_+}$ such that $|z|<d$ and $\widetilde{W}_2(z)<0$. It yields that $z \in \widetilde{\Omega}_2$, where $\widetilde{\Omega}_2$ is defined by \eqref{defo}. Since the nodal domain $\widetilde{\Omega}_2$ of $\widetilde{W}_2$ in $\overline{\R^2_+}$ is path connected as well, then there exists $\eta_2 \in C([0, 1], \widetilde{\Omega}_2)$ such that $\eta_1(0)=z$ and $\eta_2(1)=(r_2, 0)$. It then follows from \cite[Lemma 6.2]{FW} that $\eta_1$ and $\eta_2$ intersect in $\R^2$. This is impossible. It in trun suggests that $w_2$ changes sign only once and the proof is complete.
\end{proof}

\begin{lem} \label{schange1}
Let $w_2 \in H^1_{0, r}(B)$ be an eigenfunction to \eqref{equ1} corresponding to $\sigma_2$. If $w_2(0)=0$, then $w_2=0$ in $B$. In particular, it holds that $\sigma_2$ is simple.
\end{lem}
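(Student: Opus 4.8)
The plan is to argue by contradiction, replacing one part of $w_2$ by a symmetric--decreasing rearrangement, in the spirit of \cite{FW}. Suppose $w_2(0)=0$ but $w_2\not\equiv 0$ in $B$. Then Lemma \ref{schange11} forces $w_2$ to change sign exactly once in the radial direction, so there is $r_0\in(0,1)$ such that, after replacing $w_2$ by $-w_2$ if needed, $w_2>0$ on $B_{r_0}\setminus\{0\}$ and $w_2<0$ on $B\setminus\overline{B_{r_0}}$. Write $w_2=w_2^+-w_2^-$; then $w_2^+$ is supported in $\overline{B_{r_0}}$ with $w_2^+(0)=0$, $w_2^-$ is supported in $\overline{B}\setminus B_{r_0}$, both lie in $X_0^1(B)$, and their supports are disjoint up to a null set. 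Testing \eqref{equ1} against $w_2^+$ and against $w_2^-$, and using the disjointness of the supports (which makes the mixed term in the Gagliardo seminorm equal to $-\Gamma$, where $\Gamma:=C(N,s)\int_{\R^N}\int_{\R^N}w_2^+(x)w_2^-(y)|x-y|^{-(N+2s)}\,dx\,dy\in(0,\infty)$), one gets, with the shorthand $Q(\varphi):=\int_B|\nabla\varphi|^2\,dx+[\varphi]_s^2+\int_B V\varphi^2\,dx$, the two identities $Q(w_2^{\pm})=\sigma_2\|w_2^{\pm}\|_2^2-\Gamma$.

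Next I would replace $w_2^+$ by its symmetric--decreasing rearrangement $v:=(w_2^+)^{*}$, which is radial, non-increasing, supported in $\overline{B_{r_0}}$, with $v(0)=\max v>0$ and $\|v\|_2=\|w_2^+\|_2$. The P\'olya--Szeg\H{o} inequalities for the Dirichlet integral and for $[\,\cdot\,]_s$ (see \cite{FL,FLS}) give $\int_B|\nabla v|^2\le\int_B|\nabla w_2^+|^2$ and $[v]_s\le[w_2^+]_s$, while, since $-V=(p-1)u^{p-2}$ is radially decreasing and hence symmetric--decreasing, the Hardy--Littlewood inequality gives $\int_B V v^2\le\int_B V|w_2^+|^2$; at least one of these is strict, because $w_2^+$ is radial with $w_2^+(0)=0<\max w_2^+$ and therefore is not a translate of a symmetric--decreasing function. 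Hence $Q(v)<\sigma_2\|v\|_2^2-\Gamma$. I would then choose $t:=\langle w_2^-,w_1\rangle_2/\langle v,w_1\rangle_2>0$ (both inner products are positive since $w_1>0$) so that $\varphi:=tv-w_2^-\in H_{0,r}^1(B)\setminus\{0\}$ is $L^2$--orthogonal to $w_1$, and feed $\varphi$ into \eqref{min2}. Using once more that $v$ and $w_2^-$ have disjoint supports --- so that $\|\varphi\|_2^2=t^2\|v\|_2^2+\|w_2^-\|_2^2$ and $Q(\varphi)=t^2Q(v)+Q(w_2^-)+2t\widetilde\Gamma$, with $\widetilde\Gamma:=C(N,s)\int_{\R^N}\int_{\R^N}v(x)w_2^-(y)|x-y|^{-(N+2s)}\,dx\,dy\ge 0$ --- together with the bound $\widetilde\Gamma\le\Gamma$ explained below, a short computation gives
\begin{align*}
\sigma_2\le\frac{Q(\varphi)}{\|\varphi\|_2^2}=\frac{t^2Q(v)+Q(w_2^-)+2t\widetilde\Gamma}{t^2\|v\|_2^2+\|w_2^-\|_2^2}<\sigma_2-\frac{\Gamma(t-1)^2}{t^2\|v\|_2^2+\|w_2^-\|_2^2}\le\sigma_2,
\end{align*}
a contradiction (the strict inequality uses $Q(v)<\sigma_2\|v\|_2^2-\Gamma$ and $t>0$). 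Hence $w_2\equiv 0$ in $B$. Simplicity of $\sigma_2$ is then immediate: if $w_2,\widetilde w_2$ are eigenfunctions for $\sigma_2$, both are nonzero at the origin by what was just proved, so $\widetilde w_2(0)w_2-w_2(0)\widetilde w_2$ is an eigenfunction for $\sigma_2$ vanishing at $0$, hence identically $0$, i.e.\ $w_2$ and $\widetilde w_2$ are proportional.

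The main obstacle is the inequality $\widetilde\Gamma\le\Gamma$, which says that concentrating $w_2^+$ toward the origin --- away from the annular support of $w_2^-$ --- cannot increase the $|x-y|^{-(N+2s)}$--interaction energy between the two parts. I would prove it by writing $\widetilde\Gamma-\Gamma=C(N,s)\int_{B_{r_0}}(v-w_2^+)\,h$, where $h(x):=\int_{B\setminus B_{r_0}}w_2^-(y)|x-y|^{-(N+2s)}\,dy\ge 0$, and showing that $h$ is radial and non-decreasing on $B_{r_0}$: for $|y|>r_0$ one has $\Delta_x|x-y|^{-(N+2s)}=(N+2s)(2s+2)|x-y|^{-(N+2s)-2}>0$, so $h$ is subharmonic on $B_{r_0}$, and a radial subharmonic function on a ball is non-decreasing in $|x|$. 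Given this monotonicity, the Hardy--Littlewood inequality applied to the bounded truncations $\min(h,M)$, followed by $M\to\infty$ via monotone convergence (all the integrals in play being finite since $v,w_2^-\in X_0^1(B)$), yields $\int_{B_{r_0}}v\,h\le\int_{B_{r_0}}w_2^+\,h$, that is $\widetilde\Gamma\le\Gamma$. The remaining steps are routine manipulations of rearrangement and Cauchy--Schwarz type inequalities.
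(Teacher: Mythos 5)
Your argument is correct and uses the same underlying mechanism as the paper: reduce to one sign change via Lemma \ref{schange11}, symmetrically rearrange $w_2^+$, build a radial test function orthogonal to $w_1$, and feed it into the Rayleigh quotient for $\sigma_2$. Within that framework you make two genuine variations that are worth flagging.

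First, you prove the interaction inequality $\widetilde\Gamma\le\Gamma$ (equivalently $[w_2^+,w_2^-]_s\le[(w_2^+)^*,w_2^-]_s$) from scratch, via the observation that $h(x):=\int_{B\setminus B_{r_0}}w_2^-(y)|x-y|^{-(N+2s)}\,dy$ is subharmonic on $B_{r_0}$ (since $\Delta_x|x-y|^{-(N+2s)}=(N+2s)(2s+2)|x-y|^{-(N+2s+2)}>0$), hence radially non-decreasing, and then applying Hardy--Littlewood with $M-\min(h,M)$ as the symmetric-decreasing weight and passing $M\to\infty$. This is a clean, self-contained alternative to the paper's citation of \cite[Theorem~3.2]{FW}, and the truncation correctly handles the possible blow-up of $h$ at $\partial B_{r_0}$.

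Second, you invoke a \emph{strict} rearrangement inequality $Q(v)<Q(w_2^+)$ at the outset and derive the contradiction directly, whereas the paper runs a chain of non-strict inequalities, shows $\kappa=1$, reads off equality in the Hardy--Littlewood step for the potential term, and only then appeals to the equality case of \cite[Theorem~3.4]{LL} to conclude $w_2^+=(w_2^+)^*$. Your shortcut is fine, but the phrase ``at least one of these is strict'' should be pinned down: the equality case of P\'olya--Szeg\H{o} for $\int|\nabla\cdot|^2$ (Brothers--Ziemer) carries nondegeneracy hypotheses that are awkward here, so you should say explicitly that you are using either the equality case for the Gagliardo seminorm (Frank--Seiringer: $[f^*]_s=[f]_s$ forces $f$ to be a translate of $f^*$, which $w_2^+$ is not since it is radial with $w_2^+(0)=0<\max w_2^+$), or, as the paper does, the equality case of \cite[Theorem~3.4]{LL} applied to the strictly symmetric-decreasing weight $-V=(p-1)u^{p-2}$. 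Either choice closes the gap; leaving the justification at the level of ``one of them must be strict'' without naming the responsible equality theorem is the only soft spot in an otherwise sound argument.

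One small bookkeeping remark: your normalization $\varphi=tv-w_2^-$ with $t=\langle w_2^-,w_1\rangle_2/\langle v,w_1\rangle_2$ is a rescaling of the paper's $\varphi=(w_2^+)^*-\kappa w_2^-$; the resulting Rayleigh-quotient computation is equivalent, and your identity $Q(\varphi)=t^2Q(v)+Q(w_2^-)+2t\widetilde\Gamma$ (using disjointness of supports) is correct. The simplicity of $\sigma_2$ at the end is handled exactly as in the paper, by forming $\widetilde w_2(0)w_2-w_2(0)\widetilde w_2$.
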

\begin{proof}
On the contrary, we shall assume that $w_2 \neq 0$ in $B$. By Lemma \ref{schange11}, we then get that $w_2$ changes sign only once. Therefore, without restriction, we may assume that there exists $0<r_0<1$ such that $w_2 \geq 0$ in $B_{r_0}(0)$ and $w_2 \leq 0$ in $B \backslash B_{r_0}$. In addition, we know that $w_2^+ \neq 0$ and $w_2^- \neq 0$, where
$$
w_2^+:=\max\left\{w_2(x), 0\right\}, \quad w_2^-:=\max\left\{-w_2(x), 0\right\}, \quad w_2=w_2^+-w_2^-.
$$ 
Let $(w_2^+)^*$ denote the symmetric-decreasing rearrangement of $w_2^+$. It follows that $\mbox{supp} \,\, (w_2^+)^* \subset B_{r_0}$ and $\mbox{supp} \,\, (w_2^+)^* \cap \mbox{supp} \,\, w_2^-=\emptyset$. Thanks to \cite[Theorem 3.7 $\&$ Lemma 7.17]{LL}, we then have that
\begin{align} \label{r1}
\int_{\R^N} |\nabla (w_2^+)^*|^2 \,dx \leq \int_{\R^N} |\nabla w_2^+|^2 \,dx, \quad  [(w_2^+)^*]_s^2 \leq [w_2^+]_s^2.
\end{align}
Furthermore, it holds that $\|w_2^+\|_2=\|(w_2^+)^*\|_2$ and $((w_2^+)^2)^*=((w_2^+)^*)^2$. Since $V \leq 0$ in $\R^N$ and it is radially symmetric, by \cite[Theorem 3.4]{LL}, then
$$
-\int_{\R^N} V (w_2^+)^2 \,dx \leq -\int_{\R^N} V((w_2^+)^*)^2 \,dx.
$$
It then leads to
\begin{align} \label{r2}
\int_{\R^N} (\sigma_2-V) (w_2^+)^2 \,dx \leq \int_{\R^N} (\sigma_2-V) ((w_2^+)^*)^2 \,dx.
\end{align}
Since $w_2 \in H^1_{0,r}(B)$ solves \eqref{equ1} with $\sigma=\sigma_2$, by testing \eqref{equ1} with $w_2^+$ and $w_2^-$ respectively, then we get that
\begin{align} \label{r3}
\begin{split}
\int_{\R^N} |\nabla w_2^+|^2 \,dx + \int_{\R^N} (V - \sigma_2) (w_2^+)^2 \,dx &= [w_2^+, w_2^-]_s-[w_2^+]_s^2, \\
\int_{\R^N} |\nabla w_2^-|^2 \,dx + \int_{\R^N} (V - \sigma_2) (w_2^-)^2 \,dx &= [w_2^+, w_2^-]_s-[w_2^-]_s^2,
\end{split}
\end{align}
where
$$
[w^+_2, w_2^-]_s:=\frac{C(N,s)}{2} \int_{\R^N} \int_{\R^N} \frac{\left(w^+_2(x)-w^+_2(y)\right)\left(
w^-_2(x)-w^-_2(y)\right)}{|x-y|^{N+2s}} \,dxdy.
$$
Let $\kappa>0$ be such that
$$
\int_{B} \left((w_2^+)^*-\kappa w_2^- \right)w_1 \,dx=0.
$$
Note that $[w_2^+, w_2^-]_s \leq [(w_2^+)^*, w_2^-]_s$, which has been established in the proof of \cite[Theorem 3.2]{FW}. Therefore, by \eqref{min2} and \eqref{r1}, we derive that
\begin{align} \label{w220} 
\int_{B} \left(\sigma_2-V\right)\left((w_2^+)^*-\kappa w_2^- \right)^2 \,dx 
&\leq \int_{B} |\nabla \left((w_2^+)^*-\kappa w_2^-\right)|^2 \,dx + [(w_2^+)^*-\kappa w_2^-]_s^2 \\ \nonumber
& =\int_{B} |\nabla (w_2^+)^*|^2 \,dx+ \kappa^2 |\nabla w_2^-|^2 \,dx +[(w_2^+)^*]_s^2+\kappa^2 [w_2^-]_s^2-2\kappa [(w_2^+)^*, w_2^-]_s \\  \nonumber
& \leq \int_{B} |\nabla w_2^+|^2 \,dx+ \kappa^2  |\nabla w_2^-|^2 \,dx +[w_2^+]_s^2+\kappa^2 [w_2^-]_s^2-2\kappa [w_2^+, w_2^-]_s.  \nonumber 
\end{align}
Combining this with \eqref{r2} and \eqref{r3}, we then get that
\begin{align*}
\int_{B} \left(\sigma_2-V\right)\left(w_2^+-\kappa w_2^- \right)^2 \,dx
&=\int_{B} \left(\sigma_2-V\right)\left((w_2^+)^2+\kappa^2(w_2^-)^2 \right) \,dx \\
& \leq \int_{B} \left(\sigma_2-V\right)\left(((w_2^+)^*)^2+\kappa^2(w_2^-)^2 \right) \,dx \\
&=\int_{B} \left(\sigma_2-V\right)\left((w_2^+)^*-\kappa w_2^- \right)^2 \,dx \\
&=\int_{B} \left(\sigma_2-V\right)\left(w_2^+-\kappa w_2^- \right)^2 \,dx +(1-\kappa)^2[w_2^+, w_2^-]_s.
\end{align*}
Since $[w_2^+, w_2^-]_s<0$, then $\kappa=1$ and 
\begin{align} \label{w210}
\int_{B} \left(\sigma_2-V\right)\left(w_2^+-w_2^- \right)^2 \,dx=\int_{B} \left(\sigma_2-V\right)\left((w_2^+)^*- w_2^- \right)^2 \,dx.
\end{align}
Coming back to \eqref{w220} and noting that
\begin{align*}
\int_{B} |\nabla w_2^+|^2+ |\nabla w_2^-|^2 \,dx +[w_2^+]_s^2+ [w_2^-]_s^2-2 [w_2^+, w_2^-]_s&=\int_{B} |\nabla w_2|^2 \,dx +[w_2]^2_s \\
&=\int_{B} \left(\sigma_2-V\right)w_2^2 \,dx\\
&=\int_{B} \left(\sigma_2-V\right)\left(w_2^+-w_2^- \right)^2 \,dx,
\end{align*}
we then get that the equality in \eqref{w220} holds true, i.e. $w=(w_2^+)^*- w_2^-$ is an eigenfunction to \eqref{equ1} corresponding to $\sigma_2$. In addition, using \eqref{w210} and the fact that $\mbox{supp} \,\, (w_2^+)^* \subset B_{r_0}$ and $\mbox{supp} \,\, w_2^+ \subset B_{r_0}$, we have that
$$
-\int_{\R^N} V (w_2^+)^2 \,dx=-\int_{B} V (w_2^+)^2 \,dx=-\int_{B} V((w_2^+)^*)^2\,dx=-\int_{\R^N} V((w_2^+)^2)^*\,dx.
$$
It then follows from \cite[Theorem 3.4]{LL} that $w_2^+=(w_2^+)^*$,
i.e. $w_2=w$ in $B$. As a consequence, it holds that
$$
w_2(0)=(w_2^+)^*(0)=\|(w_2^+)^*(0)\|_{\infty}>0.
$$
This is impossible and the proof is complete.
\end{proof}

\begin{lem} \label{schange2}
Let $w_2 \in H^1_{0, r}(B)$ be an eigenfunction to \eqref{equ1} corresponding to $\sigma_2$. Then $w_2$ changes sign only once in the radial direction if and only if it holds that
\begin{align} \label{signw2}
w_2(0) \int_{B} w_2 \,dx<0.
\end{align}
\end{lem}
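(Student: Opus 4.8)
The plan is to reduce first to the case $w_2(0)>0$ (replacing $w_2$ by $-w_2$ if necessary), so that the assertion becomes: $w_2$ changes sign exactly once $\iff\int_B w_2\,dx<0$. By Lemma \ref{schange1} we have $w_2(0)\neq 0$ and $\sigma_2$ simple, and by Lemma \ref{schange4} the function $w_2$ changes sign either once or twice in the radial direction. Hence it suffices to prove the two implications \textbf{(i)} if $w_2$ changes sign exactly once, then $\int_B w_2\,dx<0$; \textbf{(ii)} if $w_2$ changes sign twice, then $\int_B w_2\,dx\geq 0$. These two statements immediately give the equivalence.

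For \textbf{(i)}, I would write $w_2>0$ on $\{0<|x|<r_0\}$ and $w_2<0$ on $\{r_0<|x|<1\}$ and compare with the first eigenfunction $w_1>0$ of \eqref{equ1}, which is radially non-increasing because it is the ground state of the radial operator $-\Delta+(-\Delta)^s+V$ in $B$ and $-V=(p-1)u^{p-2}$ is radially non-increasing; concretely, $w_1$ equals its symmetric-decreasing rearrangement by the argument used in Lemma \ref{schange1} (the strict rearrangement inequalities for both the $\nabla$ and $[\,\cdot\,]_s$ terms, the rearrangement inequality for the potential term, and simplicity of $\sigma_1$). Setting $W(r):=\int_{\{|x|<r\}}w_2\,dx$, so $W(0)=0$, $W(1)=\int_B w_2\,dx$, $W>0$ on $(0,r_0]$ and $W$ is strictly decreasing on $(r_0,1)$, a radial integration by parts using $\langle w_2,w_1\rangle_2=0$ and $w_1(1)=0$ gives $\int_0^1(-w_1'(r))W(r)\,dr=0$. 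Since $-w_1'\geq 0$, were $W$ positive throughout $(0,1)$ this would force $w_1'\equiv 0$, hence $w_1\equiv w_1(1)=0$, a contradiction; thus $W$ becomes negative on $(r_0,1)$, and being decreasing there it satisfies $W(1)<0$, i.e. $\int_B w_2\,dx<0$.

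For \textbf{(ii)}, I would suppose $w_2$ changes sign twice, say $+,-,+$ on $\{0<|x|<r_1\}$, $\{r_1<|x|<r_2\}$, $\{r_2<|x|<1\}$, argue by contradiction assuming $\int_B w_2\,dx<0$, and use the $s$-harmonic extension $W_2$. Expanding the Poisson kernel in \eqref{defw} shows that, as $|(x,t)|\to\infty$ in $\R^{N+1}_+$,
$$
W_2(x,t)=P(N,s)\,\frac{t^{2s}}{(t^2+|x|^2)^{\frac{N+2s}{2}}}\Bigl(\int_B w_2\,dy+o(1)\Bigr),
$$
so $W_2<0$ outside a large ball of $\overline{\R^{N+1}_+}$; since $W_2(\cdot,0)=w_2$ is supported in $\overline B$, the positive nodal domain $\Omega_+$ of $W_2$ is bounded. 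By Lemma \ref{nodal} there are exactly two nodal domains $\Omega_\pm$, radially symmetric in $x$; passing to the half-plane as in \eqref{defo}, $\widetilde\Omega_\pm$ are path connected, $\widetilde\Omega_+$ is bounded and $\widetilde\Omega_-$ contains a neighbourhood of infinity. Now $\widetilde\Omega_+$ contains axis points $(\rho,0)$ with $\rho\in(0,r_1)$ and $(\rho',0)$ with $\rho'\in(r_2,1)$, and $\widetilde\Omega_-$ contains $(\rho'',0)$ with $\rho''\in(r_1,r_2)$ as well as far-away points; joining the first pair by a path $\eta_+\subset\widetilde\Omega_+$ and $(\rho'',0)$ to a far point by $\eta_-\subset\widetilde\Omega_-$, the curve $\eta_+$ together with the axis segment between its endpoints bounds a relatively compact region of $\overline{\R^2_+}$ containing $(\rho'',0)$, which $\eta_-$ must leave although it can cross neither $\eta_+$ (the two lie in disjoint nodal domains) nor the axis outside $[\rho,\rho']$ (where $W_2>0$ for small $t$). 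This contradiction (set up exactly as in the proofs of Lemmas \ref{schange4} and \ref{schange11}, via the non-crossing lemmas of \cite{FW}) yields $\int_B w_2\,dx\geq 0$, completing the proof.

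I expect the real work to be in \textbf{(ii)}: one must make the far-field asymptotics of $W_2$ quantitative enough both to locate the sign of $W_2$ near infinity and to deduce boundedness of $\widetilde\Omega_+$, and then arrange the Jordan-curve/non-crossing configuration of $\eta_+$ and $\eta_-$ so that the non-crossing lemma of \cite{FW} applies verbatim; by comparison \textbf{(i)} is routine once the monotonicity of $w_1$ is recorded.
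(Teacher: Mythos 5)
Your proposal is correct in substance and recovers the lemma, but it differs from the paper's argument in both directions, mostly in ways that make it longer, and in direction (ii) it relies on a topological configuration that is not the one the paper's cited non-crossing lemma is applied to.

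For direction (i) (one sign change $\Rightarrow$ $\int_B w_2 < 0$), you integrate $\langle w_2,w_1\rangle_2=0$ by parts against the distribution function $W(r)=\int_{\{|x|<r\}}w_2$ and analyze the sign of $\int_0^1(-w_1')W$. This works. The paper instead just subtracts $w_1(r_0)$ and splits the orthogonality integral: $0=\int_B w_2 w_1 = \int_{B_{r_0}}w_2 w_1 + \int_{B\setminus B_{r_0}}w_2 w_1 > w_1(r_0)\int_B w_2$, using that $w_1$ is strictly radially decreasing and that $w_2\ge 0$ on $B_{r_0}$, $w_2\le 0$ outside. That single inequality gives the conclusion directly with no calculus and no discussion of where $W$ goes negative. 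Your route is correct but is performing the same comparison in a roundabout way. Also note the paper obtains monotonicity of $w_1$ from the moving-plane method, which gives \emph{strict} decrease; your rearrangement argument only delivers ``non-increasing'' a priori, which is still enough for your $\int(-w_1')W=0$ argument but is a weaker statement than what the paper uses.

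For direction (ii) the paper does something genuinely cleaner than your Jordan-curve argument: assuming two sign changes with $w_2(0)>0$, it first proves $W_2(0,t)\ge 0$ for all $t>0$ by connecting $(r_2,0)$ to a hypothetical negative axis point $(0,t_0)$ inside $\widetilde\Omega_2$ and $(r_1,0)$ to $(r_3,0)$ inside $\widetilde\Omega_1$, then invoking \cite[Lemma 6.3]{FW} to force an intersection. Once $W_2(0,\cdot)\ge 0$ is known, the one-dimensional limit $\lim_{t\to\infty}t^N W_2(0,t)=P(N,s)\int_{\R^N}w_2$ immediately gives $\int_B w_2\ge 0$. You instead assume $\int_B w_2<0$, push the far-field asymptotic of $W_2$ over all of $\overline{\R^{N+1}_+}$ to conclude $\widetilde\Omega_+$ is bounded, and try to trap a path $\eta_-$ from $(\rho'',0)$ to infinity inside the region bounded by $\eta_+$ and the axis segment. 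This is essentially the same mechanism but the configuration you describe (one endpoint at infinity, the other on the axis \emph{inside} the segment $[\rho,\rho']$, with the axis segment $(r_1,r_2)$ actually lying in $\widetilde\Omega_-$) is not literally the setup of any of \cite[Lemmas 6.1--6.3]{FW} as used elsewhere in the paper; you would need to check that their non-crossing statement covers it. It very likely does, but the paper's version avoids the issue entirely by aiming the second path at the $t$-axis, where the relevant lemma applies directly and where the sign information is then extracted from a clean pointwise limit along that axis rather than a uniform far-field expansion. So: correct idea, but your direction (ii) carries an unverified reduction to the [FW] lemmas and is more work than necessary, while your direction (i) replaces a one-line splitting by an integration by parts; in both places the paper's route is shorter and more self-contained.
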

\begin{proof}
Let us assume that $w_2$ change sign only once in the radial direction. We are going to prove that \eqref{signw2} holds true.
Without restriction, in the spirit of Lemma \ref{schange1}, we may assume that $w_2(0)>0$. Let $w_1 \in H^1_{0, r}(B)$ be the positive eigenfunction to \eqref{equ1} corresponding to the first eigenvalue $\sigma_1$. Using the well-known moving plane method, we know that $w_1$ is strictly decreasing in the radial direction. From the assumption and the fact that $w_2(0)>0$, we know that there exists $0<r_0<1$ be such that $w_2 \geq 0$ in $B_{r_0}$ and $w_2 \leq 0$ in $B \backslash B_{r_0}$. Accordingly, we conclude that
$$
0=\int_{B} w_2w_1 \,dx=\int_{B_{r_0}} w_2w_1 \,dx+\int_{B \backslash B_{r_0}} w_2w_1 \,dx>w_1(r_0) \int_{B} w_2 \,dx.
$$
It then results in
$$
\int_{B} w_2 \,dx<0.
$$
Therefore, \eqref{signw2} holds true.

Let us now assume that \eqref{signw2} holds true. We are going to verify that $w_2$ changes sign only once in the radial direction. For this, we shall argue by contradiction that $w_2$ changes sign at least twice in the radial direction. Without restriction, we may assume that $w_2(0)>0$. Thereby, there exist $0<r_1<r_2<r_3<1$ such that
$$
w_2(r_1)>0, \quad w_2(r_2)<0, \quad w_2(r_3)>0.
$$
Let $W_2$ be the s-harmonic extension of $w_2$. It is clear that $W_2$ radially symmetric with respect to $x$. We are about to prove that $W_2(0, \cdot) \geq 0$ in $\R^+$. Define $\widetilde{W}_2(|x|, t):=W_2(x, t)$. We suppose by contradiction that there exists $t_0>0$ such that $\widetilde{W}_2(0, t_0)<0$. It then yields that $\widetilde{\Omega}_2 \neq \emptyset$, where $\widetilde{\Omega}_2$ is defined by \eqref{defo}. Since $\widetilde{\Omega}_1$ and $\widetilde{\Omega}_2$ are path connected, then there exist $\eta_1 \in C([0, 1], \widetilde{\Omega}_1)$ and $\eta_2 \in C([0, 1], \widetilde{\Omega}_2)$ such that
$$
\eta_1(0)=(r_1, 0), \quad \eta_1(1)=(r_3, 0), \quad \eta_2(0)=(r_2, 0), \quad \eta_2(1)=(0, t_0).
$$
Clearly, according to \cite[Lemma 6.3]{FW}, we see that $\eta_1$ and $\eta_2$ have to intersect in $\R^2_+$. This is impossible. It then follows that $W_2(0, \cdot) \geq 0$. Since $w_2 \in L^1(\R^N)$, by \eqref{defw} and the dominated convergence theorem, then
$$
\lim_{t \to +\infty} t^NW_2(0, t)=\lim_{t \to +\infty} P(N, s) \int_{\R^N} \frac{w_2(y)}{\left(1+ \frac{|y|^2}{t^2}\right)^{\frac{N+2s}{2}}} \,dy=P(N, s) \int_{\R^N} w_2 \,dx.
$$
This implies that
$$
\int_{\R^N} w_2 \,dx \geq 0.
$$
We then reach a contradiction. It then in turn shows that $w_2$ changes sign only once in the radial direction. This completes the proof.
\end{proof}

\begin{lem} \label{schange3}
Let $\lambda_{2, s}$ be the second eigenvalue to the following eigenvalue problem in $H^1_{0,r}(B)$,
\begin{align} \label{equ11}
-\Delta \varphi + (-\Delta)^s \varphi=\lambda \varphi \quad \mbox{in} \,\,\, B, \quad \varphi=0 \quad \mbox{in} \,\,\, \R^N \backslash {B},
\end{align}
where $0<s \leq 1$. Let $\varphi_{2,s}$ be an eigenfunction to \eqref{equ11} corresponding to $\lambda_{2,s}$. Then $\varphi_{2,s}$ changes sign only once in the radial direction and it holds that
$$
\varphi_{2,s}(0)\int_{B} \varphi_{2,s} \,dx<0.
$$
\end{lem}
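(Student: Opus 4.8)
The plan is to run a continuation argument in the parameter $s \in (0,1]$, anchored at the classical endpoint $s=1$, and to combine it with the equivalence proved in Lemma \ref{schange2} (applied to the eigenvalue problem \eqref{equii}, i.e. with potential $V\equiv 0$, which is covered by the same proofs). First I would record the endpoint: when $s=1$, \eqref{equ11} reduces to $-2\Delta\varphi=\lambda\varphi$ in $B$ with Dirichlet data, and by classical ODE analysis of the radial Bessel-type equation (or by the nodal-domain theory for $-\Delta$) the second radial eigenfunction $\varphi_{2,1}$ changes sign exactly once in the radial direction, with $\varphi_{2,1}(0)\neq 0$ and $\varphi_{2,1}(0)\int_B \varphi_{2,1}\,dx<0$. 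So the assertion holds at $s=1$.

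Next I would set up the continuation. Define the set $\mathcal{S}:=\{s\in(0,1] : \varphi_{2,s} \text{ changes sign exactly once in the radial direction}\}$; one must first check $\varphi_{2,s}$ is well-defined up to scalar, i.e. that $\lambda_{2,s}$ is simple — but this is exactly what Lemma \ref{schange1} gives once we know $\varphi_{2,s}(0)\neq 0$, and by Lemma \ref{schange2} (with $V\equiv0$) the sign condition $\varphi_{2,s}(0)\int_B\varphi_{2,s}\,dx<0$ is equivalent to "changes sign exactly once". Thus it suffices to show $\mathcal{S}=(0,1]$. The key is that $s\mapsto \lambda_{2,s}$ and $s\mapsto \varphi_{2,s}$ vary continuously: continuity of the eigenvalue follows from the variational characterization \eqref{lk} together with continuity of the Gagliardo seminorm $[\cdot]_s$ in $s$, and once the second eigenvalue stays simple along a subinterval, standard perturbation theory (or a direct compactness argument using the compact embedding $H^1_{0,r}(B)\hookrightarrow L^2(B)$) gives norm-continuity of a suitably normalized eigenfunction $\varphi_{2,s}$. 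Openness of $\mathcal{S}$: if $s_0\in\mathcal{S}$, then $\varphi_{2,s_0}(0)\neq 0$ and $\varphi_{2,s_0}(0)\int_B\varphi_{2,s_0}\,dx<0$ strictly, so by continuity both remain true for $s$ near $s_0$, hence by Lemma \ref{schange2} those $s$ lie in $\mathcal{S}$; this also shows $\lambda_{2,s}$ stays simple near $s_0$, which legitimizes the use of $\varphi_{2,s}$.

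The main obstacle is closedness of $\mathcal{S}$ — equivalently, ruling out that along a sequence $s_n\to s_* $ with $s_n\in\mathcal{S}$ the limiting eigenfunction degenerates: either $\varphi_{2,s_*}(0)=0$, or $\varphi_{2,s_*}$ picks up an extra sign change, or $\lambda_{2,s_*}$ fails to be simple. To handle this I would first obtain the uniform regularity estimates advertised in the introduction: normalize $\|\varphi_{2,s_n}\|_{L^2}=1$, derive a uniform $L^\infty$ bound, and then — using the extremal-operator device of \cite{CM} to control the nonlocal term uniformly in $s$ — obtain a uniform $C^{0,\gamma}(\overline{B})$ (indeed interior $C^{1,\gamma}$) bound independent of $n$. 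By Arzelà–Ascoli, along a subsequence $\varphi_{2,s_n}\to\varphi_*$ uniformly on compacta (and in $H^1_{0,r}(B)$ by the compact embedding and the equation), where $\varphi_*$ is a nontrivial radial eigenfunction of \eqref{equ11} at $s=s_*$ with eigenvalue $\lambda_{2,s_*}$. Since each $\varphi_{2,s_n}$ changes sign exactly once, uniform convergence forces $\varphi_*$ to change sign \emph{at most} once; combined with $\varphi_*\not\equiv 0$, $\varphi_*$ must change sign at least once (it is orthogonal to the positive first eigenfunction), hence exactly once, and then Lemma \ref{schange1} forces $\varphi_*(0)\neq 0$. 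Finally, a compactness argument shows the only radial eigenvalue accumulating $\lambda_{2,s_n}$ is $\lambda_{2,s_*}$ itself and that it is simple, so $\varphi_* $ is (a multiple of) $\varphi_{2,s_*}$, giving $s_*\in\mathcal{S}$. Since $\mathcal{S}$ is nonempty ($1\in\mathcal{S}$), open, and closed in the connected interval $(0,1]$, we conclude $\mathcal{S}=(0,1]$, and the sign inequality $\varphi_{2,s}(0)\int_B\varphi_{2,s}\,dx<0$ follows from Lemma \ref{schange2}.
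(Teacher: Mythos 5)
Your proposal follows essentially the same route as the paper: anchor at the classical endpoint $s=1$ (where the paper establishes $\varphi_{2,1}(0)\int_B\varphi_{2,1}\,dx<0$ via the divergence theorem and Hopf's lemma), derive uniform H\"older/compactness estimates in $s$ using the extremal-operator machinery of \cite{CM} plus Arzel\`a--Ascoli, and then run a continuation argument in $s$ combining Lemma \ref{schange1} (simplicity from $\varphi_{2,s}(0)\neq 0$) and Lemma \ref{schange2} (equivalence between ``changes sign exactly once'' and the sign inequality). Your open-closed-connected formulation is logically equivalent to the paper's infimum-and-contradiction argument, so the two proofs are the same in substance.
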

\begin{proof}
Let us first consider the case that $s=1$. We are going to assert that
\begin{align} \label{s10}
\varphi_{2,1}(0) \int_{B} \varphi_{2,1} \,dx<0.
\end{align}
When $s=1$, it is well-known that $\varphi_{2,1}$ changes sign only once in the radial direction. In addition, by \eqref{equ11} and the divergence theorem, we observe that
\begin{align} \label{s11}
\lambda_{2,1}\int_{B} \varphi_{2,1} \,dx= -2\int_{B} \Delta \varphi_{2,1} \,dx=-2 \int_{\partial B} \partial_{\nu} \varphi_{2,1} \,dS,
\end{align}
where $\partial_{\mu}$ denotes the outer normal derivative on $\partial B$. Without restriction, we shall assume that $\varphi_{2,1}(0)>0$. Since $\varphi_{2,1}$ changes sign only once in the radial direction, by the classical Hopf's Lemma, then $\partial_{\nu} \varphi_{2,1}>0$ on $\partial B$. Noting that $\lambda_{2,1}>0$ and \eqref{s11}, we then derive that
$$
\int_{B} \varphi_{2,1} \,dx<0.
$$
It then leads to \eqref{s10}.

In the sequel, we shall assume that $0<s < 1$. Let $\varphi_{1,s} \in H^1_{0, r}(B)$ and $\|\varphi_{1,s}\|_2=1$ be the positive eigenfunction to \eqref{equ11} corresponding to the first eigenvalue $\lambda_{1,s}$, where
\begin{align*} 
\lambda_{1,s}:=\inf_{\varphi \in H^1_{0, r}(B) \backslash \{0\}} \frac{\displaystyle\int_{B} |\nabla \varphi|^2 \,dx + \int_{\R^N} |(-\Delta)^{\frac s 2} \varphi|^2 \,dx}{\displaystyle \int_{B} |\varphi|^2\,dx}=\inf_{\substack{w \in H^1_{0, r}(B) \backslash \{0\} \\ \|\varphi\|_2=1}}\displaystyle \int_{B} |\nabla \varphi|^2 \,dx + \int_{\R^N} |(-\Delta)^{\frac s 2} \varphi|^2 \,dx.
\end{align*}
Let $\varphi \in H^1_{0, r}(B)$ and $\|\varphi\|_2=1$. Using the above variational characterization of $\lambda_{1,s}$, we then have that
$$
\lambda_{1,s} \leq \int_{B} |\nabla \varphi|^2 \,dx +\int_{\R^N} |(-\Delta)^{\frac s 2} \varphi|^2 \,dx = \|\varphi\|_{H_0^1}^2.
$$
It then follows that $\{\lambda_{1,s}\}$ is bounded in $\R$. Moreover, testing \eqref{equ11} by $\varphi_{1,s}$, we then get that $\{\varphi_{1,s}\}$ is bounded in $H^1_{0,r}(B)$. Further, invoking \cite[Theorem 1.1]{SVWZ}, we then derive that $\|\varphi_{1,s}\|_{\infty} \leq C_0$, where $C_0>0$ is a constant depending on $N$ and $\|\varphi\|_{H_0^1}$.

Let $0<s_0<s<1$ for some $0<s_0<1$. Let $\mathcal{A}$ be a collection of symmetric matrices in $\R^{N \times N}$ whose eigenvalues belong to $[\lambda, \Lambda]$, where $0<\lambda \leq \Lambda$. Let $\mathcal{L}$ be a collection of linear operators. We say that a linear operator $L$ belongs to $\mathcal{L}$ if its corresponding kernel $K$ satisfies that
$$
0 \leq K(x) \leq \frac{1}{|x|^{N+2s}}.
$$
Define the extremal operators by
$$
{M}^+(X):=\max_{A \in \mathcal{A}} \mbox{tr}(AX), \quad {M}^-(X):=\min_{A \in \mathcal{A}} \mbox{tr}(AX),
$$
$$
{M}_{\mathcal{L}}^+(u)(x):=\sup_{L \in \mathcal{L}} Lu(x), \quad {M}_{\mathcal{L}}^+(u)(x):=\inf_{L \in \mathcal{L}} Lu(x),
$$
where
$$
Lu(x):=C(N,s)\int_{\R^N} \left(u(x+y)-u(x)-\nabla u(x) \cdot y \chi_{B} (y)\right) K(y) \,dy. 
$$
It then follows that
$$
{M}^-(D^2 \varphi_{1,s}) + {M}^-_{\mathcal{L}}(\varphi_{1,s}) \leq \Delta \varphi_{s, 1}-(-\Delta)^s  \varphi_{s, 1}=-\lambda_{1,s} \varphi_{s, 1} \leq C_1,
$$
$$
{M}^+(D^2 \varphi_{1,s}) + {M}^+_{\mathcal{L}}(\varphi_{1,s}) \geq \Delta \varphi_{s, 1}-(-\Delta)^s  \varphi_{s, 1}=-\lambda_{1,s} \varphi_{s, 1} \geq -C_1,
$$
where $C_1>0$ is a constant dependent on $s_0$, $N$ and $\|\varphi\|_{H_0^1}$. Following closely the proof of \cite[Theorem 4.1]{CM} and using \cite[Corollary 4.1]{DPV}, we can conclude that there exists $\alpha>0$ and $C>0$ depending on $\lambda$, $\Lambda$, $s_0$, $N$ and $\|\varphi\|_{H_0^1}$ such that
$$
|\varphi_{1,s}(x)-\varphi_{1,s}(0)| \leq C |x|^{\alpha}.
$$
As a consequence, we are able to get that
\begin{align} \label{ibdd}
\|\varphi_{1,s}\|_{C^{0, \alpha}(\overline{B_r})} \leq C' \left(\|\varphi_{s, 1}\|_{\infty} + 1\right) \leq C_2, \quad 0<r<1,
\end{align}
where $C_2>0$ is a constant depending on $\lambda$, $\Lambda$, $s_0$, $r$, $N$ and $\|\varphi\|_{H_0^1}$. Furthermore, utilizing Lemma \cite[Lemma 2.1]{BMS} and \cite[Corollary 4.2]{DPV}, we know that there exists a constant $\widetilde{C}>0$ dependent on $s_0$ and $N$ such that
\begin{align} \label{bbdd}
|\varphi_{1,s}(x)| \leq \widetilde{C} d(x), \quad \forall \,\, x \in B,
\end{align}
where $d(x):=\mbox{dist}(x, \R^N \backslash B)=1-|x|$ for $x \in B$. Define $B^{\delta}:=\left\{x \in B : d(x)<\delta\right\}$. 
Thanks to \eqref{bbdd}, we then get that
\begin{align} \label{bbdd1}
\lim_{\delta \to 0} \sup_{s_0 < s<1} \|\varphi_{1,s}\|_{L^\infty(B^{\delta})}=0.
\end{align}
At this point, taking into account \eqref{ibdd} and \eqref{bbdd1} together with Arzel\`a-Ascoli's theorem, we then can obtain that $\{\varphi_{1,s}\}_{s \in (s_0, 1]}$ is compact in $C(\overline{B})$. Let $0<\widetilde{s} \leq 1$ and $\{s_n\} \subset (0, 1)$ be such that $s_n \to \widetilde{s}$ as $n \to \infty$. Therefore, we are able to conclude that there exists $\widetilde{\varphi} \in C(\overline{B})$ such that $\varphi_{1,s_n} \to \widetilde{\varphi}$ in  $C(\overline{B})$ as $n \to \infty$. Since $\{\lambda_{1,s_n}\}$ is bounded in $\R$, then there exists $\widetilde{\lambda} \in \R$ such that $\lambda_{1,s_n} \to \widetilde{\lambda}$ as $n \to \infty$. 
Since $\{\varphi_{1, s_n}\}$ is bounded in $H^1_{0,r}(B)$, then $\varphi_{1,s_n} \wto \widetilde{\varphi}$ in $H^1_{0,r}(B)$ as $n \to \infty$. In addition, we find that that $(-\Delta)^{s_n} \varphi_{1,s_n} \to (-\Delta)^{\widetilde{s}} \widetilde{\varphi}$ as $n \to \infty$ in the sense of distribution. Since $\varphi_{1,s_n} \in H^1_{0,r}(B)$ solves the equation
$$
-\Delta \varphi_{1,s_n} + (-\Delta)^{s_n} \varphi_{1,s_n}=\lambda_{1,s_n} \varphi_{1,s_n} \quad \mbox{in} \,\,\, B, \quad \varphi_{s_n}=0 \quad \mbox{in} \,\,\, \R^N \backslash {B},
$$ 
then $ \widetilde{\varphi} \in H^1_{0,r}(B)$ satisfies the equation
\begin{align} \label{equ112}
-\Delta\widetilde{\varphi} + (-\Delta)^{\widetilde{s}}\widetilde{\varphi}=\widetilde{\lambda} \widetilde{\varphi} \quad \mbox{in} \,\,\, B, \quad \widetilde{\varphi}=0 \quad \mbox{in} \,\,\, \R^N \backslash {B}.
\end{align}
Noticing that $\varphi_{1,s_n}>0$ in $B$ and $\lambda_{1,s_n}>0$, we then obtain that $\widetilde{\varphi} \geq 0$ in $B$ and $\widetilde{\lambda} \geq 0$. Further, by the strong maximum principle, we then find that $\widetilde{\varphi}>0$ in $B$. It then follows that $\widetilde{\lambda}>0$. If not, one then has that $\widetilde{\varphi}=0$. Therefore, we get that $\widetilde{\varphi}=\varphi_{1, \widetilde{s}}$ and $\widetilde{\lambda}=\lambda_{1,\widetilde{s}}$, namely it holds that $\varphi_{1,s_n} \to \varphi_{1,\widetilde{s}}$ in $C(\overline{B})$ as $n \to \infty$. Accordingly, we are able to conclude that $\varphi_{1,s} \to \varphi_{1,\widetilde{s}}$ in $C(\overline{B})$ as $s \to \widetilde{s}$.

Applying the variational characterization of $\lambda_{2,s}$, we know that
\begin{align} \label{defl2}
\begin{split}
\lambda_{2,s}&=\inf_{\substack{\varphi \in H^1_{0, r}(B) \backslash\{0\} \\ \langle \varphi, \varphi_{1,s} \rangle_2 =0}}\frac{\displaystyle \int_{B} |\nabla \varphi|^2 \,dx + \int_{\R^N} |(-\Delta)^{s/2} \varphi|^2 \,dx}{\displaystyle \int_{B} |\varphi|^2\,dx} \\
&=\inf_{\substack {S \subset  H^1_{0, r}(B) \\ \mbox{dim}(S)=2}} \sup_{\varphi \in S \backslash \{0\}}\frac{\displaystyle \int_{B} |\nabla \varphi|^2 \,dx + \int_{\R^N} |(-\Delta)^{s/2} \varphi|^2 \,dx}{\displaystyle \int_{B} |\varphi|^2\,dx}.
\end{split}
\end{align}
Let $S:=\mbox{span} \{\varphi_{1,1}, \varphi_{2,1}\}$, where $\varphi_{1,1} \in H^1_{0,r}(B)$ and $\varphi_{1,2} \in H^1_{0,r}(B)$ are eigenfunctions to \eqref{equ11} corresponding to the eigenvalues $\lambda_{1,1}$ and $\lambda_{2,1}$ respectively.
Therefore, taking into account \eqref{defl2}, we obtain that
$$
\lambda_{2,s} \leq \sup_{\varphi \in S \backslash \{0\}}\frac{\displaystyle \int_{B} |\nabla \varphi|^2 \,dx + \int_{\R^N} |(-\Delta)^{s/2} \varphi|^2 \,dx}{\displaystyle \int_{B} |\varphi|^2\,dx}.
$$
This implies that $\{\lambda_{2,s}\}$ is bounded in $\R$. Let $\varphi_{2,s} \in H^1_{0, r}(B)$ be an eigenfunction corresponding to $\lambda_{2,s}$ with $\|\varphi_{2,s}\|_2=1$ and $\varphi_{2,s}(0) \neq 0$. In fact, if $\varphi_{2,s}(0)=0$, arguing as the proof of Lemma \ref{schange1}, we can prove that $\varphi_{2,s}=0$ in $B$. Without restriction, we may assume that $\varphi_{2,s}>0$. Observe that $\{\varphi_{2,s}\}$ is bounded in $H^1_{0,r}(B)$, because $\{\lambda_{s,2}\}$ is bounded in $\R$. Similarly, we are able to show that $\varphi_{2,s} \to \varphi_{2,\widetilde{s}}$ in $C(\overline{B})$ as $s \to \widetilde{s}$ as previously. 
As a consequence, making use of \eqref{s10}, we are able to conclude that there exists $s_0>0$ such that
$$
\varphi_{2,s}(0) \int_{B} \varphi_{2,s} \,dx<0, \quad s_0<s<1.
$$
Therefore, reasoning as the proof of Lemma \ref{schange2}, one can show that $\varphi_{2,s}$ changes sign precisely once in the radial direction for any $s_0<s<1$. Define
$$
s_*:=\inf \left\{0<s_0 \leq 1: \varphi_{2,s} \,\,\mbox{changes sign precisely once in the radial direction for any} \,\, s_0<s<1\right\}.
$$
If $s_*=0$, then the proof is completed. Let us assume that $s_*>0$. As a result, we find that $\varphi_{s_*, 1}$ changes sign precisely once in the radial direction. In the same spirit of Lemma \ref{schange2}, it then holds that
$$
\varphi_{s_*,2}(0) \int_{B} \varphi_{s_*,2} \,dx<0.
$$
On the other hand, we know that there exists $\{s_n\} \subset (0, s^*)$ satisfying $s_n \to s_*$ as $n \to \infty$ such that
$$
\varphi_{s_n,2}(0) \int_{B} \varphi_{s_n,2} \,dx \geq 0.
$$
However, this implies that
$$
\varphi_{s_*,2}(0) \int_{B} \varphi_{s_*,2} \,dx \geq 0.
$$
We then reach a contradiction and the desired conclusion follows. This completes the proof.
\end{proof}

\begin{lem} \label{schange0}
Let $w_2 \in H^1_{0, r}(B)$ be an eigenfunction to \eqref{equ1} corresponding to $\sigma_2$. Then $w_2$ changes sign only once in the radial direction.
\end{lem}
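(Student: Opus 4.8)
The plan is to prove this by a continuation argument in the potential, deforming $V=-(p-1)u^{p-2}$ to the zero potential, for which the statement is already contained in Lemma~\ref{schange3}. For $\tau\in[0,1]$ I would set $V_\tau:=\tau V=-\tau(p-1)u^{p-2}$; this is bounded, radial and nonpositive, with $-V_\tau$ radially nonincreasing, since $u$ is radially decreasing. Consider the eigenvalue problem obtained by replacing $V$ with $V_\tau$ in \eqref{equ1}. The proofs of Lemmas~\ref{simple}, \ref{nodal}, \ref{schange4}, \ref{schange}, \ref{schange11}, \ref{schange1} and \ref{schange2} only use that the potential is bounded, radial and has radially nonincreasing negative part, so they apply verbatim with $V$ replaced by $V_\tau$ for every $\tau$. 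In particular, writing $\sigma_2(\tau)$ for the second radial eigenvalue and $w_2^\tau$ for a corresponding eigenfunction: $\sigma_2(\tau)$ is simple, $w_2^\tau(0)\neq 0$ (so I normalise $\|w_2^\tau\|_2=1$, $w_2^\tau(0)>0$), $w_2^\tau$ changes sign at most twice in the radial direction, and $w_2^\tau$ changes sign precisely once if and only if $h(\tau):=w_2^\tau(0)\int_B w_2^\tau\,dx<0$. For $\tau=0$ the problem is exactly \eqref{equ11}, so $w_2^0$ is a normalisation of $\varphi_{2,s}$ and Lemma~\ref{schange3} gives $h(0)<0$; the goal is to propagate $h<0$ up to $\tau=1$, which by Lemma~\ref{schange2} is the assertion of the lemma.

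The second ingredient is continuity in $\tau$ of the normalised pair $(\sigma_2(\tau),w_2^\tau)$. From the min-max characterisation one gets $|\sigma_j(\tau)-\sigma_j(\tau')|\le|\tau-\tau'|\,\|V\|_\infty$, and testing the equation with $w_2^\tau$ gives a bound for $\|w_2^\tau\|_{X_0^1}$ uniform in $\tau$. Since $s$ is fixed along the deformation, I may write the equation as $-\Delta w_2^\tau+(-\Delta)^s w_2^\tau=(\sigma_2(\tau)-V_\tau)w_2^\tau$ with right-hand side uniformly bounded in $L^\infty(B)$, and the regularity theory of \cite{SVWZ} then yields a uniform bound in $C^1(\overline B)$ — notably, no extremal-operator machinery is needed here, in contrast with Lemma~\ref{schange3}. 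Hence $\{w_2^\tau\}$ is precompact in $C(\overline B)$. Along any $\tau_n\to\tau_0$ with $w_2^{\tau_n}\to\overline w$ in $C(\overline B)$ and weakly in $X_0^1(B)$, passing to the limit in the weak formulation shows that $\overline w$ is a $\sigma_2(\tau_0)$-eigenfunction of the $\tau_0$-problem, with $\|\overline w\|_2=1$ and, using the analogous convergence $w_1^{\tau}\to w_1^{\tau_0}$ of the (positive) first eigenfunctions, $\langle\overline w,w_1^{\tau_0}\rangle_2=0$; since $\overline w(0)=\lim w_2^{\tau_n}(0)\ge 0$ and $\overline w\not\equiv 0$, Lemma~\ref{schange1} for $V_{\tau_0}$ forces $\overline w(0)>0$, and simplicity of $\sigma_2(\tau_0)$ then gives $\overline w=w_2^{\tau_0}$. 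The standard subsequence argument yields $w_2^\tau\to w_2^{\tau_0}$ in $C(\overline B)$, so $h$ is continuous on $[0,1]$.

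With these in hand I would run the continuation. Assume for contradiction that $h(\tau)\ge 0$ for some $\tau\in(0,1]$ and put $\tau_*:=\inf\{\tau\in(0,1]:h(\tau)\ge 0\}$; since $h(0)<0$ and $h$ is continuous, $\tau_*>0$, $h(\tau_*)=0$, and $h<0$ on $[0,\tau_*)$. For $\tau\in[0,\tau_*)$, Lemma~\ref{schange2} together with $w_2^\tau(0)>0$ provides a unique $r(\tau)\in(0,1)$ with $w_2^\tau\ge 0$ on $\overline{B_{r(\tau)}}$ and $w_2^\tau\le 0$ on $B\backslash B_{r(\tau)}$. Pick $\tau_n\nearrow\tau_*$ with $r(\tau_n)\to r_*\in[0,1]$; from $w_2^{\tau_n}\to w_2^{\tau_*}$ in $C(\overline B)$ one obtains $w_2^{\tau_*}\ge 0$ on $B_{r_*}$ and $w_2^{\tau_*}\le 0$ on $B\backslash B_{r_*}$, so $w_2^{\tau_*}$ changes sign at most once. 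But $w_2^{\tau_*}\not\equiv 0$ and $\langle w_2^{\tau_*},w_1^{\tau_*}\rangle_2=0$ with $w_1^{\tau_*}>0$ force $w_2^{\tau_*}$ to be sign-changing, hence $r_*\in(0,1)$ and $w_2^{\tau_*}$ changes sign precisely once, so Lemma~\ref{schange2} gives $h(\tau_*)<0$, contradicting $h(\tau_*)=0$. Thus $h<0$ on all of $[0,1]$; taking $\tau=1$ and applying Lemma~\ref{schange2} with the original potential $V$ finishes the proof.

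I expect the main obstacle to be the limit-point analysis at $\tau_*$: establishing that the ``exactly one sign change'' structure, tracked through the thresholds $r(\tau)$, survives uniform convergence, so that a second sign change cannot be created along the deformation. This rests on the uniform $C^1(\overline B)$ estimates (to pass from $X_0^1$-boundedness to strong convergence in $C(\overline B)$) and on the fact, checked above, that Lemmas~\ref{schange1} and \ref{schange2} remain valid for the deformed potentials $V_\tau$.
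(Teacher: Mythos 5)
Your proposal is correct and follows essentially the same continuation-in-$\tau$ argument as the paper's proof: deform $V$ to $0$ via $V_\tau=\tau V$, initialize $h(0)<0$ from Lemma~\ref{schange3}, establish $C(\overline B)$ continuity of the normalised second eigenpair via the regularity theory of \cite{SVWZ} (with $s$ fixed, so no extremal-operator machinery is required), and propagate $h<0$ to $\tau=1$ through the equivalence of Lemma~\ref{schange2}. If anything, your treatment of the critical parameter $\tau_*$ --- tracking the sign-change radius $r(\tau)$, extracting a convergent subsequence, and ruling out $r_*\in\{0,1\}$ via orthogonality to the positive first eigenfunction --- is somewhat more explicit than the paper's, which asserts directly that $w_{2,\tau_*}$ changes sign once without spelling out this limit-point analysis.
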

\begin{proof}
Let $0 \leq \tau \leq 1$. Define $V_{\tau}:= \tau V$ in $B$, where $V=-(p-1)u^{p-2}$. Let $w_{1, \tau} \in H^1_{0,r}(B)$ be the eigenfunction to the following eigenvalue problem corresponding to the first eigenvalue $\sigma_{1, \tau}$ in $H^1_{0,r}(B)$,
\begin{align} \label{equ22}
-\Delta w + (-\Delta)^s w + V_{\tau} w=\sigma w \quad \mbox{in} \,\,\, B, \quad w=0 \quad \mbox{in} \,\,\, \R^N \backslash {B}.
\end{align}
Let $\{\tau_n\} \subset (0, 1]$ be such that $\tau_n \to \tau$ as $n \to \infty$. Let $w_{1, \tau}$ and $w_{1, \tau_n}$ be the eigenfunctions to \eqref{equ22} corresponding to the eigenvalues $\sigma_{1, \tau}$ and $\sigma_{1, \tau_n}$ respectively. Observe that
\begin{align*}
\sigma_{1, \tau_n} &= \inf_{w \in H^1_{0,r}(B) \backslash \{0\}}\frac{\displaystyle \int_{B} |\nabla w|^2 \,dx + \int_{\R^N} |(-\Delta)^{s/2} w|^2 \,dx + \int_{B} V_{\tau_n} |w|^2 \,dx}{\displaystyle \int_{B} |w|^2\,dx} \\
& \leq \sigma_{1, \tau} +  \frac{\displaystyle \int_{B} \left(V_{\tau_n} -V_{\tau}\right) |w_{1, \tau}|^2 \,dx}{\displaystyle \int_{B} |w_{1, \tau}|^2\,dx} 
=\sigma_{1, \tau} + o_n(1).
\end{align*}
Similarly, one can derive that 
\begin{align*}
\sigma_{1, \tau}  &= \inf_{w \in H^1_{0,r}(B) \backslash \{0\}}\frac{\displaystyle \int_{B} |\nabla w|^2 \,dx + \int_{\R^N} |(-\Delta)^{s/2} w|^2 \,dx + \int_{B} V_{\tau} |w|^2 \,dx}{\displaystyle \int_{B} |w|^2\,dx} \\
& \leq \sigma_{1, \tau_n} +  \frac{\displaystyle \int_{B} \left(V_{\tau} -V_{\tau_n}\right) |w_{1, \tau_n}|^2 \,dx}{\displaystyle \int_{B} |w_{1, \tau_n}|^2\,dx} =\sigma_{1, \tau_n} + o_n(1).
\end{align*}
Consequently, we have that $\sigma_{1, \tau_n} \to \sigma_{1, \tau}$ as $n \to \infty$. It then infers that $\{w_{1, \tau_n}\}$ is bounded in $H^1_{0,r}(B)$. Let $\widetilde{w} \in H^1_{0,r}(B)$ be such that $w_{1, \tau_n} \wto \widetilde{w}$ in $H^1_{0,r}(B)$ as $n \to \infty$. This implies that $\widetilde{w}$ is an eigenfunction to \eqref{equ22} corresponding to the eigenvalue $\sigma_{1, \tau}$ in $H^1_{0,r}(B)$. Since $\sigma_{1, \tau}$ is simple by Lemma \ref{simple}, then $\widetilde{w}=\kappa w_{1, \tau}$ for some $\kappa \neq 0$. Without restriction, we may assume $\kappa=1$. As a result, we can conclude that $w_{1, \tau_n} \to w_{1, \tau}$ in $H^1_{0,r}(B)$ as $n \to \infty$.
Since $w_{1, \tau_n} \in H^1_{0, r}(B)$ solves the equation
$$
-\Delta w_{1, \tau_n}  + (-\Delta)^s w_{1, \tau_n}  + V_{\tau_n} w_{1, \tau_n} =\sigma_{1, \tau_n} w_{1, \tau_n}  \quad \mbox{in} \,\,\, B, \quad w_{1, \tau_n} =0 \quad \mbox{in} \,\,\, \R^N \backslash {B},
$$
by \cite[Theorem 1.1]{SVWZ}, then $\|w_{1, \tau_n}\|_{\infty} \leq C_0$, where $C_0$ is a constant depending on $N$ and $s$.
Observe that
$$
\left\{
\begin{aligned}
&-\Delta \left(w_{1, \tau_n}-w_{1, \tau}\right) + (-\Delta)^s \left(w_{1, \tau_n}-w_{1, \tau}\right)   =\left(V_{\tau} w_{1, \tau}-V_{\tau_n} w_{1, \tau_n} \right)+ \left(\sigma_{1, \tau_n} w_{1, \tau_n}- \sigma_{1, \tau} w_{1, \tau} \right)\quad \mbox{in} \,\,\, B, \\
& w_{1, \tau_n}-w_{1, \tau}=0 \,\,\, \mbox{in} \,\,\, \R^N \backslash B.
\end{aligned}
\right.
$$
In view of \cite[Theorem 1.1]{SVWZ}, we also have that $\|w_{1, \tau_n}-w_{1, \tau}\|_{\infty} \leq C_1$, where $C_1>0$ is a constant depending on $s$ and $N$. Further, by \cite[Theorem 1.3]{SVWZ}, we get that $\|w_{1, \tau_n}-w_{1, \tau}\|_{C^{1, \alpha}(\overline{B})} \leq C_2$, where $C_2>0$ is a constant depending on $s$ and $N$. As a result of Arzel\`a-Ascoli's theorem, we then derive that $w_{1, \tau_n} \to w_{1, \tau}$ in $C(\overline{B})$ as $n \to \infty$. This then implies that $w_{1, \tau} \to w_{1, \widetilde{\tau}}$ in $C(\overline{B})$ as $\tau \to \widetilde{\tau}$ for any $0 \leq \widetilde{\tau} \leq 1$.

Let $w_{2, \tau} \in H^1_{0,r}(B)$ be an eigenfunction to \eqref{equ22} corresponding to the second eigenvalue $\sigma_{2, \tau}$ in $H^1_{0,r}(B)$.
Without restriction, we may assume that $w_{2, \tau}(0)>0$ and $\|w_{2, \tau}\|_2=1$. On the contrary, if $w_{2, \tau}(0)=0$, by Lemma \ref{schange1}, then $w_{2, \tau}=0$ in $B$.
Reasoning as before, we are also able to conclude that $w_{2, \tau} \to w_{2, \widetilde{\tau}}$ in $C(\overline{B})$ as $\tau \to \widetilde{\tau}$. By Lemma \ref{schange3}, we know that
$$
w_{2,0}(0) \int_{B} w_{2, 0} \,dx<0.
$$
Therefore, by Lemma \ref{schange2}, there exists $\tau_0>0$ such that  $w_{2, \tau}$ changes sign only once in the radial direction for any $0 \leq \tau \leq \tau_0$. Define
$$
\tau_*:= \sup \left\{ 0 \leq \tau_0 \leq 1 : w_{2, \tau} \,\, \mbox{changes sign only once in the radial direction for any} \,\, 0 \leq \tau \leq \tau_0\right\}.
$$
It holds that $\tau_* \leq 1$. If $\tau_*=1$, then the proof is completed. If not, then we assume that $\tau_*<1$. Therefore, we see that $w_{2, \tau_*}$ changes sign only once in the radial direction. In view of Lemma \ref{schange2}, we then get that
$$
w_{2,\tau_*} (0)\int_{B} w_{2, \tau_*} \,dx<0.
$$
On the other hand, we know that there exists $\{\tau_n\} \subset (\tau_*, 1)$ such that $\tau_n \to \tau_*$ as $n \to \infty$ and
$$
w_{2,\tau_n} (0)\int_{B} w_{2, \tau_n} \,dx \geq 0.
$$
This clearly leads to
$$
w_{2,\tau_*} (0)\int_{B} w_{2, \tau_*} \,dx \geq 0.
$$
We then reach a contradiction. It then implies that $\tau_*=1$ and $w_{2,1}$ changes sign only once in the radial direction. Thus the proof is complete.
\end{proof}

\begin{lem} \label{mp}
Let $h \in H^1_0(B)$. Suppose that there exists $0<r_0<1$ such that, for any $r_0<r<1$,
\begin{align} \label{mh}
\int_{B \backslash A_r} \frac{h(y)}{|x-y|^{N+2s}} \,dy \geq 0, \quad \forall \,\, x \in A_r,
\end{align}
where $A_r:=\left\{ x \in B : r<|x|<1 \right\}$. Let $c \in L^{\infty}(B)$ and $r_0<r<1$ be such that $\lambda_{s,1}(A_r)>\|c\|_{\infty}$, where $\lambda_{1,s}(A_r)>0$ is the first eigenvalue of the operator $-\Delta + (-\Delta)^s$  in $A_r$ with the Dirichlet condition defined by 
\begin{align} \label{equar}
 \lambda_{1,s}(A_r):&=\inf_{w \in H^1_{0, r}(A_r) \backslash \{0\}} \frac{\displaystyle\int_{A_r} |\nabla w|^2 \,dx + \int_{\R^N} |(-\Delta)^{\frac s 2} w|^2 \,dx}{\displaystyle \int_{A_r} |w|^2\,dx}.
\end{align}
Let $w \in H^1_0(B)$ be a solution to the problem
\begin{align} \label{equ333}
\left\{
\begin{aligned}
&-\Delta w + \left(-\Delta\right)^s w \geq c(x) w \quad &\mbox{in} \,\,\, A_r, \\
&w=h \quad & \mbox{in} \,\,\, \R^{N} \backslash A_r,
\end{aligned}
\right.
\end{align}
If $w^-\mid_{A_r} \in H^1_0(A_r)$, then $w \geq 0$ in $A_r$. 
\end{lem}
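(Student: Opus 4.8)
\emph{Proposed proof.} The plan is a weak maximum principle argument: use the negative part of $w$ on the annulus as a test function in the weak formulation of \eqref{equ333}, bound each of the three resulting terms, and close the estimate with the spectral gap $\lambda_{1,s}(A_r)>\|c\|_\infty$. Concretely, set $w^-:=\max\{-w,0\}$ and let $\phi:=w^-\chi_{A_r}$, extended by zero outside $A_r$. Since $w^-\mid_{A_r}\in H^1_0(A_r)$ by hypothesis, $\phi$ lies in the energy space associated with $A_r$ (it is in $H^1_0(A_r)$, has $[\phi]_s<\infty$, and vanishes a.e. in $\R^N\setminus A_r$), and $\phi\ge0$, so it is admissible as a test function. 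Testing \eqref{equ333} against $\phi$ produces
$$\int_{A_r}\nabla w\cdot\nabla\phi\,dx+\frac{C(N,s)}{2}\int_{\R^N}\int_{\R^N}\frac{\bigl(w(x)-w(y)\bigr)\bigl(\phi(x)-\phi(y)\bigr)}{|x-y|^{N+2s}}\,dx\,dy\ge\int_{A_r}c(x)\,w\,\phi\,dx.$$

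First I would handle the two routine terms. The local term equals $-\|\nabla\phi\|_2^2$, because $\nabla w\cdot\nabla w^-=-|\nabla w^-|^2$ a.e. on $A_r$ (using $\nabla w=0$ a.e. on $\{w=0\}$). On the right-hand side, $w\,\phi=-\phi^2$ on $\{w<0\}$ and $\phi=0$ elsewhere, so $\int_{A_r}c\,w\,\phi\,dx=-\int_{A_r}c\,\phi^2\,dx\ge-\|c\|_\infty\|\phi\|_2^2$.

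The main work, and the place where hypothesis \eqref{mh} is indispensable, is the nonlocal term. The plan is to split the double integral into the four blocks $A_r\times A_r$, $A_r\times A_r^c$, $A_r^c\times A_r$, $A_r^c\times A_r^c$ with $A_r^c:=\R^N\setminus A_r$. On $A_r\times A_r$, where $\phi=w^-$, the elementary inequality $(a^+-b^+)(a^--b^-)\le0$ for all $a,b\in\R$ gives $\bigl(w(x)-w(y)\bigr)\bigl(\phi(x)-\phi(y)\bigr)\le-\bigl(\phi(x)-\phi(y)\bigr)^2$. On $A_r\times A_r^c$, using $\phi\equiv0$ and $w=h$ on $A_r^c$ together with $w^+w^-=0$, one gets the exact identity $\bigl(w(x)-w(y)\bigr)\bigl(\phi(x)-\phi(y)\bigr)=-\bigl(\phi(x)-\phi(y)\bigr)^2-\phi(x)h(y)$, and symmetrically on $A_r^c\times A_r$; on $A_r^c\times A_r^c$ the integrand is $0$. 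Summing, exploiting the symmetry of the kernel, and noting $h\equiv0$ in $\R^N\setminus B$ so that the leftover $h$-integral runs only over $B\setminus A_r$, I arrive at
$$\frac{C(N,s)}{2}\int_{\R^N}\int_{\R^N}\frac{\bigl(w(x)-w(y)\bigr)\bigl(\phi(x)-\phi(y)\bigr)}{|x-y|^{N+2s}}\,dx\,dy\le-[\phi]_s^2-C(N,s)\int_{A_r}\phi(x)\left(\int_{B\setminus A_r}\frac{h(y)}{|x-y|^{N+2s}}\,dy\right)dx.$$
Since $\phi\ge0$ and, by \eqref{mh}, the inner integral is nonnegative for every $x\in A_r$, the last term is $\le0$, so the nonlocal term is at most $-[\phi]_s^2$.

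Putting the three estimates together in the tested inequality gives $\|\nabla\phi\|_2^2+[\phi]_s^2\le\int_{A_r}c\,\phi^2\,dx\le\|c\|_\infty\|\phi\|_2^2$. On the other hand, $\phi\in H^1_0(A_r)$ (radially symmetric whenever $w$ is, as in the application), so \eqref{equar} yields $\lambda_{1,s}(A_r)\|\phi\|_2^2\le\|\nabla\phi\|_2^2+[\phi]_s^2$. Hence $\bigl(\lambda_{1,s}(A_r)-\|c\|_\infty\bigr)\|\phi\|_2^2\le0$, and since $\lambda_{1,s}(A_r)>\|c\|_\infty$ we get $\|\phi\|_2=0$, i.e. $w^-=0$ in $A_r$, which is the claim. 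I expect the only genuinely delicate point to be the four-block bookkeeping for the nonlocal form — specifically, verifying that the cross terms collapse into exactly the functional $\int_{A_r}\phi(x)\int_{B\setminus A_r}|x-y|^{-N-2s}h(y)\,dy\,dx$ that \eqref{mh} controls, with no surviving contribution of the wrong sign; the rest is the standard Poincar\'e/spectral-gap mechanism.
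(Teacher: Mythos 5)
Your proof is correct and follows essentially the same line as the paper's: test the inequality with $w^-\mid_{A_r}$, decompose the nonlocal bilinear form over the blocks $A_r\times A_r$ and $A_r\times A_r^c$, use the elementary sign inequality to bound the inner block by $-(\phi(x)-\phi(y))^2$, identify the cross term with the kernel average of $h$ that \eqref{mh} controls, and close with the Rayleigh quotient for $\lambda_{1,s}(A_r)$. The only difference is cosmetic (you drop the nonnegative $w^+w^-$ term pointwise via $(a^+-b^+)(a^--b^-)\le0$ while the paper keeps it explicit and discards it after integrating), and your parenthetical caveat about radial symmetry of $\phi$ is a fair observation since \eqref{equar} is taken over $H^1_{0,r}(A_r)$.
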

\begin{proof}
Let $w=w^+-w^-$. Testing \eqref{equ333} by $w^-\mid_{A_r}$ and using \eqref{equar}, we get that
\begin{align*}
-\int_{A_r} c(x) (w^-)^2 \,dx &\leq -\int_{A_r} |\nabla w^-|^2 \,dx + \frac{C(N,s)}{2}\int_{\R^N} \int_{\R^N} \frac{\left(w(x)-w(y)\right)\left(w^-\mid_{A_r}(x)-w^-\mid_{A_r}(y)\right)}{|x-y|^{N+2s}}\,dxdy \\
& \leq  -\int_{A_r} |\nabla w^-|^2 \,dx - \frac{C(N,s)}{2} \int_{\R^N} \int_{\R^N} \frac{\left(w^-\mid_{A_r}(x)-w^-\mid_{A_r}(y)\right)^2}{|x-y|^{N+2s}}\,dxdy \\
& \quad -C(N,s)\int_{A_r} \int_{A_r} \frac{w^+(x)w^-(y)}{|x-y|^{N+2s}}\,dxdy-C(N,s)  \int_{A_r} \int_{\R^N \backslash A_r} \frac{w^-(x)w(y)}{|x-y|^{N+2s}}\,dxdy\\
& \leq -\lambda_{1,s}(A_r) \int_{A_r} (w^-)^2 \,dx-C(N,s) \int_{A_r} \int_{\R^N \backslash A_r} \frac{w^{-}(x)h(y)}{|x-y|^{N+2s}}\,dxdy.
\end{align*}
Since $\lambda_{1,s}(A_r)> \|c\|_{\infty}$, by \eqref{mh}, then $w^-=0$ in $A_r$. This completes the proof.
\end{proof}

\begin{lem} \label{hopf}
Let $w_2 \in H^1_{0, r}(B)$ be an eigenfunction to \eqref{equ1} corresponding to $\sigma_2$. Then it holds that
$$
w_2(0)\liminf_{|x| \nearrow 1} \frac{w_2(x)}{1-|x|}<0.
$$
\end{lem}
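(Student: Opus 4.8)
The plan is to reduce the statement to a classical Hopf boundary estimate and then to obtain the required sign from the weak maximum principle in Lemma \ref{mp}. By Lemma \ref{schange0} the eigenfunction $w_2$ changes sign exactly once in the radial direction, and by Lemma \ref{schange1} this forces $w_2(0)\neq 0$; replacing $w_2$ by $-w_2$ if necessary, I may assume $w_2(0)>0$, so that there is $r_0\in(0,1)$ with $w_2>0$ on $B_{r_0}$ and $w_2<0$ on the annulus $\{r_0<|x|<1\}$. Since $w_2$ solves the linear equation $-\Delta w_2+(-\Delta)^s w_2=(\sigma_2-V)w_2$ with bounded right-hand side, the boundary regularity theory of \cite{SVWZ} applies to $w_2$ as well, so $w_2\in C(\overline B)$ with $w_2=0$ on $\partial B$; in particular $w_2(x)/(1-|x|)<0$ near $\partial B$ and the quantity in the statement is $\le 0$. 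It thus suffices to prove the genuine Hopf estimate $-w_2(x)\ge \eps(1-|x|)$ for $|x|$ close to $1$ and some $\eps>0$.

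To produce this lower bound I would run a comparison on a thin annulus. Put $c:=\sigma_2-V\in L^\infty(B)$ and choose $r_1\in(r_0,1)$ so close to $1$ that $\lambda_{1,s}(A_{r_1})>\|c\|_{L^\infty(B)}$; this is legitimate because $\lambda_{1,s}(A_r)\to+\infty$ as $r\nearrow1$ (the quotient in \eqref{equar} is already controlled from below by the first radial Dirichlet eigenvalue of $-\Delta$ on the thinning annulus, which blows up), while $\|c\|_{L^\infty(B)}$ is fixed. Let $w_1>0$ be the first eigenfunction of \eqref{equ1}; it is radial and, by the moving plane method, strictly decreasing in $|x|$, and $\sigma_2>\sigma_1$ since $\sigma_1$ is simple (Lemma \ref{simple}). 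For small $\eps>0$ set $w:=-w_2-\eps w_1\in H^1_0(B)$. Using the equations for $w_2$ and $w_1$, one gets in $A_{r_1}$
\[
-\Delta w+(-\Delta)^s w=(\sigma_2-V)(-w_2)-\eps(\sigma_1-V)w_1=c\,w+\eps(\sigma_2-\sigma_1)w_1\ge c\,w ,
\]
so $w$ satisfies the differential inequality of Lemma \ref{mp} with this $c$ and $\lambda_{1,s}(A_{r_1})>\|c\|_{L^\infty}$, and $w^-\mid_{A_{r_1}}\in H^1_0(A_{r_1})$ because $w_2,w_1\in C(\overline B)$ vanish on $\partial B$ and $w>0$ on $\{|x|=r_1\}$ for $\eps$ small. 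Verifying the remaining hypothesis of Lemma \ref{mp} amounts to showing that the exterior datum $h=w\mid_{B_{r_1}}=(-w_2-\eps w_1)\mid_{B_{r_1}}$ obeys $\int_{B_{r_1}}\frac{h(y)}{|x-y|^{N+2s}}\,dy\ge 0$ for every $x\in A_{r_1}$. I expect this to hold, for $r_1$ close to $1$ and $\eps$ small, because by Lemma \ref{schange2} one has $\int_B w_2\,dy<0$, and on $B_{r_1}$ the region where $-w_2$ is positive (the ring $\{r_0<|y|<r_1\}$) is closer to $A_{r_1}$ than the region where $-w_2<0$ (the ball $B_{r_0}$), so the negative part is weighted down by the kernel. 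Granting this, Lemma \ref{mp} yields $-w_2-\eps w_1\ge 0$ in $A_{r_1}$.

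To conclude, one needs $\liminf_{|x|\nearrow1}w_1(x)/(1-|x|)>0$. Here $w_1>0$ in $B$, $w_1=0$ outside $B$, $w_1$ is radially decreasing and $w_1\in C^2(B)\cap C(\overline B)$. Since the mass of $w_1$ sits at radii smaller than $|x|$, where $w_1\ge w_1(x)$, the negative contribution to the principal value defining $(-\Delta)^s w_1(x)$ dominates for $x$ close to $\partial B$, giving $(-\Delta)^s w_1\le 0$ on a thin annulus $\{r_2<|x|<1\}$; hence $-\Delta w_1=(\sigma_1-V)w_1-(-\Delta)^s w_1\ge 0$ there, i.e.\ $w_1$ is superharmonic near $\partial B$ while attaining its minimal value $0$ on $\partial B$. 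The classical Hopf lemma for $-\Delta$ (applicable since $w_1\in C^2$ in the interior and is continuous up to $\partial B$) then gives $\partial_\nu w_1<0$ on $\partial B$, that is $\liminf_{|x|\nearrow1}w_1(x)/(1-|x|)>0$. Combining with $-w_2\ge\eps w_1$ in $A_{r_1}$ gives $\liminf_{|x|\nearrow1}(-w_2(x))/(1-|x|)\ge\eps\liminf_{|x|\nearrow1}w_1(x)/(1-|x|)>0$, which together with $w_2(0)>0$ is exactly the claim.

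The step I expect to be the main obstacle is the verification of the nonlocal positivity hypothesis of Lemma \ref{mp}: one must quantify, uniformly for $x$ ranging over the whole of $A_{r_1}$, that the kernel average over $B_{r_1}$ of the sign-changing datum $-w_2-\eps w_1$ stays nonnegative, balancing the small negative values of $w_2$ near $\partial B$ against the order-one positive values of $w_2$ on $B_{r_0}$; this is precisely where Lemma \ref{schange2} and a careful choice of the radii $r_0<r_1$ (and of $\eps$) must be combined. A secondary technical point is to make rigorous the sign statement $(-\Delta)^s w_1\le 0$ near $\partial B$ (equivalently, the superharmonicity of $w_1$ there), which for $s\ge 1/2$ is delicate because $w_1$ fails to be $C^2$ up to the boundary; it is handled using the radial monotonicity of $w_1$ together with the boundary decay estimates for $w_1$.
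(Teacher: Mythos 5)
Your overall strategy is the same as the paper's (reduce to the weak maximum principle of Lemma~\ref{mp} on a thin annulus $A_{r_1}$ and then invoke a Hopf-type estimate), but the proposal has two genuine gaps, the first of which is precisely the heart of the paper's proof.

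\textbf{The kernel positivity hypothesis is not proved, and Lemma~\ref{schange2} alone does not give it.} You yourself flag this as the main obstacle, so let me explain why it is not a mere technicality and what the paper actually does. You need, for $h=(-w_2-\eps w_1)\mid_{B_{r_1}}$ and every $x\in A_{r_1}$, the inequality
$\int_{B_{r_1}}\frac{h(y)}{|x-y|^{N+2s}}\,dy\ge 0$.
Lemma~\ref{schange2} gives the \emph{unweighted} sign $\int_B w_2\,dy<0$; it does not control the kernel-weighted integral, which heavily favours the annulus $\{r_0<|y|<r_1\}$ over the ball $B_{r_0}$. The paper resolves this by first proving the sharper statement
$\int_B \frac{w_2(y)}{|x-y|^{N+2s}}\,dy<0$ for every $x\in\partial B$,
and the proof of that statement is not a soft perturbation of Lemma~\ref{schange2}: it averages the kernel over $\partial B$ to produce a radial, radially \emph{increasing} weight $g(y)=\int_{\partial B}|x-y|^{-N-2s}\,dS$, and then exploits the orthogonality $\langle w_2,w_1\rangle_2=0$ by comparing $g$ with the radially \emph{decreasing} first eigenfunction $w_1$ through the auxiliary function $\widetilde g := g - \frac{g(r_0)}{w_1(r_0)}\,w_1$, which has a sign change matching that of $w_2$. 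That device is what turns the one-sign-change information plus orthogonality into the kernel estimate; without something of this nature your argument does not close. Note also that your specific barrier $-w_2-\eps w_1$ makes this \emph{harder}, not easier: $-\eps w_1<0$ on all of $B_{r_1}$, so it pushes the kernel average in the wrong direction. The paper's barrier $\psi=\varphi+\alpha\widetilde\varphi$ is chosen precisely so that its contribution on $B\setminus A_r$ is a small nonnegative bump supported away from the boundary, which only needs $\eps$ small; its role inside $A_r$ is to produce a Hopf profile with a negative right-hand side after taking $\alpha$ large.

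\textbf{The sub-argument for $\liminf_{|x|\nearrow 1} w_1(x)/(1-|x|)>0$ is incorrect as written.} You assert $(-\Delta)^s w_1\le 0$ on a thin annulus because ``the mass of $w_1$ sits at smaller radii''. This is false in general: since $w_1\equiv 0$ outside $B$, for $x$ near $\partial B$ the principal value integral
$(-\Delta)^s w_1(x)=C(N,s)\,\mathrm{P.V.}\int_{\R^N}\frac{w_1(x)-w_1(y)}{|x-y|^{N+2s}}\,dy$
receives a large \emph{positive} contribution from $y\in\R^N\setminus B$ near $x$, where $w_1(x)-w_1(y)=w_1(x)>0$; for $s>1/2$ and $w_1(x)\sim c\,d(x)$ this contribution scales like $d(x)^{1-2s}\to+\infty$, so in fact $(-\Delta)^s w_1(x)\to+\infty$ and $-\Delta w_1(x)\to-\infty$ as $|x|\nearrow 1$ (consistent with $w_1\in C^{1,\widetilde\alpha}(\overline B)$ but not $C^2(\overline B)$). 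So $w_1$ is not superharmonic near $\partial B$, and the classical Hopf lemma for $-\Delta$ cannot be invoked this way. The clean way to get the Hopf property is to cite the Hopf lemma for the mixed operator directly (\cite[Theorem~2.2]{BM}), which is exactly what the paper does, applied to the torsion function $\varphi$ of the annulus.

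Everything else you set up (normalisation $w_2(0)>0$ via Lemmas~\ref{schange0}--\ref{schange1}, the reduction $-\Delta w+(-\Delta)^s w\ge c\,w$ in $A_{r_1}$ using $\sigma_2>\sigma_1$, the choice of $r_1$ so that $\lambda_{1,s}(A_{r_1})>\|c\|_\infty$, and the structure of the final estimate) is sound and parallels the paper. Filling the two gaps would turn this into a complete argument, and the first gap requires the $\widetilde g$ device (or an equivalent idea), not merely a quantitative refinement of Lemma~\ref{schange2}.
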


\begin{proof}
First we claim that
\begin{align} \label{w2}
w_2(0)\int_B \frac{w_2(y)}{|x-y|^{N+2s}} \,dy<0, \quad \forall\,\, x \in \partial B.
\end{align}
Indeed, by Lemma \ref{schange0}, we know that $w_2$ changes sign precisely once in the radial direction. 
Without restriction, we may assume that $w_2(0)>0$ and there exists $0<r_0<1$ such that $w_2(r) \geq 0$ for $0<r<r_0$ and $w_2(r) \leq 0$ for $r_0<r<1$. Since $w_2$ is radially symmetric, then
\begin{align*}
\int_B \frac{w_2(y)}{|x-y|^{N+2s}} \,dy&=\frac{1}{\omega_{N-1}} \int_{\partial B} \int_B  \frac{w_2(y)}{|x-y|^{N+2s}} \,dy dS \\
&=\frac{1}{\omega_{N-1}}\int_B  \int_{\partial B} \frac{w_2(y)}{|x-y|^{N+2s}} \,dS dy=\frac{1}{\omega_{N-1}} \int_B w_2 g dy,
\end{align*}
where 
$$
g(y):=\int_{\partial B} \frac{1}{|x-y|^{N+2s}} \,dS.
$$
It follows from the proof of \cite[Proposition 4.5]{DP} that $g$ is radially symmetric and increasing in the radial direction. Moreover, it satisfies that $g(0)>0$ and $\lim_{|y| \to 1^-} g(y)=+\infty$. Let $w_1 \in H^1_{0,r}(B)$ be the eigenfunction to \eqref{equ1} corresponding to the first eigenvalue $\sigma_1$. It holds that $w_1>0$ in $B$ and $\langle w_1, w_2 \rangle_2=0$. In addition, we know that $w_1$ is strictly decreasing in the radial direction. Define
$$
\widetilde{g}:=g-\frac{g(r_0)}{w_1(r_0)} w_1.
$$
It follows that $\widetilde{g} \leq 0$ for $0<r<r_0$ and $\widetilde{g} \geq 0$ for $r_0<r<1$. Therefore, we conclude that
\begin{align*}
\int_{B} w_2 g \,dy=\int_{B} w_2 \widetilde{g} \,dy=\int_{B}\left(w_2^+-w_2^-\right) \widetilde{g} \,dy=\int_{B_{r_0}}w_2^+ \widetilde{g} \,dy-\int_{B\backslash B_{r_0}} w_2^-\widetilde{g} \,dy<0.
\end{align*}
This implies that \eqref{w2} holds true. 

In the sequel, without restriction, we shall always assume that $w_2(0)>0$. Therefore, by \eqref{w2}, there exists $r_0<r_1<1$ such that, for any $r_1<r<1$,
\begin{align} \label{w222}
\int_{B \backslash A_r} \frac{w_2(y)}{|x-y|^{N+2s}} \,dy<0, \quad \forall \,\, x \in \partial B,
\end{align}
where $A_{r}:=\left\{ x \in B : r<|x|<1 \right\}$. It follows that $w_2 \leq 0$ in $A_{r_1}$, because $w_2(r) \leq 0$ for $r_0<r<1$. Define $c(x):=\sigma_2-V(x)$ for $x \in B$. Observe that $\lim_{r \to 1^-}\lambda_{1,s}(A_r)=+\infty$ by \cite[Corollary 1.2]{KT}. Therefore, we are able to choose $0<r_1<1$ closer to $1$ if necessary such that $\lambda_{1,s}(A_r)> \|c\|_{\infty}$ for any $r_1<r<1$. Let $\varphi \in H^1_0(A_r)$ be the positive solution to the equation
$$
-\Delta \varphi+ (-\Delta)^s \varphi=1 \quad \mbox{in} \,\,\, A_r,  \quad \varphi=0 \,\,\, \mbox{in} \,\,\, \R^N \backslash A_r.
$$ 
Let $0<\widetilde{r}<1$ be such that $B_{\widetilde{r}} \cap A_r=\emptyset$. Let $\widetilde{\varphi} \in C^{\infty}_0(B_{\widetilde{r}})$ be non-negative such that $\widetilde{\varphi} \geq 1$ in $B_{\widetilde{r}/2}$. Define $\psi:=\varphi + \alpha \widetilde{\varphi}$, where $\alpha>0$ is a constant to be determined later. Let $\phi \in H^1_0(A_r)$ be non-negative. Consequently, we conclude that
\begin{align} \label{wmp}
\begin{split}
&\int_{A_r} \nabla \psi \cdot \nabla \phi \,dx+\frac{C(N,s)}{2}\int_{\R^N} \int_{\R^N} \frac{\left(\psi(x)-\psi(y)\right)\left(\phi(x)-\phi(y)\right)}{|x-y|^{N+2s}}\,dxdy \\
&=\int_{A_r} \nabla \varphi \cdot \nabla \phi \,dx+\frac{C(N,s)}{2}\int_{\R^N} \int_{\R^N} \frac{\left(\varphi(x)-\varphi(y)\right)\left(\phi(x)-\phi(y)\right)}{|x-y|^{N+2s}}\,dxdy \\
& \quad + \frac{\alpha C(N,s)}{2} \int_{\R^N} \int_{\R^N} \frac{\left(\widetilde{\varphi}(x)-\widetilde{\varphi}(y)\right)\left(\phi(x)-\phi(y)\right)}{|x-y|^{N+2s}}\,dxdy \\
&= \int_{A_r} \phi \,dx + \frac{\alpha C(N,s)}{2}  \int_{\R^N} \int_{\R^N} \frac{\left(\widetilde{\varphi}(x)-\widetilde{\varphi}(y)\right)\left(\phi(x)-\phi(y)\right)}{|x-y|^{N+2s}}\,dxdy \\
&=\int_{A_r} \phi \,dx - \alpha C(N,s) \int_{A_r} \int_{B_{\widetilde{r}}} \frac{\phi(x)\widetilde{\varphi}(y)}{|x-y|^{N+2s}}\,dydx \\
& \leq \int_{A_r} \phi \,dx -  \alpha C(N,s)  \int_{A_r} \int_{B_{\widetilde{r}/2}} \frac{\phi(x)}{|x-y|^{N+2s}}\,dydx \\
& \leq \left(1-\alpha C\right)  \int_{A_r} \phi \,dx,
\end{split}
\end{align}
where we used the fact that $|x-y|>\widetilde{r}/2$ for any $x \in A_r $ and $y \in B_{\widetilde{r}/2}$ for the last inequality and $C=C(N,s, \widetilde{r})>0$.
At this point, we are able to take $\alpha>0$ large enough such that $1-\alpha C \leq -\|c\|_{\infty} \psi$, because of $\psi \geq 0$ and $\psi \in L^{\infty}(A_r)$. It then follows from \eqref{wmp} that
\begin{align} \label{equ1111}
-\Delta \psi + (-\Delta)^s \psi \leq - \|c\|_{\infty} \psi \leq c(x) \psi \quad \mbox{in} \,\,\, A_r,  \quad \psi \geq 0 \,\,\, \mbox{in} \,\,\R^N \backslash A_r.
\end{align}
Furthermore, in view of \eqref{w222}, we find that there exists $\eps>0$ small enough such that
$$
\int_{B \backslash A_r} \frac{w_2(y)+\eps \psi(y)}{|x-y|^{N+2s}} \,dy<0, \quad \forall \,\, x \in A_r.
$$
Combining \eqref{equ1} and \eqref{equ1111}, we conclude that
$$
-\Delta (w_2+\eps \psi)+ (-\Delta)^s  (w_2+\eps \psi) \leq  c(x)   (w_2+\eps \psi) \quad \mbox{in} \,\,\, A_r.
$$
It then follows from Lemma \ref{mp} that $w_2 \leq -\eps \psi=-\eps \varphi$ in $A_r$. As a consequence, by Hopf's lemma (see \cite[Theorem 2.2]{BM}), it holds that
$$
\liminf_{|x| \nearrow 1} \frac{w_2(x)}{1-|x|} \leq \eps \liminf_{|x| \nearrow 1} \frac{-\varphi(x)}{1-|x|}<0.
$$
Thereby, we have the desired conclusion and the proof is complete.
\end{proof}

\begin{proof}[Proof of Theorem \ref{thm1}]
Combining Lemma \ref{schange1}, Lemma \ref{schange0} and Lemma \ref{hopf} together with the fact that $w_2^+=(w_2^+)^*$ from the proof of Lemmas \ref{schange1}, we then derive the desired conclusions. This completes the proof.
\end{proof}

\begin{lem} \label{sign}
Let $\sigma_2$ be the second eigenvalue to \eqref{equ1} in $H^1_{0,r}(B)$. Then it holds that $\sigma_2 \neq -\lambda$. 
\end{lem}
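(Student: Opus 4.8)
The plan is to argue by contradiction: assume $\sigma_2=-\la$. By Theorem~\ref{thm1} the second eigenvalue is simple, so there is an eigenfunction $w_2\in H^1_{0,r}(B)$ of \eqref{equ1} with $\sigma=\sigma_2=-\la$, unique up to normalisation, and we may take $w_2(0)>0$, $\|w_2\|_2=1$. Theorem~\ref{thm1} then supplies $r_0\in(0,1)$ with $w_2>0$ in $B_{r_0}$, $w_2<0$ in $B\setminus\overline{B_{r_0}}$, $w_2\mid_{B_{r_0}}$ radially decreasing, and the Hopf type property $\liminf_{|x|\nearrow1}\tfrac{w_2(x)}{1-|x|}<0$. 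Recall also that $u>0$ is radially decreasing, $u\in C^{1,\widetilde\alpha}(\overline B)\cap C^2(B)$, and Hopf's lemma for the mixed operator (\cite[Theorem~2.2]{BM}) gives $\liminf_{|x|\nearrow1}\tfrac{u(x)}{1-|x|}>0$. Writing $c:=(p-1)u^{p-2}-\la\in L^\infty(B)$, the hypothesis $\sigma_2=-\la$ means precisely that $-\De w_2+(-\De)^sw_2=c\,w_2$ in $B$, while \eqref{equ} reads $-\De u+(-\De)^su=c\,u-(p-2)u^{p-1}\le c\,u$ in $B$.

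The first step is to record the orthogonality relation $\int_B u^{p-1}w_2\,dx=0$. I would obtain it by testing \eqref{equ} against $w_2$ and \eqref{equ1} (with $\sigma=\sigma_2=-\la$) against $u$ and subtracting: since $u$ and $w_2$ both vanish outside $B$ and lie in the energy space, the symmetric local and nonlocal pairings cancel, leaving only $\bigl(1-(p-1)\bigr)\int_B u^{p-1}w_2=0$, so the integral vanishes because $p>2$. Equivalently, $w_2$ is tangent at $u$ to the Nehari manifold $N$; as $u$ minimises $I$ on $N$ we have $\langle I''(u)\varphi,\varphi\rangle\ge0$ for every such $\varphi$, whereas the eigenfunction identity gives $\langle I''(u)w_2,w_2\rangle=(\sigma_2+\la)\|w_2\|_2^2=0$, so $w_2$ is a degenerate direction for the constrained minimisation defining $u$. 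This is the point at which the argument follows the scheme of \cite[Theorem~5.2]{FW1}.

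The contradiction is then reached by a comparison argument in which \eqref{hopf11} is decisive, in the spirit of the proof of Lemma~\ref{hopf}. Consider the family $v_t:=u+tw_2$, $t\ge0$. Since $v_0=u>0$ in $B$ with $u\ge c_0(1-|x|)$ near $\pa B$ by Hopf's lemma and $|w_2|\le C(1-|x|)$ in $B$ (as $w_2\in C^1(\overline B)$ vanishes on $\pa B$), there is a largest $T\in(0,\infty)$ with $v_t\ge0$ in $B$ for all $t\in[0,T]$, finiteness following from $w_2<0$ somewhere. Maximality forces $v_T$ to touch zero, and since $v_T>0$ in $B_{r_0}$ this occurs either only at $\pa B$ with $\tfrac{v_T}{1-|\cdot|}\to0$ there (because $\tfrac{u}{1-|\cdot|}$ and $\tfrac{w_2}{1-|\cdot|}$ have finite boundary limits), or on an interior sphere $\{|x|=\rho\}$ with $r_0<\rho<1$. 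In the boundary case I would absorb the non-negative nonlocal tail of $v_T$ coming from $B_r$ into the source term and apply the weak maximum principle of Lemma~\ref{mp} in an annulus $A_r$ near $\pa B$, together with the torsion-function barrier of the proof of Lemma~\ref{hopf}, to obtain $v_T\ge\eps\varphi$ in $A_r$ for some $\eps>0$ and $\varphi$ the first torsion function of $-\De+(-\De)^s$ in $A_r$; Hopf's lemma applied to $\varphi$ then contradicts $\tfrac{v_T}{1-|\cdot|}\to0$. The interior case I would treat analogously, localising to the annulus $A_\rho$ (on which $v_T\ge0$ vanishes on both boundary components, has non-negative exterior data on $B_\rho$, and solves $-\De v_T+(-\De)^sv_T-c\,v_T=-(p-2)u^{p-1}$), which together with a strong maximum principle and unique continuation is incompatible with $v_T$ having an interior zero unless $v_T\equiv0$ — absurd, since $u>0$ while $w_2$ changes sign.

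I expect the main obstacle to be precisely this boundary analysis: because the mixed operator is not scale invariant and no fractional integration-by-parts formula is available here, one cannot read off the boundary decay of $v_T$ directly. Instead one has, as in the proof of \eqref{hopf11}, to transfer the contribution of $v_T$ on $B_r$ into the right-hand side, run the weak maximum principle in the annulus $A_r$, and design an explicit positive barrier there, being careful with all signs (via the positivity statement \eqref{w2}) so that the comparison runs in the correct direction.
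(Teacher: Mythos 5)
Your proposal takes a genuinely different route from the paper. The paper multiplies the linearized equation against cutoffs $\zeta_k(x)=1-\chi(k(1-|x|))$, integrates over $\R^N$, passes to the limit $k\to\infty$ while tracking the boundary contribution, and arrives at the identity
\[
\omega_{N-1}\Bigl(1+\int_1^2\rho\,\chi''(\rho)\,d\rho\Bigr)\,\liminf_{|x|\nearrow 1}\frac{w_2(x)}{1-|x|}=0,
\]
which contradicts Lemma~\ref{hopf} once $\chi$ is chosen with $\bigl|\int_1^2\rho\,\chi''(\rho)\,d\rho\bigr|<1$. You instead slide the family $v_t:=u+tw_2$ to the first $T$ at which nonnegativity breaks. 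Your preliminary observations (simplicity of $\sigma_2$, the sign pattern of $w_2$, the Hopf derivative, the orthogonality $\int_B u^{p-1}w_2\,dx=0$) are correct, and the boundary case (a) can in principle be closed with the barrier of Lemma~\ref{hopf}, choosing $r$ close to $1$ so that $(p-2)\sup_{A_r}u^{p-1}$ is small and then picking $\eps$ in a nonempty window.

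The interior case (b), however, contains a genuine gap, and it is not the one you anticipate. Subtracting the linearized equation from \eqref{equ} and writing $c:=(p-1)u^{p-2}-\lambda$, one has for every $t\ge 0$
\[
-\Delta v_t+(-\Delta)^s v_t-c\,v_t=-(p-2)u^{p-1}<0\quad\text{in }B,
\]
so $v_T$ is a \emph{strict subsolution} of the linearized operator $-\Delta+(-\Delta)^s-c$, not a supersolution. The strong maximum principle that forbids a nonnegative function from having an interior zero applies to supersolutions; for a subsolution there is nothing to contradict. Indeed, at an interior radius $\rho\in(r_0,1)$ with $v_T(\rho)=0$, one has $-\Delta v_T(\rho)\le 0$ (interior minimum) and
\[
(-\Delta)^s v_T(\rho)=-C(N,s)\int_{\R^N}\frac{v_T(y)}{|\rho e_1-y|^{N+2s}}\,dy<0
\]
since $v_T>0$ on $B_{r_0}$, so the left-hand side is already negative at $\rho$, perfectly compatible with the right-hand side $-(p-2)u^{p-1}(\rho)<0$. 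Hopf's lemma and unique continuation likewise point in the wrong direction here. This is precisely the difficulty the paper's integrated identity is built to sidestep: by testing globally against cutoffs rather than comparing pointwise, it extracts the single boundary number $\liminf_{|x|\nearrow1}\frac{w_2(x)}{1-|x|}$ directly and never needs to exclude a first touching point in the interior.
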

\begin{proof}
We shall argue by contradiction that $\sigma_2=-\lambda$. This then indicates that there exists $w \in H^1_{0, r}(B)\backslash \{0\}$ such that
\begin{align} \label{equ110}
-\Delta w+ (-\Delta)^s w-(p-1)u^{p-2}w=-\lambda w \quad \mbox{in} \,\,\, B, \quad w=0 \quad \mbox{in} \,\, \,\R^N \backslash B.
\end{align}
By Lemma \ref{schange1}, we know that $w(0) \neq 0$. Without restriction, we may assume that $w(0)>0$. Moreover, in view of Lemma \ref{hopf}, we have that $\psi_{w}(1)<0$, where
$$
\psi_w(1):=\liminf_{|x| \nearrow 1} \frac{w(x)}{1-|x|}.
$$ 
Let $\chi \in C^{\infty}_0(\R)$ be such that $0 \leq \chi \leq 1$ in $\R$, $\chi=1$ in $[-1, 1]$ and $\chi=0$ in $ \R \backslash (-2, 2)$. 
Define $\zeta_k(x):=1-\chi(k(1-|x|))$ for $x \in \R^N$. It then follows from \eqref{equ110} that 
\begin{align*}
-\Delta(\zeta_k w) + (-\Delta)^s(\zeta_k w)&=\zeta_k \left(-\Delta w +  (-\Delta)^s w\right) + w \left(-\Delta \zeta_k +  (-\Delta)^s \zeta_k \right) -2 \nabla w \cdot \nabla \zeta_k-I(w, \zeta_k) \\
&=\zeta_k \left(-\lambda w + (p-1) u^{p-2} w\right) +w\left(-\Delta \zeta_k +  (-\Delta)^s \zeta_k \right) -2 \nabla w \cdot \nabla \zeta_k-I(w, \zeta_k),
\end{align*}
where
$$
I(w, \zeta_k):=\frac{C(N, s)}{2} \int_{\R^N} \frac{(w(x)-w(y))(\zeta_k(x)-\zeta_k(y))}{|x-y|^{N+2s}}\,dy.
$$
Invoking the divergence theorem, we know that
$$
\int_{\R^N \backslash \overline{B}}-\Delta(\zeta_k w) \,dx=0.
$$
As a consequence, we have that
\begin{align} \label{w0}
\int_{\R^N}-\Delta(\zeta_k w) + (-\Delta)^s(\zeta_k w) \,dx \nonumber
&=\int_{B}-\Delta(\zeta_k w) + (-\Delta)^s(\zeta_k w) \,dx + \int_{\R^N \backslash  \overline{B}}-\Delta(\zeta_k w) + (-\Delta)^s(\zeta_k w) \,dx \\ \nonumber
&=\int_{B} \zeta_k \left(-\lambda w + (p-1) u^{p-2} w\right) \,dx + \int_{B} w \left(-\Delta \zeta_k \right) -2 \nabla w \cdot \nabla \zeta_k\,dx \\
& \quad + \int_{B}w(-\Delta)^s \zeta_k-I(w, \zeta_k) \,dx + \int_{\R^N \backslash  \overline{B}}(-\Delta)^s(\zeta_k w) \,dx.
\end{align}
It is simple to find that
\begin{align} \label{w01}
\int_{\R^N}-\Delta(\zeta_k w) + (-\Delta)^s(\zeta_k w) \,dx=0.
\end{align}
In what follows, we shall deal with the terms in the right hand side of \eqref{w0}. To begin with, since $0 \leq \zeta_k \leq 1$ and $\lim_{k \to \infty} \zeta_k(x)=1$ for any $x \in B$, by the dominated convergence theorem, then we have that
\begin{align} \label{w02}
\lim_{k \to \infty}\int_{B} \zeta_k \left(-\lambda w + (p-1) u^{p-2} w\right) \,dx=\int_{B} -\lambda w + (p-1) u^{p-2} w \,dx.
\end{align}
Let us next treat the last term in the right hand side of \eqref{w0}. Since $w \in L^{\infty}(B)$, by \cite[Lemma 2.1]{BM}, then $|w(x)| \leq C (1-|x|)$ for any $x \in B$. It then leads to
\begin{align*} 
|w(x)| \leq C |x-y|, \quad \forall \,\, x \in B, y \in \R^N \backslash \overline{B}.
\end{align*}
Therefore, we know that
\begin{align} \label{cw2}
|w(x)| \leq C |x-y| \leq 3^{1-s} C |x-y|^s, \quad \forall \,\, x \in B, y \in B_2 \backslash \overline{B}.
\end{align}
Observe that, for any $x \in \R^N \backslash \overline{B}$,
\begin{align} \label{w11}
\left|(-\Delta)^s(\zeta_k w)(x) \right| &=C(N, s) \left|\int_{\R^N} \frac{\zeta_k(y) w(y)}{|x-y|^{N+2s}} \,dy\right| =C(N, s) \left| \int_{B} \frac{\zeta_k(y) w(y)}{|x-y|^{N+2s}} \,dy \right|,
\end{align}
where we used the fact that $w=0$ in $\R^N \backslash B$.
Noting that $\|\zeta_k w\|_{\infty} \leq \|w\|_{\infty}$ and utilizing \eqref{cw2}, we then have that
\begin{align} \label{w12}
\left|(-\Delta)^s(\zeta_k w)(x) \right|  \leq \widetilde{C} \int_{B} \frac{1}{|x-y|^{N+s}} \,dy, \quad \forall \,\, x \in B_2 \backslash \overline{B},
\end{align}
\begin{align} \label{w13}
\left|(-\Delta)^s(\zeta_k w)(x) \right|  \leq \widetilde{C} \int_{B} \frac{1}{|x-y|^{N+2s}} \,dy, \quad \forall \,\, x \in \R^N \backslash B_2,
\end{align}
where $\widetilde{C}=\widetilde{C}(N, s, \lambda)>0$.
Let us first estimate \eqref{w12}. Notice that, for any $x \in B_2 \backslash \overline{B}$,
\begin{align*}
\int_{B} \frac{1}{|x-y|^{N+s}} \,dy&=\int_0^1 \int_{\partial B} \frac{\rho^{N-1}}{|x-\rho y|^{N+s}} \,dS d\rho
=\int_0^1 \int_{\partial B} \frac{\rho^{N-1}}{||x|e_1-\rho y|^{N+s}} \,dS d\rho \\
&=\int_0^1 \int_{\partial B} \frac{\rho^{N-1}}{||x|^2+\rho^2-2|x| \rho y_1|^{\frac{N+s}{2}}} \,dS d\rho \\
&=\frac{2 \pi^{\frac{N-1}{2}}}{\Gamma\left(\frac{N-1}{2}\right)}\int_0^1 \int_0^{\pi} \frac{\rho^{N-1} \sin^{N-2}\theta}{\left||x|^2+\rho^2-2|x| \rho \cos \theta\right|^{\frac{N+s}{2}}} \,d\theta d\rho \\
&=\frac{2 \pi^{\frac{N-1}{2}}}{\Gamma\left(\frac{N-1}{2}\right)|x|^{N+s}} \int_0^1 {}_2 F_1 \left(\frac{N+s}{2}, \frac s 2 +1; \frac{N}{2}; \frac{\rho^2}{|x|^2}\right) \rho^{N-1}\,d\rho,
\end{align*}
where ${}_2 F_1$ is the hypergeometric function (see \cite[Section 2.5.1]{MOS}) defined by
$$
{}_2 F_1(a, b; c; z)=\frac{\Gamma(c)}{\Gamma(a) \Gamma(b)}  \sum_{k=0}^{\infty} \frac{\Gamma(a+k) \Gamma(b+k)}{\Gamma(c+k)} \frac{z^k}{k!}.
$$
From \cite[Chapter 15]{OD}, we know that
\begin{align*}
{}_2 F_1 \left(\frac{N+s}{2}, \frac s 2+1; \frac{N}{2}; \frac{\rho^2}{|x|^2}\right) 
 \sim \left(1-\frac{\rho^2}{|x|^2}\right)^{-1-s} \frac{\Gamma\left(\frac N 2\right)\Gamma(1+s)}{\Gamma\left(\frac{N+s}{2}\right)\Gamma\left(1+\frac s 2\right)} \quad  \mbox{as} \,\,\,\frac{\rho}{|x|} \to 1^-.
\end{align*}
It then follows that 
$$
{}_2 F_1 \left(\frac{N+s}{2}, \frac s 2+1; \frac{N}{2}; \frac{\rho^2}{|x|^2}\right) \leq C \left(1-\frac{\rho^2}{|x|^2}\right)^{-1-s}, \quad \forall \,\, x \in B_2 \backslash \overline{B}.
$$
As a consequence, it holds that
\begin{align} \label{w14}
\begin{split}
\int_{B} \frac{1}{|x-y|^{N+s}} \,dy &\leq C \int_0^1  \left(1-\frac{\rho^2}{|x|^2}\right)^{-1-s} \rho^{N-1} \,d\rho \leq C \int_0^1  \left(1-\frac{\rho}{|x|}\right)^{-1-s} \,d\rho \\
&=\frac{C|x|}{s} \left(\left(1-\frac{1}{|x|}\right)^{-s} -1\right)\leq \frac{2^{1+s}C} {s(|x|-1)^{s}}, \quad \forall \,\, x \in B_2 \backslash \overline{B}.
\end{split}
\end{align}
In addition, since $|x-y| \geq |x|-1 \geq \frac 13 (|x|+1)$ for $x \in \R^N \backslash B_2$ and $y \in B$, then 
\begin{align}  \label{w15}
\int_{B} \frac{1}{|x-y|^{N+2s}} \,dy \leq \frac{C} {(1+|x|)^{N+2s}}, \quad \forall \,\, x \in \R^N \backslash B_2.
\end{align}
Coming back to \eqref{w11} and using \eqref{w12}, \eqref{w13}, \eqref{w14} and \eqref{w15}, we then get that
\begin{align} \label{w16}
\left|(-\Delta)^s(\zeta_k w)(x) \right| \leq C \left(\frac{1}{(|x|-1)^{s}} \chi_{B_2 \backslash \overline{B}}(x) + \frac{1}{(1+|x|)^{N+2s}}\chi_{\R^N \backslash B_2}(x)\right), \quad \forall \,\, x \in \R^N \backslash B.
\end{align}
Since $w(x)=0$ for any $x \in \R^N \backslash B$, $0 \leq \zeta_k \leq 1$ and $\lim_{k \to \infty} \zeta_k(x)=1$ for any $x \in B$, by \eqref{w11}, \eqref{w16} and the dominated convergence theorem, then
$$
\lim_{k \to \infty}(-\Delta)^s(\zeta_k w)(x)=(-\Delta)^s w(x), \quad \forall \,\, x \in \R^N \backslash \overline{B},
$$
\begin{align} \label{w05}
\lim_{k \to \infty}\int_{\R^N \backslash \overline{B}} (-\Delta)^s(\zeta_k w) \,dx=\int_{\R^N \backslash \overline{B}} (-\Delta)^s w \,dx.
\end{align}

Next, we are going to estimate the remaining terms in the right hand side of \eqref{w0}. Let $0<\eps<1$. We now write
\begin{align} \label{intb}
\int_{B}w(-\Delta)^s \zeta_k-I(w, \zeta_k) \,dx=\int_{B_{1-\eps}} w(-\Delta)^s \zeta_k-I(w, \zeta_k) \,dx + \int_{B \backslash B_{1-\eps}} w(-\Delta)^s \zeta_k-I(w, \zeta_k) \,dx.
\end{align}
Let $k \in \N^+$ be large enough. It then follows that
\begin{align} \label{w001}
(-\Delta)^s \zeta_k(x)=C(N, s) \int_{\R^N \backslash K} \frac{1- \zeta_k(y)}{|x-y|^{N+2s}} \,dy, \quad \forall \,\, x \in B_{1-\eps},
\end{align}
where $K \subset B$ is a compact neighborhood of $B_{1-\eps}$. Observe that
$$
\frac{|1- \zeta_k(y)|}{|x-y|^{N+2s}} \leq \frac{C}{1+|y|^{N+2s}}, \quad \forall \,\, x\in B_{1-\eps}, y \in \R^N \backslash K.
$$
This then implies that $\|(-\Delta)^s \zeta_k\|_{L^{\infty}(B_{1-\eps})} \leq C$. Invoking \eqref{w001} and the dominated convergence theorem, we then get that
$$
\lim_{k \to \infty} (-\Delta)^s \zeta_k(x)=0, \quad \forall \,\, x\in B_{1-\eps}.
$$
Further, by the dominated convergence theorem, we are able to derive that
\begin{align} \label{w21}
\lim_{k \to \infty} \int_{B_{1-\eps}} w(-\Delta)^s \zeta_k \,dx=0.
\end{align}
Similarly, we are able to show that
\begin{align} \label{w22}
\lim_{k \to \infty} \int_{B_{1-\eps}} I(w, \zeta_k) \,dx=0.
\end{align}
We now handle the second term in the right hand side of \eqref{intb}. Since the functions $w$, $(-\Delta)^s \zeta_k$ and $I(w, \zeta_k) $ are radially symmetric, then
\begin{align} \label{w100}
\begin{split}
&\int_{B \backslash B_{1-\eps}} w (-\Delta)^s \zeta_k-I(w, \zeta_k) \,dx \\
&= \omega_{N-1}\int_0^{\eps} \left(w(1-\rho)(-\Delta)^s \zeta_k(1-\rho)-I(w, \zeta_k)(1-\rho) \right) (1-\rho)^{N-1} \,d\rho \\
&= \frac {\omega_{N-1}} {k} \int_0^{\eps k} \left(w(1-k^{-1}\rho)(-\Delta)^s \zeta_k(1-k^{-1}\rho)-I(w, \zeta_k)(1-k^{-1}\rho) \right) (1-k^{-1}\rho)^{N-1} \,d\rho,
\end{split}
\end{align}
where $\omega_{N-1}$ denotes the surface of the unit sphere in $\R^N$.
Define
$$
G(x, \rho):=\left(w(1-k^{-1}\rho)(-\Delta)^s \zeta_k(1-k^{-1}\rho)-I(w, \zeta_k)(1-k^{-1}\rho) \right) (1-k^{-1}\rho)^{N-1}.
$$
Since $|w(x)| \leq C(1-|x|)$ for any $x \in B$ by \cite[Lemma 2.1]{BM}, then 
$$
k^s |w(1-k^{-1}\rho)| \leq C k^{s-1} \rho \leq C \rho, \quad \forall \,\, 0 <\rho<k \eps.
$$
In view of \cite[Proposition 6.3 $\&$ Lemma 6.8]{DFW}, we then get that
\begin{align} \label{intg}
\frac{|G(x, \rho)|}{k} \leq C \left(\frac{k^{s-1} \rho} {1+\rho^{1+2s}} + \frac{k^{s-1}} {1+\rho^{1+s}}\right) \leq C\left(\frac{\rho^{s}} {1+\rho^{1+2s}} + \frac{\rho^{s-1}} {1+\rho^{1+s}}\right), \quad \forall \,\, 0 <\rho<k \eps,
\end{align}
which leads to
$$
\lim_{k \to \infty}\frac{G(x, \rho)}{k}=0.
$$
As a result of \eqref{w100}, \eqref{intg} and the dominated convergence theorem, we then have that
\begin{align*}
\lim_{k \to \infty }\int_{B \backslash B_{1-\eps}} \left|w (-\Delta)^s \zeta_k-I(w, \zeta_k) \right| \,dx=0.
\end{align*}
This jointly with \eqref{w21} and \eqref{w22} infers that
\begin{align} \label{w03}
\lim_{k \to \infty}\int_{B}w(-\Delta)^s \zeta_k-I(w, \zeta_k) \,dx=0.
\end{align}

Let us now estimate the last remaining term in the right hand side of \eqref{w0}. It is simple to compute that
\begin{align} \label{deffd}
-\nabla \zeta_k(x)=\nabla (\chi(k(1-|x|)))=-k \chi'(k(1-|x|)) \frac{x}{|x|},
\end{align}
\begin{align} \label{defsd}
-\Delta \zeta_k(x)=\Delta (\chi(k(1-|x|)))=k^2 \chi''(k(1-|x|))-\frac{k(N-1)}{|x|}\chi'(k(1-|x|)).
\end{align}
Let $k \in \N^+$ be large enough. Therefore, we see that $\nabla \zeta_k(x)=0$ and $-\Delta \zeta_k(x)=0$ for any $x \in B_{1-\eps}$. This immediately leads to
\begin{align} \label{w31}
\int_{B_{1-\eps}} w \left(-\Delta \zeta_k \right) -2 \nabla w \cdot \nabla \zeta_k\,dx=0.
\end{align}
Moreover, by the divergence theorem, we have that
\begin{align} \label{intw}
\begin{split}
\int_{B \backslash B_{1-\eps}} w \left(-\Delta \zeta_k \right) -2 \nabla w \cdot \nabla \zeta_k \,dx&=\int_{B \backslash B_{1-\eps}} w \Delta \zeta_k\,dx\\
&=\omega_{N-1} \int_0^{\eps} w(1-\rho)\Delta \zeta_k(1-\rho)(1-\rho)^{N-1}\,d\rho\\
&=\frac {\omega_{N-1}} {k} \int_0^{\eps k} w(1- k^{-1}\rho)\Delta \zeta_k(1-k^{-1}\rho)(1-k^{-1}\rho)^{N-1}\,d\rho.
\end{split}
\end{align}
Since $kw(1- k^{-1}\rho) \leq C \rho$ by \cite[Lemma 2.1]{BM},  $\mbox{supp} \,\chi'  \subset [1, 2]$ and $\mbox{supp} \, \chi'' \subset [1, 2]$, by \eqref{defsd}, then
$$
\frac 1 k \left|w(1- k^{-1}\rho) \Delta \zeta_k(1-k^{-1}\rho)\right| \leq \frac{C\rho}{1+\rho^3}, \quad \forall \,\, 0 < \rho<k \eps,
$$
In addition, 
by \eqref{defsd}, we have that
$$
\lim_{k \to \infty }\frac{w(1- k^{-1}\rho)\Delta \zeta_k(1-k^{-1}\rho)(1-k^{-1}\rho)^{N-1}}{k}=\psi_{w}(1) \rho \chi''(\rho).
$$
Invoking the dominated convergence theorem, we then obtain that
$$
\lim_{k \to \infty} \int_{B \backslash B_{1-\eps}} w \left(-\Delta \zeta_k \right) -2 \nabla w \cdot \nabla \zeta_k \,dx=\omega_{N-1} \psi_{w}(1) \int_0^{+\infty} \rho \chi''(\rho) \,d\rho.
$$
Combining this with \eqref{w31} and the fact that $\mbox{supp} \, \chi'' \subset [1, 2]$, we then know that
\begin{align} \label{w04}
\lim_{k \to \infty} \int_{B} w \left(-\Delta \zeta_k \right) -2 \nabla w \cdot \nabla \zeta_k \,dx=\omega_{N-1} \psi_{w}(1) \int_1^{2} \rho \chi''(\rho) \,d\rho=\omega_{N-1} \psi_{w}(1).
\end{align}
At this point, going backing to \eqref{w0} and using \eqref{w01}, \eqref{w02}, \eqref{w05}, \eqref{w03} and \eqref{w04}, we arrive at
\begin{align} \label{w06}
\int_{B} -\lambda w + (p-1) u^{p-2} w \,dx+\int_{\R^N \backslash \overline{B}} (-\Delta)^s w \,dx+\omega_{N-1} \psi_{w}(1)=0.
\end{align}
Observe that
$$
\int_{B}-\Delta w+ (-\Delta)^s w-(p-1)u^{p-2}w\,dx = \int_{B}-\lambda w \,dx.
$$
Taking into account \eqref{w06}, we then have that
$$
\int_{B} -\Delta w \,dx + \int_{\R^N}  (-\Delta)^s w \,dx+\omega_{N-1} \psi_{w}(1)=0.
$$
Observe that
$$
\int_{\R^N}  (-\Delta)^s w \,dx=0.
$$
As a consequence of the divergence theorem, we then derive that
$$
\int_{\partial B} \psi_w(1) \,dS+\omega_{N-1} \psi_{w}(1)=2\omega_{N-1} \psi_{w}(1)=0.
$$
We then reach a contradiction from the fact that $\psi_w(1)<0$ by Lemma \ref{hopf}. It in turn indicates that $-\lambda$ is not an eigenvalue and the proof is complete.
\end{proof}

\begin{thm} \label{thm11}
Let $u \in H^1_0(B)$ be a ground state to \eqref{equ}. Then it is non-degenerate in $H^1_{0,r}(B)$.
\end{thm}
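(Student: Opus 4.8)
The plan is to recognize that non-degeneracy of $u$ in $H^1_{0,r}(B)$ amounts to saying that $-\lambda$ is not an eigenvalue of \eqref{equ1} in $H^1_{0,r}(B)$: with $V=-(p-1)u^{p-2}$, a function $w\in H^1_{0,r}(B)\setminus\{0\}$ solves $-\Delta w+(-\Delta)^s w-(p-1)u^{p-2}w=0$ precisely when it is an eigenfunction of \eqref{equ1} with eigenvalue $-\lambda$. Since $V\in L^\infty(B)$ and $-\Delta+(-\Delta)^s$ has compact resolvent on $H^1_{0,r}(B)$, that spectrum is a discrete nondecreasing sequence $\sigma_1<\sigma_2\le\sigma_3\le\cdots\to+\infty$, so it suffices to establish $\sigma_1<-\lambda<\sigma_2$; then $-\lambda$ lies in a spectral gap and the claim follows. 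The lower inequality $\sigma_1<-\lambda$ is obtained at once by using $u$ itself (which is radial) as a competitor in \eqref{min1}: testing \eqref{equ} with $u$ gives $\int_B|\nabla u|^2\,dx+\int_{\R^N}|(-\Delta)^{s/2}u|^2\,dx+\lambda\|u\|_2^2=\|u\|_p^p$, whence
\[
\sigma_1\le\frac{\int_B|\nabla u|^2\,dx+\int_{\R^N}|(-\Delta)^{s/2}u|^2\,dx+\int_B Vu^2\,dx}{\|u\|_2^2}=-\lambda-(p-2)\frac{\|u\|_p^p}{\|u\|_2^2}<-\lambda,
\]
since $p>2$ and $u>0$ in $B$.

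The inequality $\sigma_2\ge-\lambda$ is where the ground-state property enters, and I expect it to be the main obstacle, since it fails for a general solution. The idea is the classical Morse-index argument, carried out in the radial class. Because $u$ minimizes $I$ over the Nehari manifold $N$, it also minimizes $I$ over $N\cap H^1_{0,r}(B)$, while $I'(u)=0$. Writing $\Phi(z):=\langle I'(z),z\rangle$, one has $\langle\Phi'(u),h\rangle=(2-p)\int_B u^{p-1}h\,dx$, so $\langle\Phi'(u),u\rangle=(2-p)\|u\|_p^p\neq0$ and $N\cap H^1_{0,r}(B)$ is, near $u$, a $C^1$-manifold of codimension one with tangent space $\{h\in H^1_{0,r}(B):\int_B u^{p-1}h\,dx=0\}$. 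The second-order minimality condition, together with the vanishing of the curvature correction owing to $I'(u)=0$, then yields
\[
\int_B|\nabla w|^2\,dx+\int_{\R^N}|(-\Delta)^{s/2}w|^2\,dx+\int_B Vw^2\,dx+\lambda\|w\|_2^2\ge0
\]
for every $w\in H^1_{0,r}(B)$ with $\int_B u^{p-1}w\,dx=0$. Hence the quadratic form of $-\Delta+(-\Delta)^s+V+\lambda$ is nonnegative on a subspace of codimension one, so this operator has at most one negative eigenvalue in $H^1_{0,r}(B)$; equivalently at most one $\sigma_k$ lies below $-\lambda$, which combined with $\sigma_1<-\lambda$ forces $\sigma_2\ge-\lambda$. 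The delicate points are the manifold structure of the Nehari set near $u$ and the vanishing of the constrained second-variation correction term; an equivalent route, avoiding manifold language, is to assume $\sigma_2<-\lambda$, choose in the two-dimensional span of $w_1$ and the eigenfunction $w_2$ associated with $\sigma_2$ a nonzero $w$ with $\int_B u^{p-1}w\,dx=0$, project $u+tw$ back onto $N$, and contradict $I\ge m$ by computing the second derivative at $t=0$ (for which $\rho'(0)=0$ thanks to the same identity for $\Phi'(u)$).

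Finally, Lemma \ref{sign} gives $\sigma_2\neq-\lambda$, so altogether $\sigma_1<-\lambda<\sigma_2$. By the discreteness of the spectrum of \eqref{equ1} in $H^1_{0,r}(B)$, the number $-\lambda$ is not an eigenvalue there; equivalently the linearized equation $-\Delta w+(-\Delta)^s w-(p-1)u^{p-2}w=0$ admits only the trivial solution in $H^1_{0,r}(B)$, which is precisely the non-degeneracy of $u$ in $H^1_{0,r}(B)$.
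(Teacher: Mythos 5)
Your proposal is correct and follows essentially the same route as the paper: the lower bound $\sigma_1<-\lambda$ by testing with $u$, the upper bound $\sigma_2\ge-\lambda$ from the second-order minimality of $u$ on the Nehari manifold (the paper states this quadratic-form inequality directly without the manifold preliminaries you supply), and the strict inequality $\sigma_2\neq-\lambda$ from Lemma \ref{sign}. Your derivation of the second-variation inequality is a faithful expansion of a step the paper takes for granted.
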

\begin{proof}
Since $u \in H^1_0(B)$ is a solution to \eqref{equ}, then
$$
\int_{B} |\nabla u|^2 \,dx +[w]_s^2  + \lambda \int_{B} u^2 \,dx  - (p-1) \int_{B} u^p \,dx =-(p-2) \int_{B} u^p \,dx<0.
$$
It follows that
\begin{align} \label{n1}
\int_{B} |\nabla u|^2 \,dx + [w]_s^2  - (p-1) \int_{B} u^p \,dx <- \lambda \int_{B} u^2 \,dx.
\end{align}
Let $\sigma_1$ be the first eigenvalue to \eqref{equ1}. Therefore, by \eqref{n1}, we know that $\sigma_1<\lambda$.
Since $u \in H^1_0(B)$ is a ground state to \eqref{equ}, then
$$
\int_{B} |\nabla w|^2 \,dx + [w]_s^2 + \lambda \int_{B} w^2 \,dx  - (p-1) \int_{B} u^{p-2} w^2 \,dx \geq 0, \quad \forall \,\,  w \in H^1_0(B), \int_{B} u^{p-1} w \,dx=0.
$$
It then shows that $\sigma_2\geq -\lambda$. Further, by Lemma \ref{sign}, we have that $\sigma_2> -\lambda$, namely it holds that $\sigma_1<-\lambda<\sigma_2$.
Consequently, we get that $-\lambda$ is not an eigenvalue to \eqref{equ1} and $u$ is non-degenerate in $H^1_{0,r}(B)$. Thus the proof is complete.
\end{proof}

In what follows, we are going to investigate the non-degeneracy of ground states in the space of non-radially symmetric functions $H_{0,nr}^1(B):=H_{0}^1(B) \backslash H_{0,r}^1(B)$.

\subsection{Non-degeneracy of ground states in $H_{0,nr}^1(B)$} 
Let us introduce the following eigenvalue problem in $H^1_{0,nr}(B)$,
\begin{align} \label{eignr}
-\Delta w + (-\Delta)^s w + \lambda w=\Lambda u^{p-1} w \quad \mbox{in} \,\,\, B, \quad w=0 \quad \mbox{in} \,\,\, \R^N \backslash {B}.
\end{align}
Since $u>0$ in $B$ is a ground state to \eqref{equ}, then $\Lambda_1=1$, where $\Lambda_1$ denotes the first eigenvalue to \eqref{eignr}.

\begin{thm} \label{thm12}
Let $u \in H^1_0(B)$ be a ground state to \eqref{equ}. Then it is non-degenerate in $H^1_{0,nr}(B)$.
\end{thm}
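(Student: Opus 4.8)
The plan is to recast non-degeneracy in $H_{0,nr}^1(B)$ as a statement about the weighted eigenvalue problem \eqref{eignr}. Since $u>0$ solves \eqref{equ}, its first eigenvalue is $\Lambda_1=1$, simple with positive eigenfunction $u$, and $\Lambda_1=1<p-1$ because $p>2$. Moreover, as $u$ is a ground state, the quadratic form $w\mapsto\int_B|\nabla w|^2+[w]_s^2+\lambda\int_B w^2-(p-1)\int_B u^{p-2}w^2$ is nonnegative on the Nehari tangent space $\{w\in H^1_0(B):\int_B u^{p-1}w=0\}$, which is precisely the $L^2(u^{p-2})$-orthogonal complement of the first eigenfunction; hence the second eigenvalue satisfies $\Lambda_2\ge p-1$. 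Consequently it suffices to exclude $\Lambda_2=p-1$, equivalently to show that
$$-\Delta w+(-\Delta)^s w+\lambda w=(p-1)u^{p-2}w\quad\text{in }B,\qquad w=0\quad\text{in }\R^N\setminus B,$$
has no nontrivial solution. By Theorem \ref{thm11} such a solution cannot be radial, so we are reduced to ruling out non-radial solutions.

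Suppose $w\ne 0$ solves the displayed equation and is non-radial. The operator $-\Delta+(-\Delta)^s+\lambda-(p-1)u^{p-2}$ commutes with $O(N)$ and preserves each subspace of functions $g(|x|)Y(x/|x|)$ with $Y$ a fixed-degree spherical harmonic, so every spherical-harmonic component of $w$ solves the same equation; since $w$ is non-radial, some component of degree $\ell\ge1$ is nonzero, and, the kernel being $O(N)$-invariant with $O(N)$ acting irreducibly on degree-$\ell$ harmonics, it contains $g(|x|)Y(x/|x|)$ for a fixed radial profile $g\ne0$ and every degree-$\ell$ spherical harmonic $Y$. Choosing $Y$ odd under the reflection $\sigma$ in a hyperplane $H\ni 0$, we may thus assume $w\circ\sigma=-w$ is antisymmetric; by the regularity theory for the mixed operator (cf.\ \cite{SVWZ}), $w\in C^{1,\widetilde{\alpha}}(\overline{B})$, so $w=0$ on $\partial B$.

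The comparison function is $v:=\nabla u\cdot e$, the directional derivative of $u$ in the unit direction $e\perp H$. Differentiating \eqref{equ} in the direction $e$, and using that $(-\Delta)^s(\nabla u\cdot e)=\nabla\big((-\Delta)^s u\big)\cdot e$ in $B$ for the zero-extension of $\nabla u\cdot e$ (justified since $u\in C^{1,\widetilde{\alpha}}(\overline{B})$ is Lipschitz on $\R^N$), one sees that $v$ solves the same equation $-\Delta v+(-\Delta)^s v+\lambda v=(p-1)u^{p-2}v$ in $B$; moreover $v$ is antisymmetric under $\sigma$, and since $u$ is radially strictly decreasing with radial boundary derivative $<0$ on $\partial B$ by Hopf's lemma (\cite[Theorem 2.2]{BM}), $v$ has a strict sign on each of the two open half-balls $\{\pm(x\cdot e)>0\}\cap B$, vanishes on $H$, and has boundary trace a nonzero multiple of $x\cdot e$ on $\partial B$; in particular $v$ does not satisfy the homogeneous Dirichlet condition and its zero-extension jumps across $\partial B$. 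The decisive step is a Picone type identity for antisymmetric functions adapted to $-\Delta+(-\Delta)^s$: one tests the equation for $v$ against $\psi:=w^2/v$ — which lies in $X_0^1(B)$ because $w$ and $v$ are antisymmetric and vanish to the same order along $H$ — and combines the classical local Picone inequality with its fractional antisymmetric analogue, obtained by folding the quadratic form across $H$ so that the effective kernel $|x-y|^{-N-2s}-|x-\sigma y|^{-N-2s}$ is positive on the half-space; this yields
$$\int_B|\nabla w|^2+[w]_s^2\ \ge\ \int_B\big((p-1)u^{p-2}-\lambda\big)w^2,$$
with equality if and only if $w=cv$ for some constant $c$. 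But testing the equation for $w$ against $w$ itself gives $\int_B|\nabla w|^2+[w]_s^2+\lambda\int_B w^2=(p-1)\int_B u^{p-2}w^2$, so the inequality above is in fact an equality; hence $w=cv$ on $B$. Since $w=0$ on $\partial B$ while $v$ does not vanish identically there, $c=0$ and $w\equiv0$, a contradiction. Therefore $\Lambda_2>p-1$, which gives the non-degeneracy of $u$ in $H^1_{0,nr}(B)$; together with Theorem \ref{thm11} this completes the proof.

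The step I expect to be most delicate is establishing and exploiting the Picone type identity for antisymmetric functions for the mixed operator, with the accompanying bookkeeping: justifying the differentiation of \eqref{equ} and the identification of $(-\Delta)^s(\nabla u\cdot e)$ with the zero-extension near $\partial B$ (where the latter is singular like $\mathrm{dist}(\cdot,\partial B)^{-2s}$ because of the jump, and where $u$ enjoys only $C^{1,\widetilde{\alpha}}(\overline{B})$ regularity so that $D^2u$ may be unbounded when $s$ is close to $1$); verifying that $\psi=w^2/v$ is admissible, controlling it near $H$, near the origin (where $v$ vanishes to first order) and near $\partial B$; and ensuring that the symmetrization producing the fractional bilinear form, carried out via the weak formulation and, if needed, a cutoff near $\partial B$, does not generate an uncontrolled boundary contribution when $s\ge1/2$. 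The rigidity (equality case) of the fractional antisymmetric Picone inequality — forcing $w=cv$ — is the ingredient that converts the boundary mismatch between the genuine Dirichlet eigenfunction $w$ and the "overdetermined" solution $v=\nabla u\cdot e$ into the contradiction.
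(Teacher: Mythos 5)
Your proposal follows essentially the same route as the paper: reduce to showing $\Lambda_2>p-1$, produce an antisymmetric eigenfunction, compare it against $v=\pm\partial_e u$ (which solves the same linearized equation and has a strict sign on each half-ball with $\partial_e v>0$ on the symmetry hyperplane), and close via a Picone type inequality whose folded kernel $|x-y|^{-N-2s}-|x-\sigma y|^{-N-2s}$ is nonnegative on the half-space, concluding $\widetilde w=cv$ and then $c=0$ from the boundary mismatch.

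The one genuinely different step is how you obtain the antisymmetric eigenfunction: you pass through the spherical harmonic decomposition and the $O(N)$-irreducibility of $\mathcal H_\ell$ ($\ell\ge1$), whereas the paper simply takes $\widetilde w=\tfrac12(w-w\circ\sigma_\nu)$ for any hyperplane reflection $\sigma_\nu$ with $w\ne w\circ\sigma_\nu$ and observes that this antisymmetrization again solves \eqref{eignr1} because $u$ is radial. The paper's symmetrization is more elementary and avoids invoking representation theory, though both produce the required object. A second, technical, difference: you test with $\psi=w^2/v$ directly and then argue that $\psi\in X_0^1(B)$; the paper instead tests with $\widetilde w_k^2/v$ where $\widetilde w_k=\widetilde w\,\zeta_k$ is a boundary cutoff, and then sends $k\to\infty$ via dominated convergence, Fatou and the $H^s$-convergence lemma of \cite{DFW}. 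Since near $\partial B$ both $\widetilde w$ and $v$ are comparable to $\mathrm{dist}(\cdot,\partial B)$ (the former by the boundary regularity of \cite{SVWZ}, the latter by Hopf), the admissibility of $w^2/v$ near $\partial B$ is indeed the delicate point you flag, and the paper's cutoff sidesteps it cleanly; you would still need the interior positivity $\partial_e v>0$ on $H\cap B$ (established in the paper by a barrier comparison based on \cite[Lemma 3.3]{BJ} and \cite[Lemma 3.1]{BMS}) to control $\psi$ near the hyperplane, which you implicitly assume. With these technical points filled in, your argument is a correct proof along the paper's lines.
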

\begin{proof}
To prove the desired conclusion, we only need to assert that $\Lambda_2>p-1$, where $\Lambda_2$ denotes the seond eigenvalue to \eqref{eignr}. Assume by contradiction that $\Lambda_2 \leq p-1$. Let $w \in H^1_{0, nr}(B)$ be an eigenfunction to \eqref{eignr} corresponding to $\Lambda_2$. Let $\nu \in \mathbb{S}^{N-1}$ and $\sigma_{\nu}$ be the reflection with respect to $T_{\nu}$, where
$$
T_{\nu}:=\left\{ x \in \R^N : x \cdot \nu=0 \right\}.
$$
Since $w$ is non-radially symmetric, then there exists $\nu \in \mathbb{S}^{N-1}$ such that $w \neq w \circ \sigma_{\nu}$. For simplicity, we shall assume that $\nu=e_1$ and $T_{e_1}=\left\{ x \in \R^N : x_1=0 \right\}$. Let
$$
\Sigma^+:=\left\{x \in \R^N : x_1 > 0\right\}, \quad \Sigma^-:=\left\{x \in \R^N : x_1 < 0\right\}.
$$
Define
$$
\widetilde{w}:= \frac{w-w \circ \sigma_{e_1}}{2} \in H^1_0(B) \backslash \{0\}.
$$
Since $u$ is radially symmetric, then $\widetilde{w}$ solves the equation
\begin{align} \label{eignr1}
-\Delta \widetilde{w} + (-\Delta)^s \widetilde{w} + \lambda \widetilde{w}=\Lambda_2 u^{p-2} \widetilde{w} \quad \mbox{in} \,\,\, B, \quad \widetilde{w}=0 \quad \mbox{in} \,\,\, \R^N \backslash {B}.
\end{align}
In addition, we know that $\widetilde{w}$ is antisymmetric with respect to the hyperplane $T_{e_1}$. 
Define $v=-\partial_{x_1} u$. We conclude hat $v$ satisfies the equation
\begin{align} \label{equv}
-\Delta v + (-\Delta)^s v + \lambda v=(p-1) u^{p-2} v \quad \mbox{in} \,\,\, B, \quad v=0 \quad \mbox{in} \,\,\,\R^N \backslash {B}.
\end{align}
Since $u$ is radially symmetric and strictly decreasing in the radial direction, then $v$ is also antisymmetric with respect to the hyperplane $T_{e_1}$ and $v>0$ in $\{x \in B: x_1>0\}$. 

We claim that $\partial_{x_1} v>0$ on $\{x \in B: x_1=0\}$. Let $x_0 \in \{x \in B: x_1=0\}$ and $\rho>0$ be small enough such that $B_{\rho}(x_0) \Subset B$. Define $B_*:=B_{\rho}(4\rho, x_0')$ and $B_*':=T_{e_1}(B)$, where $x_0=(0, x_0')$. Define $\theta:=\inf_{x \in B_*} v(x)>0$. Let $g \in H^1_0(B)$ be the positive radially symmetric solution to the equation
$$
-\Delta g + (-\Delta)^s g =1 \quad \mbox{in} \,\,\, B, \quad g=0 \quad \mbox{in} \,\,\, \R^N \backslash {B}.
$$
Let $\phi \in C^{\infty}_0(B_*)$, $0 \leq \phi \leq 1$ and $\phi=1$ in $\widetilde{B} \Subset B_*$ with $|\widetilde{B}|>0$. Define
$$
h(x):=\kappa x_1 g(x) +\theta \phi(x) -\theta \phi (\sigma_{e_1}(x)).
$$
Taking $\kappa>0$ be small enough and using the spirit of the proof of \cite[Lemma 3.3]{BJ}, we then have that
$$
-\Delta h + (-\Delta)^s h+(\lambda -(p-1)u^{p-2})h \leq 0 \quad \mbox{in} \,\,\, B_{\rho}^+(x_0),
$$
where $B_{\rho}^+(x_0):= B_{\rho}(x_0)\cap \Sigma^+$. It is simple to check that $v-h$ is antisymmetric with respect to the hyperplane $T_{e_1}$ and $v-\sigma h \geq 0$ in $ \Sigma^+ \backslash B_{\rho}^+(x_0)$ for some small constant $\sigma>0$. Furthermore, it holds that
$$
-\Delta (v-\sigma h)+ (-\Delta)^s (v-\sigma h)+(\lambda -(p-1)u^{p-2})(v-\sigma h) \geq 0 \quad \mbox{in} \,\,\, B_{\rho}^+(x_0).
$$
Taking into account \cite[Lemma 3.1]{BMS}, we then get that $v \geq h=\sigma \kappa x_1 g$ in $B_{\rho}^+(x_0)$. It then indicates that $\partial_{x_1} v>0$ on $\{x \in B: x_1=0\}$. As a consequence, the function $x \mapsto \frac{v(x)}{x_1}$ can be extended to a positive smooth function in $B$. 

Define $\widetilde{w}_k:=\widetilde{w} \zeta_k \in C_0(B)$, where $\chi \in C^{\infty}_0(\R)$ be such that $0 \leq \chi \leq 1$ in $\R$, $\chi=1$ in $[-1, 1]$, $\chi=0$ in $ \R \backslash (-2, 2)$ and
$$
\zeta_k(x):=1-\chi(k(1-|x|)), \quad x \in \R^N.
$$
Testing \eqref{equv} by $\frac{\widetilde{w}_k^2}{v}$ and using the divergence theorem, we then obtain that
\begin{align} \label{v12}
\begin{split}
C(N, s) \int_{\R^N} \int_{\R^N} \frac{\widetilde{w}_k^2(x)}{v(x)} \frac{v(x)-v(y)}{|x-y|^{N+2s}} \,dxdy&=\int_{B}\frac{\widetilde{w}_k^2}{v} (-\Delta)^s v \,dx \\
&=\int_{B} \frac{\widetilde{w}_k^2}{v} \Delta v \,dx + \int_{B} \left((p-1)u^{p-1} -\lambda\right) \widetilde{w}_k^2 \,dx \\
&=-\int_{B} \nabla \left(\frac{\widetilde{w}_k^2}{v}\right) \cdot \nabla v \,dx + \int_{B} \left((p-1)u^{p-1} -\lambda\right) \widetilde{w}_k^2 \,dx.
\end{split}
\end{align}
Observe that
$$
\nabla \left(\frac{\widetilde{w}_k^2}{v}\right) \cdot \nabla v=2\frac{\widetilde{w}_k}{v} \nabla \widetilde{w}_k \cdot \nabla v-\frac{\widetilde{w}_k^2}{v^2} |\nabla v|^2 \leq |\nabla \widetilde{w}_k|^2,
$$
\begin{align*}
&\int_{\R^N} \int_{\R^N} \frac{\widetilde{w}_k^2(x)}{v(x)} \frac{v(x)-v(y)}{|x-y|^{N+2s}} \,dxdy
=\frac 12 \int_{\R^N} \int_{\R^N} \left(\frac{\widetilde{w}_k^2(x)}{v(x)}-\frac{\widetilde{w}_k^2(y)}{v(y)}\right) \frac{v(x)-v(y)}{|x-y|^{N+2s}} \,dxdy \\
& = \frac 12 \int_{\R^N} \int_{\R^N}  \frac{\left(\widetilde{w}_k(x)-\widetilde{w}_k(y)\right)^2}{|x-y|^{N+2s}} \,dxdy -\frac 12 \int_{\R^N} \int_{\R^N}  \frac{v(x)v(y)}{|x-y|^{N+2s}}\left(\frac{\widetilde{w}_k(x)}{v(x)}-\frac{\widetilde{w}_k(y)}{v(y)}\right)^2 \,dxdy,
\end{align*}
where we used the fact that
$$
\left(\frac{\widetilde{w}_k^2(x)}{v(x)}-\frac{\widetilde{w}_k^2(y)}{v(y)}\right) \left(v(x)-v(y)\right) = \left(\widetilde{w}_k(x)-\widetilde{w}_k(y)\right)^2-v(x)v(y) \left(\frac{\widetilde{w}_k(x)}{v(x)}-\frac{\widetilde{w}_k(y)}{v(y)}\right)^2, \quad \forall \,\, x, y \in \R^N.
$$
Going back to \eqref{v12}, we then have that
\begin{align} \label{v22} 
\begin{split}
\int_{B} \left((p-1)u^{p-1} -\lambda\right) \widetilde{w}_k^2 \,dx 
&\leq  \int_{B} |\nabla \widetilde{w}_k|^2\,dx + \frac {C(N, s)}{2} \int_{\R^N} \int_{\R^N}  \frac{\left(\widetilde{w}_k(x)-\widetilde{w}_k(y)\right)^2}{|x-y|^{N+2s}} \,dxdy\\ 
& \quad -\frac {C(N,s)}{2} \int_{\R^N} \int_{\R^N}  \frac{v(x)v(y)}{|x-y|^{N+2s}}\left(\frac{\widetilde{w}_k(x)}{v(x)}-\frac{\widetilde{w}_k(y)}{v(y)}\right)^2 \,dxdy \\ 
&= [\widetilde{w}_k]_s^2 -\frac {C(N,s)}{2} \int_{\R^N} \int_{\R^N}  \frac{v(x)v(y)}{|x-y|^{N+2s}}\left(\frac{\widetilde{w}_k(x)}{v(x)}-\frac{\widetilde{w}_k(y)}{v(y)}\right)^2 \,dxdy \\
&  \quad +\int_{B} |\nabla \widetilde{w}_k|^2\,dx.
\end{split}
\end{align}
It is simple to compute that
\begin{align} \label{v23}
\int_{B} |\nabla \widetilde{w}_k|^2 \,dx= \int_{B} |\nabla \widetilde{w}|^2 \zeta_k^2 \,dx+2\int_{B} \widetilde{w} \zeta_k \nabla \widetilde{w} \cdot \nabla \zeta_k \,dx + \int_{B} |\nabla \zeta_k|^2 \widetilde{w}^2 \,dx.
\end{align}
Since $\lim_{k \to \infty} \zeta_k(x)=1$ for $x \in B$, by the dominated convergence theorem, then
$$
\lim_{k \to \infty}\int_{B} |\nabla \widetilde{w}|^2 \zeta_k^2 \,dx=\lim_{k \to \infty}\int_{B} |\nabla \widetilde{w}|^2\,dx.
$$
Reasoning as the proofs of \eqref{w31} and \eqref{intw} and applying \eqref{deffd}, \cite[Lemma 2.1]{BM} and \cite[Theorem 1.3]{SVWZ} along with the dominated convergence theorem, we are able to conclude that
$$
\lim_{k \to \infty} \int_{B} \widetilde{w} \zeta_k \nabla \widetilde{w} \cdot \nabla \zeta_k \,dx=0, \quad \lim_{k \to \infty} \int_{B} |\nabla \zeta_k|^2 \widetilde{w}^2 \,dx=0.
$$
It then follows from \eqref{v23} that
$$
\lim_{k \to \infty}\int_{B} |\nabla \widetilde{w}_k|^2 \,dx=\int_{B} |\nabla \widetilde{w}|^2 \,dx.
$$
Moreover, invoking the dominated convergence theorem, we know that
$$
\lim_{k \to \infty}\int_{B} \left((p-1)u^{p-1} -\lambda\right) \widetilde{w}_k^2 \,dx=\int_{B} \left((p-1)u^{p-1} -\lambda\right) \widetilde{w}^2 \,dx.
$$
Taking advantage of \cite[Lemma 2.2]{DFW}, we have that $\lim_{k \to \infty} [\widetilde{w}_k]_s^2=[\widetilde{w}]_s^2$. 
Observe that
\begin{align*}
\int_{\R^N} \int_{\R^N} \frac{v(x)v(y)}{|x-y|^{N+2s}}\left(\frac{\widetilde{w}_k(x)}{v(x)}-\frac{\widetilde{w}_k(y)}{v(y)}\right)^2 \,dxdy
&=2\int_{\Sigma_{x_1}^+} \int_{\Sigma_{y_1}^+} H_v^{e_1}(x,y )\left(\frac{\widetilde{w}_k(x)}{v(x)}-\frac{\widetilde{w}_k(y)}{v(y)}\right)^2 \,dxdy,
\end{align*}
where 
$$
\Sigma_{x_1}^+:=\left\{x \in \R^N : x_1>0\right\}, \quad \Sigma_{y_1}^+:=\left\{x \in \R^N : y_1>0\right\},
$$
$$
H_v^{e_1}(x, y):= v(x)v(y) \left(\frac{1}{|x-y|^{N+2s}}- \frac{1}{|x-\sigma_{e_1}(y)|^{N+2s}}\right), \quad x \in \Sigma_{x_1}^+, y \in \Sigma_{y_1}^+.
$$
If $x \in \Sigma_{x_1}^+$ and $y \in \Sigma_{y_1}^+$, then $|x-y| < |x-\sigma_{e_1}(y)|$. Therefore, we get that $H_v^{e_1} \geq 0$ in $\Sigma_{x_1}^+ \times \Sigma_{y_1}^+$.
At this point, make using of \eqref{v22} and Fatou's lemma, we then arrive at
$$
C(N,s)\int_{\Sigma_{x_1}^+} \int_{\Sigma_{y_1}^+} H_v^{e_1}(x,y )\left(\frac{\widetilde{w}(x)}{v(x)}-\frac{\widetilde{w}(y)}{v(y)}\right)^2 \,dxdy \leq \int_{B} |\nabla \widetilde{w}|^2\,dx + [\widetilde{w}]_s^2 - \int_{B} \left((p-1)u^{p-1} -\lambda\right) \widetilde{w}^2 \,dx.
$$
Since $\widetilde{w} \in H^1_0(B)$ solves \eqref{eignr1} and $\Lambda \leq p-1$ by the assumption, then
\begin{align*}
\int_{B} |\nabla \widetilde{w}|^2\,dx + [\widetilde{w}]_s^2 &=\int_{B} \left(\Lambda_2 u^{p-1} -\lambda\right) \widetilde{w}^2 \,dx \leq \int_{B} \left((p-1)u^{p-1} -\lambda\right) \widetilde{w}^2 \,dx \\
&\leq \int_{B} |\nabla \widetilde{w}|^2\,dx + [\widetilde{w}]_s^2 -C(N,s)\int_{\Sigma_{x_1}^+} \int_{\Sigma_{y_1}^+} H_v^{e_1}(x,y )\left(\frac{\widetilde{w}(x)}{v(x)}-\frac{\widetilde{w}(y)}{v(y)}\right)^2 \,dxdy.\\
\end{align*}
It then leads to
$$
\int_{\Sigma_{x_1}^+} \int_{\Sigma_{y_1}^+} H_v^{e_1}(x, y)\left(\frac{\widetilde{w}(x)}{v(x)}-\frac{\widetilde{w}(y)}{v(y)}\right)^2 \,dxdy \leq 0.
$$
At this point, we have that
$$
\frac{\widetilde{w}(x)}{v(x)}=\frac{\widetilde{w}(y)}{v(y)}, \quad \forall \,\, x \in \Sigma_{x_1}^+ \cap B, y \in \Sigma_{y_1}^+ \cap B.
$$
This shows that there exists $k \in \R$ such that $\widetilde{w}=kv$ in $\Sigma_{x_1}^+ \cap B$. Since $\widetilde{w}(x)=0$
and $v(x)>0$ as $|x| \to 1$ in $x \in \Sigma_{x_1}^+$, then $k=0$. This shows that $\widetilde{w}=0$. We then reach a contradiction. Therefore, we conclude that $\Lambda_2>p-1$ and $u$ is non-degenerate in $H^1_{nr}(B)$. Thus the proof is complete.
\end{proof}

\begin{proof}[Proof of Theorem \ref{thm2}]
Combining Theorems \ref{thm11} and \ref{thm12}, we then have the desired conclusion. This completes the proof.
\end{proof}

\section{Uniqueness of ground states} \label{uniqueness}

Relying on the non-degeneracy of ground states obtained in the previous section, we are now in a position to establish the uniqueness of ground states. For this aim, we first prove the uniqueness of the solutions for $p$ close to $2$, whose proof is inspired by the one of \cite[Theorem 1.6]{DIS}.


\begin{proof} [Proof of Theorem \ref{thm3}]
Let $\{p_n\} \subset (2, 2^*)$ be such that $p_n \to 2$ as $n \to \infty$. Let $u_n \in H^1_0(B)$ be a ground state to \eqref{equ} with $p=p_n$. Define $M_n:=\|u_n\|_{\infty}$. First we shall prove that $\{M_n^{p_n-2}\}$ is bounded in $\R$ by the blow-up argument introduced in \cite{GS}. Let us argue by contradiction. Therefore, we may assume that $M_n^{p_n-2} \to +\infty$ as $n \to \infty$. Let $x_n \in B$ be such that $M_n=u_n(x_n)$. Define
$$
v_n:=\frac{1}{M_n}u_n(\mu_n \cdot +x_n), \quad \mu_n:=M_n^{-\frac{p_n-2}{2}}.
$$
It is obvious that $0<v_n \leq 1$ and $v_n(0)=1$. Moreover, we know that
\begin{align} \label{equ311}
-\Delta v_n + \mu_n^{2-2s}(-\Delta)^s v_n =f_n,  \quad \mbox{in} \,\,\, B_n, \quad v_n=0 \quad \mbox{in} \,\,\,\R^N \backslash B_n,
\end{align}
where
$$
f_n:=v_n^{p_n-1}-\frac{\lambda}{M_n^{p_n-2}}  v_n,
$$
$$
B_n:=\left\{ y \in \R^N : \mu_n y + x_n \in B\right\}.
$$

If $\mbox{dist}(x_n, \partial B) \mu_n^{-1} \to + \infty$ as $n \to \infty$, then $B_n \to \R^N$ as $n \to \infty$. Let $R>0$. It then holds that $\overline{B_{4R}} \subset B_n$ for $n \in \N^+$ large enough. Using the arguments as the proof of \cite[Theorem 1.3]{SVWZ}, we then get that, for any $0<\alpha< \min\{1, 2(1-s)\}$,
$$
\|v_n\|_{C^{1, \alpha}(\overline{B_{2R}})} \leq C_1,
$$
where $C_1=C_1(s, \alpha, N, R)>0$. 
Further, applying the arguments as the proof of \cite[Theorem 1.5]{SVWZ}, we then obtain that 
$$
\|v_n\|_{C^{2, \alpha}(\overline{B_{R}})} \leq C_2,
$$
where $C_2=C_2(s, \alpha, N, R)>0$. Taking into account Arzel\`a-Ascoli's theorem, 
we then know that there exists $v \in C^2(\R^N)$ solves the equation
$$
-\Delta v=v \quad \mbox{in}\,\, \R^N.
$$
Since $v_n>0$, then $v \geq 0$. By the strong maximum principle, we have that $v>0$ in $\R^N$. However, by Liouville's theorem, we get that $v=0$ in $\R^N$. This is impossible.

If $\mbox{dist}(x_n, \partial B) \mu_n^{-1} \to d \geq 0$ as $n \to \infty$ for some $d < +\infty$, then $x_n \to x_0 \in \partial B$ as $n \to \infty$. Without restriction, we may assume that 
$x_0=(0, 0, \cdots, -1)$. Define
$$
w_n:=\frac{1}{M_n} u_n(\mu_n \cdot + \xi_n), \quad \mu_n:=M_n^{-\frac{p_n-2}{2}}.
$$
where $\xi_n \in \partial B$ is the projection of $x_n$ on $\partial B$. Define
$$
D_n:=\left\{y \in \R^N : \mu_n y+ \xi_n \in B \right\}.
$$
It is straightforward to see that $0 \in \partial D_n$ and $D_n \to \R^N_+$ as $n \to \infty$. In addition, we find that
$$
-\Delta w_n + \mu_n^{2-2s}(-\Delta)^s w_n =g_n,  \quad\mbox{in} \,\,\, D_n, \quad w_n=0 \quad \mbox{in} \,\,\, \R^N \backslash D_n,
$$
where
$$
g_n:=w_n^{p_n-1}-\frac{\lambda}{M_n^{p_n-2}}  w_n.
$$
Define
$$
y_n:=\frac{x_n-\xi_n}{\mu_n} \in D_n
$$
be such that $w_n(y_n)=1$. Since $\xi_n \in \partial B$ is the projection of $x_n$ on $\partial B$, then $|y_n|=\mbox{dist}(x_n, \partial B) \mu_n^{-1}$. Following closely the proof of \cite[Lemma 2.1]{BMS}, we can conclude that 
$$
w_n(y) \leq C \mbox{dist}(y, \R^N \backslash D_n), \quad \forall \,\, y \in D_n,
$$ 
where $C=C(N, s)>0$. It then gives that there exists $\delta_0>0$ such that $w_n(y) <\frac 12$ for any $y \in D_n$ and $\mbox{dist}(y, \partial D_n)<\delta_0$.
Let us now prove that $d>0$. If not, then $d=0$, namely $|y_n|=\mbox{dist}(x_n, \partial B) \mu_n^{-1} \to 0$ as $n \to \infty$. Since $\mbox{dist}(y_n, \partial D_n) \leq |y_n|$ by the fact that $0 \in \partial D$, then $\mbox{dist}(y_n, \partial D_n)<\delta_0$ for $n \in \N^+$ large enough. Therefore, we obtain that $w_n(y_n) < \frac 12$. This is impossible. It then shows that $d>0$. By a similar way as previously, we then know that there exists $w \in C^2(\R^+_N)$ solving the equation
$$
-\Delta v=v \quad \mbox{in}\,\,\, \R^N_+.
$$
In addition, since $d>0$, then there exists $y_0 \in \R^+_N$ such that $y_n \to y_0$ as $n \to \infty$ and $w(y_0)=1$, namely $w \neq 0$ in $\R^+_N$. However, by Liouville's theorem, it holds that $w=0$ in $\R^+_N$. We then get a contradiction. It then shows that $\{M_n^{p_n-2}\}$ is bounded in $\R$.

Without restriction, we may assume that $M_n^{p_n-2} \to \mu$ as $n \to \infty$ for some $\mu>0$. Define $z_n:=\frac{u_n}{M_n}$. Therefore, we find that $0<z_n \leq 1$ and $z_n$ satisfies the equation
$$
-\Delta z_n + (-\Delta)^s z_n =h_n  \quad\mbox{in} \,\,\, B, \quad z_n=0 \quad \mbox{in} \,\,\, \R^N \backslash B,
$$
where
$$
h_n:=-\lambda z_n +M_n^{p_n-2}z_n^{p_n-1}.
$$
In view of \cite[Theorems 1.4 $\&$ 1.5]{SVWZ} and Arzel\`a-Ascoli's theorem, we similarly have that
there exists $z \in C^{2}(B)$ and $z \geq 0$ satisfying the equation
$$
-\Delta z + (-\Delta)^s z =\left(-\lambda + \mu \right) z \quad\mbox{in} \,\,\, B, \quad z=0 \quad \mbox{in} \,\,\, \R^N \backslash B.
$$
Furthermore, by the maximum principle, we derive that $z>0$ in $B$. This indicates that $\mu=\lambda_{1,s} + \lambda$ and $z=\varphi_{1,s}$ where $\varphi_{1,s}$ is the eigenfunction to the following eigenvalue problem corresponding to the first eigenvalue $\lambda_{1,s}$ in $H^1_0(B)$,
\begin{align} \label{equ30}
-\Delta \varphi+ (-\Delta)^s \varphi =\lambda \varphi \quad \mbox{in} \,\,\, B, \quad \varphi=0 \quad \mbox{in} \,\,\, \R^N \backslash {B}.
\end{align}

We now suppose by contradiction that \eqref{equ} admits two ground states to $u_n, v_n \in H^1_0(B)$ with $p=p_n$ and $p_n \to 2$ as $n \to \infty$. Define
$$
w_n:=\frac{u_n-v_n}{\|u_n-v_n\|_{2}}.
$$
Clearly, we see that $w_n \in H^1_0(B)$ satisfies the equation
\begin{align} \label{equ31}
-\Delta w_n + (-\Delta)^s w_n =-\lambda w_n + \alpha_n w_n  \quad\mbox{in} \,\,\,  B, \quad w_n=0 \quad \mbox{in} \,\,\, \R^N \backslash B,
\end{align}
where
$$
\alpha_n(x):=(p_n-1) \int_0^1 \left(tu_n(x) +(1-t)v_n(x)\right)^{p_n-2} \,dt, \quad x \in B.
$$
Note that
$$
\lim_{n \to \infty} (p_n-1)u_n^{p_n-2} = \lim_{n \to \infty} (p_n-1)M_n^{p_n-2} \left(\frac{u_n}{M_n}\right)^{p_n-2}=\lambda_{1,s} + \lambda, \quad \forall \,\, x \in B.
$$
Similarly, we know that $(p_n-1)v_n^{p_n-2} \to \lambda_{1,s} + \lambda$ for any $x \in B$ as $n \to \infty$. Since $\|u_n\|_{\infty} \leq C$ and $\|v_n\|_{\infty} \leq C$ for some $C>0$, by the dominated convergence theorem, then $\alpha_n(x) \to \lambda_{1,s} + \lambda$ for any $x \in B$ as $n \to \infty$. Noting that $\|\alpha_n\|_{\infty} \leq C$ 
and testing \eqref{equ31} by $w_n$, we then get that $\{w_n\}$ is bounded in $H^1_0(B)$. As a consequence, there exists $w \in H^1_0(B) \backslash \{0\}$ such that $w_n \wto w$ in $H_0^1(B)$ as $n \to \infty$ satisfying the equation
$$
-\Delta w + (-\Delta)^s w =\lambda_{s,1} w  \quad\mbox{in} \,\,\, B, \quad w=0 \quad \mbox{in} \,\,\, \R^N \backslash B.
$$
It then follows that $w=\varphi_{1,s}$. Invoking \cite[Theorem 1.1]{SVWZ}, we find that $\|w_n\|_{\infty} \leq C$. Using \cite[Theorem 1.3]{SVWZ} and Arzel\`a-Ascoli’s theorem, we then derive that $w_n  \to \varphi_{1,s}$ in $C(\overline{B})$ as $n \to \infty$. Therefore, adapting  \cite[Theorem 1.2]{BMS} and Arzel\`a-Ascoli’s theorem, 
we obtain that
\begin{align} \label{hopfwn}
\frac{w_n}{1-|x|} \to \frac{\varphi_{1,s}}{1-|x|} \quad \mbox{in} \,\, C(\overline{B}) \,\, \mbox{as} \,\, n \to \infty.
\end{align}
If $w_n \geq 0$ in $B$, then $u_n \geq v_n$ and $u_n^{p_n-2} \geq v_n^{p_n-2}$ in $B$. Observe that
$$
0=\int_B \left(-\Delta u_n + (-\Delta)^s u_n\right) v_n \,dx -\int_B \left(-\Delta v_n + (-\Delta)^s v_n\right) u_n \,dx=\int_{B}u_n(u_n^{p_n-2} -v_n^{p_n-2}) v_n \,dx.
$$
It then gives that $u_n=v_n$ in $B$. This is impossible by the assumption. Accordingly, we have that $w_n$ has to change sign in $B$. Define
$$
w_n(x_n):=\min_{x \in B} w_n(x).
$$
Therefore, we know that $w_n(x_n)<0$ and $x_n \to x_0 \in \partial B$ as $n \to \infty$, because $w_n  \to \varphi_{1,s}$ in $C(\overline{B})$ as $n \to \infty$. As a result, applying \eqref{hopfwn}, we know that
$$
\lim_{n \to \infty} \frac{\varphi_{1,s}(x_n)}{1-|x_n|}=\lim_{n \to \infty} \frac{w_n(x_n)}{1-|x_n|} \leq 0.  
$$
However, using Hopf's lemma (see \cite[Theorem 2.2]{BM}), we have that
$$
\lim_{n \to \infty} \frac{\varphi_{1,s}(x_n)}{1-|x_n|}>0.
$$
We then reach a contradiction. This completes the proof.
\end{proof}

\begin{proof}[Proof of Theorem \ref{thm4}]
We shall follow the proof of \cite[Theorem 1.5]{FW1} to demonstrate the desired conclusion. Let $2<p_* \leq 2^*$ be the large number such that \eqref{equ} has a unique ground state $u_p \in H^1_0(B)$ for any $2<p<p_*$. Here the existence of $p_*$ is guaranteed by Theorem \ref{thm2}. If $p_*=2^*$, then the proof is completed. We now assume by contradiction that $2<p_*<2^*$. Define
$$ 
\mathcal{C}_0^{\beta}:=\left\{ u \in C^{0,\beta}(\R^N) : -\Delta u+ (-\Delta)^s u \in C^{0,\beta}(\overline{B}), \,\,\,  u=0 \,\,\mbox{in}\,\, \R^N \backslash B\right\}
$$
equipped with the norm
$$
\|u\|_{\mathcal{C}^{\beta}_0}:=\|u\|_{C^{0,\beta}(\R^N)} + \|\Delta u\|_{C^{0,\beta}(\overline{B})} + \|(-\Delta)^s u\|_{C^{0,\beta}(\overline{B})},
$$
where $0<\beta< \min \left\{(\widetilde{p}-2)\alpha, \alpha \right\}$, $2<\widetilde{p}<p_*$ is constant and $0<\alpha<1$ is a constant corresponding to H\"older's regularity exponential of solutions to the equation
$$
-\Delta u+ (-\Delta)^s u =f \quad \mbox{in} \,\,\, B, \quad  u=0 \quad \mbox{in} \,\,\, \R^N \backslash B.
$$
Regarding the regularity of solutions, we refer the readers to \cite[Theorem 2.7]{BDVV} or \cite[Theorem 1.3]{SVWZ}. 
Define $F: (2, +\infty) \times \mathcal{C}_0^{\beta} \to C^{0,\beta}(\overline{B})$ by
$$
F(p, u):=-\Delta u + (-\Delta)^s u+ \lambda u -|u|^{p-1}.
$$
Observe that $F(p_*, u_{p_*})=0$ and
$$
\partial_u F(p_*, u_{p_*})=-\Delta + (-\Delta)^s + \lambda -(p_*-1)u_{p_*}^{p_*-2}.
$$
In addition, we know that $Ker[\partial_u F(p_*, u_{p_*})]=0$ by Theorem \ref{thm2}. It is not hard to check that $-\Delta + (-\Delta)^s + \lambda : \mathcal{C}_0^{\beta} \to C^{0, \beta}(\overline{B})$ is a Fredholm map of index zero. Observe that $u_{p_*} \in C^{1, \alpha}(\overline{B})$ for some $0<\alpha<1$ by \cite[Theorem 2.7]{BDVV} or \cite[Theorem 1.3]{SVWZ}. In particular, it holds that $u_{p_*} \in C^{0, \alpha}(\overline{B})$. Therefore, we get that $u_{p_*}^{p_*-2} \in C^{0, \beta}(\overline{B})$, because of $0<\beta< \min \left\{(\widetilde{p}-2)\alpha, \alpha \right\}$. Invoking Arzel\`a-Ascoli's theorem, we then know that
the map $v \mapsto u_{p_*}^{p_*-2}v : \mathcal{C}_0^{\beta} \to C^{0, \beta}(\overline{B})$ is compact. Therefore, we can conclude that $\partial_u F(p_*, u_{p_*}) : \mathcal{C}_0^{\beta} \to C^{0, \beta}(\overline{B})$ is an isomorphism. Using the implicit function theorem, we then know that there exists $\delta>0$ such that, for any $p \in (p_*-\delta, p_*+\delta)$, there exists a unique $u_p \in B(u_{p_*}, \delta)$ such that $F(p, u_p)=0$, where
$$
 B(u, \delta):=\left\{v \in \mathcal{C}^{\beta}_0 : \|v-u\|_{\mathcal{C}^{\beta}_0} < \delta \right\}, \quad u \in \mathcal{C}^{\beta}_0.
$$
Let us suppose that there exists $\widetilde{u}_{p_*} \in \mathcal{C}_0^{\beta} \backslash \{0\}$ such that $F(p_*, \widetilde{u}_{p_*})=0$. Similarly, we have that there exists $\delta>0$ by taking smaller if necessary such that, for any $p \in (p_*-\delta, p_*+\delta)$, there exists a unique $\widetilde{u}_p \in B(\widetilde{u}_{p_*}, \delta)$ such that $F(p, \widetilde{u}_p)=0$. By the continuity, taking $\delta>0$ much smaller if necessary, we get that $u_p(0)>0$ and $\widetilde{u}_p(0)>0$. Therefore, by the strong maximum principle, we obtain that $u_p>0$ and $\widetilde{u}_p>0$ in $B$. In view of the definition of $p_*$, then $u_p=\widetilde{u}_p$ for any $p_*-\delta<p<p_*$. Taking $\{p_n\} \subset (p_*-\delta, p_*)$ be such that $p_n \to p_*$ as $n \to \infty$, we then derive that $u_{p_*}=\widetilde{u}_{p_*}$. It then shows that $u_{p_*}$ is the unique solution of $F(p_*, u)=0$.

Next, we shall prove that there exists $0<\eps_*<\delta$ such that, for any $p_*<p<p_*+\eps_*$, there exists a unique solution to $F(p, u)=0$. If not, then there exists a sequence $\{p_n\} \subset (p_*, p_*+\delta)$ with $p_n \to p_*$ as $n \to \infty$ such that there exist $u_{p_n}, \widetilde{u}_{p_n} \in \mathcal{C}_0^{\beta} \backslash \{0\}$ satisfying $u_{p_n} \neq \widetilde{u}_{p_n}$ and $F(p_n, u_{p_n})=F(p_n, \widetilde{u}_{p_n})=0$.
Define
$$
z_n:=\frac{u_{p_n}-\widetilde{u}_{p_n}}{\|u_{p_n}-\widetilde{u}_{p_n}\|_{2}}.
$$
It is simple to see that $z_n$ solves the equation
\begin{align} \label{equz}
-\Delta z_n + (-\Delta)^s z_n =-\lambda z_n + \theta_n z_n  \quad\mbox{in} \,\,\, B, \quad z_n=0 \quad \mbox{in} \,\,\, \R^N \backslash B,
\end{align}
where
$$
\theta_n(x):=(p_n-1) \int_0^1 \left(t u_{p_n}(x) +(1-t)\widetilde{u}_{p_n}(x)\right)^{p_n-2} \,dt, \quad x \in B.
$$
Proceeding the blow-up argument as the proof of Theorem \ref{thm2}, we are able to conclude that $\|u_{p_n}\|_{\infty} \leq C$ and $\|\widetilde{u}_{p_n}\|_{\infty}\leq C$.  It then follows that $\|\theta_n\|_{\infty} \leq C$.
Since $u_{p_n} \in H^1_0(B)$ solves the equation
\begin{align} \label{equ32}
-\Delta u_{p_n} + (-\Delta)^s u_{p_n} =-\lambda u_{p_n} + u_{p_n}^{p_n-1} \quad\mbox{in} \,\,\, B, \quad u_{p_n} =0 \quad \mbox{in} \,\,\, \R^N \backslash B.
\end{align}
then $\{u_{p_n}\}$ is bounded in $H^1_0(B)$. Moreover, according to \cite[Lemma 1.3]{SVWZ}, we have that $\|u_{p_n}\|_{C^{1,\alpha}(\overline{B})} \leq C$. As a consequence, there exists $u_{p_*} \in H^1_0(B) \backslash \{0\}$ such that $u_{p_n} \wto u_{p_*}$ in $H^1_0(B)$ and $u_{p_n} \to u_{p_*}$ in $C(\overline{B})$ as $n \to \infty$ solving the equation
\begin{align} \label{equ33}
-\Delta u_{p_*}  + (-\Delta)^s u_{p_*}  =-\lambda u_{p_*} + u_{p_*} ^{p_*-1} \quad\mbox{in} \,\,\, B, \quad u_{p_*} =0 \quad \mbox{in} \,\,\,\R^N \backslash B.
\end{align}
Since $u_{p_n} >0$ in $B$, then $u_{p^*} \geq 0$ in $B$. By the maximum principle, we then obtain that $u_{p^*} >0$ in $B$. 
In addition, since $u_{p_n} \in H^1_0(B)$ is a ground state to \eqref{equ32}, then we derive that $u_{p_*} \in H^1_0(B)$ is a ground state to \eqref{equ33}. Similarly, we get that $\widetilde{u}_{p_n} \wto u_{p_*}$ in $H^1_0(B)$ and $\widetilde{u}_{p_n} \to u_{p_*}$ in $C(\overline{B})$ as $n \to \infty$.
It then shows that $\theta_n (x) \to (p_*-1)u^{p_*-2}_{p_*}(x)$ as $n \to \infty$ for any $x \in B$. Thanks to \cite[Lemma 1.1]{SVWZ}, we have that $\|z_n\|_{\infty} \leq C$. Furthermore, we know that $\{z_n\}$ is bounded in $H_0^1(B)$. It then infers that there exits $z \in H^1_0(B) \backslash \{0\}$ satisfying the equation
$$
-\Delta z + (-\Delta)^s z =-\lambda z + (p_*-1) u_{p_*}^{p_*-2} z  \quad \mbox{in} \,\,\, B, \quad z=0 \quad \mbox{in} \,\,\, \R^N \backslash B.
$$
This is impossible by Theorem \ref{thm2}.  As a result, there exists $0<\eps_*<\delta$ such that, for any $p_*<p<p_*+\eps_*$, there exists a unique solution to $F(p, u)=0$. It clearly contradicts with the definition of $p_*$. It then indicates that $p_*=2^*$. This completes the proof.
\end{proof}

\end{document}